\tikzstyle arrowstyle=[scale=1.5]
\tikzstyle directed=[postaction={decorate,decoration={markings,
  mark=at position 0.48 with {\arrow[arrowstyle]{{To}}}}}]
\definecolor{xcol}{RGB}{59,100,179}
\definecolor{ycol}{RGB}{255,85,85}
\newtheorem{theorem}{Theorem}[section]
\newtheorem{lemma}[theorem]{Lemma}
\newtheorem{corollary}[theorem]{Corollary}
\theoremstyle{definition}
\newtheorem{definition}[theorem]{Definition}
\newtheorem{example}[theorem]{Example}
\newtheorem{question}{Question}
\newcommand{\R}{\mathbb{R}}
\newcommand{\tr}{\textnormal{tr}}
\newcommand{\N}{\mathbb{N}}
\newcommand{\Z}{\mathbb{Z}}
\newcommand{\C}{\mathbb{C}}
\newcommand{\SL}{\mathrm{SL}(2,\mathbb{R})}
\renewcommand{\H}{\mathbb{H}}
\renewcommand{\P}{\mathbb{P}}
\providecommand{\norm}[1]{\lVert#1\rVert}
\newcommand{\A}{\mathcal{A}}
\let\Im\relax
\DeclareMathOperator{\Im}{Im}
\DeclareMathOperator{\trace}{tr}
\renewcommand{\epsilon}{\varepsilon}
\renewcommand{\geq}{\geqslant}
\renewcommand{\leq}{\leqslant}
\renewcommand{\subset}{\subseteq}
\newcommand{\F}{\mathcal{F}}
\newcommand{\HY}{\mathcal{H}}
\newcommand{\EL}{\mathcal{E}}
\newcommand{\SD}{\mathcal{S}}
\newcommand{\LF}{\Lambda^{+}}
\newcommand{\LB}{\Lambda^{-}}
\newcommand{\PSL}{\mathrm{PSL}(2,\mathbb{R})}
\newcommand{\SF}{\langle \F \rangle}
\newcommand{\SDP}{\SD\setminus \overline{H_P}}
\title{Parameter spaces of locally constant cocycles}
\author{Argyrios Christodoulou} \address{School of Mathematical Sciences, Queen Mary University of London, London E1 4NSH}
\email{argyrios.christodoulou@qmul.ac.uk}
\begin{document}

\maketitle

\begin{abstract}
This article concerns the locus of all locally constant $\mathrm{SL}(2,\mathbb{R})$-valued cocycles that are uniformly hyperbolic, called the \emph{hyperbolic locus}. Using the theory of semigroups of M\"obius transformations we introduce a new locus in $\mathrm{SL}(2,\mathbb{R})^N$ which allows us to study the complement of the hyperbolic locus. Our results answer a question of Avila, Bochi and Yoccoz, and Jacques and Short, while motivating a new line of investigation on the subject.
\\ \\
\emph{Mathematics Subject Classification} 2010:  primary: 37D20  secondary: 30F45
\\ \\
\emph{Key words and phrases}: linear cocycles, uniform hyperbolicity, semigroups, M\"obius transformations
\end{abstract}

\section{Introduction}
Let $X$ be a compact metric space and $T\colon X\to X$ a homeomorphism. A \emph{linear cocycle} over $T$ is a bundle map 
\begin{align*}
F\colon X\times\R^d&\to X\times\R^d\\
(x,u)&\mapsto(T(x), \A(x)u),
\end{align*}
where $\A\colon X\to \mathrm{GL}(d,\mathbb{R})$ is a continuous function. Linear cocycles appear naturally in the study of dynamical systems as they provide models for studying various properties of the map $T$. For example, when $X$ is a compact manifold and $T$ is a diffeomorphism, the derivative of $T$ can be studied through a linear cocycle called the \emph{derivative cocycle} which plays a fundamental role in determining the behaviour of the dynamical system $T$ (see, for example, \cite[Section 5]{BaPe2007} or \cite[Section 2.1.2]{Vi2014}). We note a few articles from the broad literature that illustrate the connection of linear cocycles with various aspects of dynamical systems, such as partially hyperbolic \cite{ASVW2013,AvVi2010} and hyperbolic \cite{Ka2011,Vi2008} dynamics, Anosov representations \cite{BoPoSa2019} and Teichm\"uller flows \cite{Fo2006}.

The study of general cocycles can prove quite challenging, even when the function $\A$ is H\"older continuous. Two conditions, commonly found in the literature, that alleviate some of the difficulties of the general case are \emph{fiber-bunching} and \emph{domination}. In crude terms, a cocycle $F$ is fiber-bunched when the fiber maps $F(x,\cdot)$ are close to being conformal, while $F$ is dominated if the vector bundle $X\times\R^d$ can be split into a direct sum of $F$-invariant subbundles such that $F$ expands one subbundle more that the other. The ideas behind these concepts can be traced back to the 1980's, but we refer to \cite{BoGa2019} and \cite{BoGo2009}, and references therein, for a modern analysis of fiber-bunching and domination, respectively. 

In this article we continue the study of a problem regarding dominated cocycles that was considered by Avila, Bochi and Yoccoz \cite{AvBoYo2010} and Yoccoz \cite{Yo2004}: how can we describe the set of all dominated, locally constant $\SL$ cocycles?

So for the rest of this article we restrict ourselves to the following setting:\\
Fix a positive integer $N>1$ and let $\Sigma=N^\Z$ denote the full two-sided shift on $N$ symbols equipped with the shift map $\sigma\colon \Sigma\to\Sigma$. Consider an $\SL$-valued linear cocycle over $\sigma$
\begin{align*}
F\colon \Sigma\times\R^2&\to \Sigma\times\R^2\\
(x,u)&\mapsto(\sigma x, \A(x)u),
\end{align*}
where $\A\colon \Sigma\to \mathrm{SL}(2,\mathbb{R})$ is a continuous function. By writing $\A^n(x)=\A(\sigma^{n-1}x)\A(\sigma^{n-2}x)\cdots \A(x)$, we obtain that $F^n(x,u)=(\sigma^nx,\A^n(x))$ for all $n\in\N$. The cocycle $F$ is called \emph{locally constant} if $\A(x)$ depends only on the letter $x_0$ in position zero of $x=(x_n)\in\Sigma$. Yoccoz \cite[Proposition 2]{Yo2004} showed that a locally constant $\SL$-cocycle is dominated if and only if there exist constants $\lambda>1$ and $C>0$, such that 
\[
\norm{\A^n(x)}\geq C\lambda^n, \quad \text{for all}\ x\in\Sigma\ \text{and all}\ n\geq0.
\]
Dominated locally constant $\SL$-cocycles are often called \emph{uniformly hyperbolic} and we adopt this terminology from now on.

In essence, a locally constant cocycle depends only on a collection of $N$ matrices from $\SL$. So, we can consider the parameter space $\SL^N$ with the natural product topology, where each $N$-tuple gives rise to a locally constant cocycle in the obvious way. The definition of uniform hyperbolicity can now be restated as follows.

\begin{definition}\label{hy}
An $N$-tuple $(A_1,A_2,\dots,A_N)\in\SL^N$ is called \emph{uniformly hyperbolic} if there exist constants $\lambda>1$ and $C>0$, such that 
\[
\norm{A_{i_n}A_{i_{n-1}}\dots A_{i_0}}\geq C\lambda^n, \quad \text{for any sequence}\ (i_n)\subset \{1,2,\dots,N\} \ \text{and all}\ n\in\N.
\]
The collection of all uniformly hyperbolic points in $\SL^N$ is called \emph{the hyperbolic locus}.
\end{definition}

It is interesting to note that $(A_1,A_2,\dots,A_N)\in\SL^N$ is uniformly hyperbolic if and only if the \emph{lower spectral radius} (or \emph{joint spectral subradius}) of the set $\mathbb{A}=\{A_1,A_2,\dots,A_N\}\subseteq\SL$ defined by
\[
\check{\rho}(\mathbb{A})\vcentcolon= \lim_{n\to\infty}\inf \left\{\norm{A_{i_n} A_{i_{n-1}} \dots A_{i_1}}^{1/n} \colon A_{i_j}\in \mathbb{A}\right\},
\]
is strictly greater than one. The lower spectral radius was introduced in \cite{Gu1995}. We refer to \cite{BoMo2015} and \cite{Ju2012} for an interesting exploration of its continuity properties that are related to uniform hyperbolicity, or domination in general.

The hyperbolic locus was introduced by Yoccoz \cite{Yo2004} and further investigated in \cite{AvBoYo2010}. It is easy to see that the hyperbolic locus is open, but not connected. For $N=2$, the authors of \cite{AvBoYo2010} give a full description of the components of the hyperbolic locus, but note that for $N\geq 3$ ``new phenomena appear, which make such a complete description much more difficult and complicated". A set that played an important role their analysis is the following, where we recall that a non-identity matrix $A\in\mathrm{SL}(2,\R)$ is called \emph{elliptic} if $\lvert \trace{A}\rvert<2$, \emph{parabolic} if $\lvert \trace{A}\rvert=2$ and \emph{hyperbolic} if $\lvert \trace{A}\rvert>2$.

\begin{definition}\label{el}
We define $\EL$ to be the set of $N$-tuples $(A_1,A_2,\dots, A_N)$ in $\mathrm{SL}(2,\mathbb{R})^N$ for which the semigroup of matrices generated by $\{A_1,A_2,\cdots,A_N\}$ contains either the identity or an elliptic matrix. The locus $\EL$ is called the \emph{elliptic locus}.
\end{definition}

By definition, the sets $\HY$ and $\EL$ are disjoint. In fact, Yoccoz \cite[Proposition 6]{Yo2004}, with a proof credited to Avila, showed that $\overline{\EL}=\HY^c$ in $\SL^N$. Avila, Bochi and Yoccoz \cite[Theorem 3.3]{AvBoYo2010} show that when $N=2$ the sets $\HY$ and $\EL$ share the same boundary i.e. $\overline{\HY}=\EL^c$, and ask whether the same holds for any positive integer $N$ (see \cite[Question 4]{AvBoYo2010} and \cite[Question 4]{Yo2004}):

\begin{question}\label{Q}
Is it true that $\overline{\HY}={\EL}^c$ for any positive integer $N$?
\end{question}

Our main goal in this article is to show that for any $N\geq3$ the answer to Question~\ref{Q} is negative. This will be carried out in Section~\ref{counter}. For a similar question on more general linear cocycles, see \cite[Question 7.4]{BaKaKo2018}.

We note that the elliptic locus was originally defined in \cite{Yo2004} and \cite{AvBoYo2010} as the set of $N$-tuples whose semigroup contains an elliptic matrix (and not the identity), and Qustion~\ref{Q} was stated with this definition in mind. However, the answer to the original statement of Question~\ref{Q} was shown to be negative by Jacques and Short \cite[Section 16]{JS}. Question~\ref{Q} as we have stated it is a restatement of the questions of Yoccoz and Avila, Bochi and Yoccoz by Jacques and Short (see the last part of \cite[Section 16]{JS}). We also note that not all results from \cite{Yo2004} and \cite{AvBoYo2010} hold for the elliptic locus as defined in Definition~\ref{el}; for example our elliptic locus is no longer an open set. However, one could easily check that the results concerning $\EL$ mentioned above still remain valid.

In order to tackle Question~\ref{Q} we introduce a new parameter locus in $\mathrm{SL}(2,\mathbb{R})^N$. For any $\A=(A_1,A_2,\dots, A_N)\in\mathrm{SL}(2,\mathbb{R})^N$ we define the \emph{semigroup generated by $\A$} to be the semigroup of matrices generated by $\{A_1,A_2,\dots, A_N\}$ and we denote it by $\langle \A \rangle$.

\begin{definition}
We say that $\A\in\mathrm{SL}(2,\mathbb{R})^N$ is \emph{semidiscrete} if the matrices in $\langle \A \rangle$ do not accumulate at the identity. Furthermore, $\A$ is \emph{inverse-free} if $\langle \A \rangle$ does not contain the identity matrix.\\
The set $\SD$ of all semidiscrete and inverse-free $N$-tuples is called \emph{the semidiscrete and inverse-free locus}.
\end{definition}

The semidiscrete and inverse-free properties were introduced by Jacques and Short in \cite{JS}, where they were used in order to study semigroups of isometries of the hyperbolic plane. Observe that an $N$-tuple $\A$ is semidiscrete and inverse-free if and only if $I\notin\overline{\langle \A \rangle}$ where $I$ is the $2\times2$ identity matrix.

By Definition~\ref{hy} all uniformly hyperbolic $N$-tuples are semidiscrete and inverse-free, and so $\HY$ is contained in $\SD$. A great portion of this article is dedicated to studying the interaction between the loci $\HY$ and $\SD$. Our most significant contribution to this matter is the fact that points on the boundary of the hyperbolic locus, apart a few exceptional ones, are semidiscrete and inverse-free. In order to make this precise we require the following definition.

\begin{definition}\label{princ}
A connected component $H$ of the hyperbolic locus is called \emph{principal} if there exists a matrix $B\in\SL$, such that for any $(A_1,A_2,\dots,A_N)\in H$ the matrices $BA_1B^{-1}$, $BA_2B^{-1}$, $\dots$, $BA_NB^{-1}$ are positive.\\
The union of all principal components is denoted by $H_P$.
\end{definition} 

The precise statement of our result is the following.

\begin{theorem}\label{clweak}
The closure of $\HY\setminus H_P$ is contained in $\SD$.
\end{theorem}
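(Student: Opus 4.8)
The plan is to show that a limit point $\A$ of $\HY\setminus H_P$ cannot have semigroup elements arbitrarily close to the identity. First I would dispose of the trivial points: every point of $\HY\setminus H_P$ already lies in $\SD$ because $\HY\subseteq\SD$, so it suffices to treat a genuine limit point $\A\in\overline{\HY\setminus H_P}\setminus\HY$. By the identity $\overline{\EL}=\HY^c$, such an $\A$ lies in $\overline{\EL}$, so its semigroup is approximated by semigroups that contain elliptic elements or the identity; the entire difficulty is to show that nevertheless $I\notin\overline{\langle\A\rangle}$. The idea is to attach to each nearby uniformly hyperbolic tuple an invariant geometric object on the projective line $\P^1$ and to pass it to the limit.

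So I would choose $\A^{(k)}\in\HY\setminus H_P$ with $\A^{(k)}\to\A$. For a uniformly hyperbolic tuple the forward products contract a minimal invariant \emph{multicone} $M_k\subset\P^1$, a finite union of closed arcs with $A^{(k)}_i(M_k)\subseteq\mathrm{int}(M_k)$ for every $i$ \cite{AvBoYo2010}. The key algebraic input is that a single invariant arc occurs only in the principal case: if $M_k$ were one arc then, since $\SL$ preserves the cyclic order of $\P^1$, each generator would map it orientation-preservingly strictly inside itself and hence acquire an attracting fixed point there; conjugating the arc to $[0,\infty]$ then forces every generator to have all entries of one sign, i.e. $\A^{(k)}\in H_P$, contrary to assumption. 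Thus each $M_k$ has at least two components. Moreover, because a non-identity Möbius map has at most one attracting fixed point, no generator can keep two distinct components invariant, and the square argument rules out a generator swapping two components; in the two-component model this forces each generator to send \emph{all} of $M_k$ into a single component. Non-principality then forbids all generators from targeting the same component (that one component would be a single invariant arc), so there are generators $A^{(k)}_a,A^{(k)}_b$ collapsing $M_k$ into two different arcs. Consequently every nonempty word of $\langle\A^{(k)}\rangle$ sends $M_k$ into one of two disjoint arcs, determined by its leftmost letter.

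Next I would pass to the limit. Since $\A^{(k)}\to\A$ inside $\SL^N$, the two target arcs cannot degenerate: some generator maps the whole multicone into each of them, and a convergent sequence in $\SL$ has images converging to a nondegenerate arc (an element of $\SL$ is injective on $\P^1$). Hence, after passing to a subsequence, the two targeted components converge to disjoint nondegenerate closed arcs $J_1,J_2$, the inclusions pass to the limit, and every nonempty word of $\langle\A\rangle$ sends $J_1\cup J_2$ into $J_1$ or into $J_2$. The obstruction is now immediate: if some nonempty word $h\in\langle\A\rangle$ (in particular any candidate accumulating at $I$, or equal to $I$) were close to the identity, then $h$ would move the two arcs only slightly, so $h(J_1\cup J_2)$ would meet both $J_1$ and $J_2$; but $h(J_1\cup J_2)$ is contained in a single $J_i$, and $J_1,J_2$ are disjoint and nondegenerate, a contradiction. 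Therefore no nonempty word lies near $I$ and none equals $I$, that is $I\notin\overline{\langle\A\rangle}$, so $\A\in\SD$.

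The main obstacle I expect is exactly this limit step: controlling how the minimal multicones $M_k$ degenerate. Two dangers must be excluded, namely the separating gaps closing up (which would merge the arcs and collapse the argument to the principal case) and, more seriously, one of the relevant components shrinking to a point. The first is harmless because the two target arcs stay disjoint as long as they remain nondegenerate; the second is ruled out precisely by non-principality together with convergence in $\SL$, since each target arc contains the image of the full multicone under some generator, and those images converge to nondegenerate arcs. Making this uniform, and in the general case reducing an arbitrary non-principal multicone with more than two components to the clean two-arc collapsing picture by grouping components according to which arc eventually traps them, is the technical heart of the proof; the fixed-point count that a near-identity map cannot send a genuine two-arc set into one arc is what makes the whole scheme work.
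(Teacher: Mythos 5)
Your high-level strategy --- extract from each $\A^{(k)}\in\HY\setminus H_P$ a multicone with at least two components, pass this picture to the limit, and observe that a semigroup element near the identity cannot respect a union of two separated nondegenerate arcs each of which absorbs the whole multicone under some word --- is a genuinely different route from the paper's, which never passes multicones to the limit directly but instead controls degeneration through J{\o}rgensen's inequality for semigroups (Theorem~\ref{jo}) combined with limit-set arguments (Lemmas~\ref{ls-inter} and~\ref{nonsd}). The per-$k$ combinatorics is sound in the two-component case: the fixed-point count does force each generator to collapse a two-component multicone into a single component, and non-principality does force two distinct target components.

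There are, however, two genuine gaps, and the second is fatal as written. First, for multicones with three or more components the claim that every word sends $M_k$ into a single component determined by its leftmost letter is false: a generator only induces a self-map of the set of components whose eventual image is one component, and it may send different components into different components (e.g.\ $I_1\mapsto I_1$, $I_2\mapsto I_1$, $I_3\mapsto I_2$), so the ``grouping'' you defer is a real reduction that must actually be carried out. Second, and more seriously, the nondegeneracy of the limit arcs is precisely the content of the theorem, and your justification for it is circular: you argue that each target arc contains $A^{(k)}_a(M_k)$ and that these images converge to nondegenerate arcs because $A^{(k)}_a$ converges in $\SL$ --- but that presupposes that $M_k$ itself does not degenerate, which is what needs to be proved. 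Note that the statement fails if non-principality is dropped (the paper exhibits points of $\partial H_P$ that are not in $\SD$), so a correct proof must use non-principality quantitatively at the limit step; your argument invokes it only qualitatively, separately for each fixed $k$, and nothing in the proposal prevents the two-component structure from collapsing to the one-component, principal structure in the limit. Excluding exactly this collapse is what the paper's Theorem~\ref{pr} and Lemma~\ref{pr1} accomplish, and their proofs lean on the uniform lower bound in J{\o}rgensen's inequality; some substitute for that quantitative input is required to make your scheme close.
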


In fact, we are going to prove that $\SD\setminus H_P$ is closed, which is a slightly stronger statement (see Theorem~\ref{cl}). Moreover, one could easily find examples of $N$-tuples on the boundary of a principal component that are not semidiscrete and inverse-free (see Section~\ref{sdlocus}). Thus the inclusion in the theorem cannon be improved significantly.

Theorem~\ref{clweak} indicates that principal components are special in the sense that they are the only components of $\HY$ whose boundary contains $N$-tuples that are not semidiscrete and inverse-free. All other boundary points of $\HY$---even buried points, if those exist---are semidiscrete and inverse-free. The proof of Theorem~\ref{clweak} and its stronger version rely heavily on the properties of the semigroups generated by semidiscrete and inverse-free $N$-tuples, and will be carried out in Section~\ref{sdlocus}. This section also contains certain results about the points on the boundary of components of $\HY$ outside of $H_P$.

Even though Theorem~\ref{clweak} does not appear to directly relate to Question~\ref{Q}, it plays an important role in proving that our construction in Section~\ref{counter} is indeed a counterexample to Question~\ref{Q}. In the final section of this article we present an updated version of Question~\ref{Q} that arises from the introduction of the semidiscrete and inverse-free locus, as well as two related questions regarding the structure of $\SD$.\\

\noindent \textbf{\large Acknowledgements.} The author  would like to thank Dr Ian Morris, whose suggestions helped improve the presentation of the initial draft. This work was financially supported by the \emph{Leverhulme Trust} (Research Project Grant number RPG-2016-194).

\section{Preliminaries}

In this section we present the main tools of our analysis.

\subsection{A quotient of parameter space}

Observe that if $\A=(A_1,A_2,\dots, A_N)$ is a uniformly hyperbolic $N$-tuple then, by Definition~\ref{hy}, any $N$-tuple of the form $(\pm A_1,\pm A_2,\dots,\pm A_N)$ is also uniformly hyperbolic. Thus, when studying the topological properties of the hyperbolic locus, it is handy to consider the parameter space to be $\PSL^N$ instead of $\SL^N$, where $\PSL$ is the quotient $\SL/\{\pm I\}$.

One can think of $\PSL^N$ as a copy of $\SL^N$ where all the components of $\HY$ that exhibit similar topological and dynamical properties are folded into one component. This makes discussing different components of $\HY$ less cumbersome and we continue to use $\PSL$ instead of $\SL$ for the rest of this article.

\subsection{Action on projective space and uniform hyperbolicity}

One of the main components of our work is the action of the group $\PSL$ on the projective real line $\R\P^1$. The model for $\R\P^1$ that we will be using throughout is the \emph{extended real line}, which is the real line together with the point infinity $\overline{\R}=\R\cup\{\infty\}$. Identifying $\R\P^1$ with $\overline{\R}$ allows us to describe the action of $\PSL$ on projective space through the following map:
\[
\PSL\ni A=\begin{pmatrix}
a & b\\
c & d
\end{pmatrix}
\mapsto
f_A(z)=\frac{az+b}{cz+d}.
\]
So $\PSL$ acts on $\R\P^1=\overline{\R}$ through real M\"obius transformations. In \cite[Theorem~2.2]{AvBoYo2010} the authors gave the following geometric characterisation of uniform hyperbolicity in terms of the projective action of $\PSL$.

\begin{theorem}\label{abyuh}
An $N$-tuple $\A=(A_1,A_2,\dots,A_N)\in\PSL^N$ is uniformly hyperbolic if and only if there exists a finite union $M$ of open intervals in $\overline{\R}$, with disjoint closures, such that each $f_{A_i}$, for $i=1,2,\dots,N$, maps $\overline{M}$ into $M$.
\end{theorem}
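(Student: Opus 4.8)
The plan is to prove both implications, translating between the projective dynamics and the matrix norms via the derivative identity
\[
\bigl| f_A'([v]) \bigr| = \norm{Av}^{-2}
\]
valid for any unit vector $v$ (measuring lengths on $\R\P^1$ with the spherical metric, and writing $[v]$ for the direction of $v$). By the chain rule this telescopes along a word $w = i_n i_{n-1}\cdots i_0$ to $|f_w'([v])| = \norm{A_w v}^{-2}$, where $f_w = f_{A_{i_n}}\circ\cdots\circ f_{A_{i_0}}$ and $A_w = A_{i_n}\cdots A_{i_0}$. Thus strong contraction of the projective maps is exactly strong expansion of the matrix products, and this dictionary underlies the whole argument.

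For the implication ``$M$ exists $\Rightarrow$ uniformly hyperbolic'', I would first argue that the strict inclusions $f_{A_i}(\overline M)\subset M$ force the spherical lengths of the images $f_w(\overline M)$ to decay exponentially, uniformly over all words $w$: since each image is compactly contained in $M$ and there are only finitely many generators, a standard contraction estimate (for instance via a Hilbert-type metric adapted to the finitely many component intervals of $M$, comparable to the spherical metric on compact subsets of $M$) yields a uniform factor $0<c<1$ with $\ell\bigl(f_w(\overline M)\bigr)\leq \text{const}\cdot c^{\,n}$ for $|w|=n$. Because $f_w$ is injective on $\R\P^1$ we have $\ell(f_w(\overline M)) = \int_{\overline M}|f_w'|$, so the mean value of $|f_w'|$ over $\overline M$ is at most $\text{const}\cdot c^n/\ell(M)$; hence there is a direction $[v]\in M$ with $|f_w'([v])|\leq \text{const}\cdot c^n/\ell(M)$. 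By the dictionary above this gives $\norm{A_w v}\geq C\lambda^n$ with $\lambda = c^{-1/2}>1$, and since the operator norm dominates any directional norm we conclude $\norm{A_w}\geq \norm{A_w v}\geq C\lambda^n$ for every word, which is exactly Definition~\ref{hy}. Working with the integrated contraction suffices here, as producing one good direction per word sidesteps any bounded-distortion subtleties.

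For the converse I would build the intervals from the dynamics. Uniform hyperbolicity makes the forward compositions $f_{A_{i_1}}\circ f_{A_{i_2}}\circ\cdots\circ f_{A_{i_n}}$ contract $\R\P^1$ away from a controlled repelling set and converge, as $n\to\infty$, to a single point $u(\underline x)\in\R\P^1$ depending continuously on the one-sided sequence $\underline x = (i_1, i_2,\dots)$; equivalently, $u(\underline x)$ is the expanding direction of the invariant splitting furnished by domination. Writing $\LF$ for the compact closure of $\{u(\underline x)\}$, continuity of each $f_{A_j}$ gives $f_{A_j}(u(\underline x)) = u(j,i_1,i_2,\dots)$, so $\LF$ is forward invariant: $f_{A_j}(\LF)\subset\LF$. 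Moreover domination separates $\LF$ from the analogous contracting set $\LB$, so $\LF\neq\R\P^1$ is a proper compact subset toward which all forward compositions are uniformly attracted. Covering $\LF$ by a sufficiently small neighbourhood, one obtains a finite union $M$ of open intervals with pairwise disjoint closures, and the uniform attraction together with smallness of the neighbourhood upgrades $f_{A_j}(\LF)\subset\LF$ to the required strict inclusion $f_{A_j}(\overline M)\subset M$.

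The main obstacle is this converse direction, where two delicate points arise. First, extracting from the growth condition of Definition~\ref{hy} the continuous, forward-invariant field of expanding directions is the content of the domination theory for one-step $\SL$ cocycles, and is where the real analytic work sits. Second comes the geometric engineering of $M$: since $\LF$ may be a Cantor set, one must choose the collar around it small enough to keep its finitely many components' closures disjoint, yet large enough relative to the uniform attraction rate to retain strict invariance under every generator simultaneously. By contrast the forward implication is comparatively soft once the derivative dictionary and the uniform contraction of $M$ are in place.
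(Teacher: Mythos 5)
First, a point of reference: the paper does not prove this statement at all --- it is quoted verbatim from Avila--Bochi--Yoccoz \cite[Theorem~2.2]{AvBoYo2010}, so your attempt can only be measured against their argument. Your forward implication (``$M$ exists $\Rightarrow$ uniformly hyperbolic'') is essentially correct and is the standard argument: the identity $|f_A'([v])|=\norm{Av}^{-2}$ for unit $v$ is right, the Hilbert/Poincar\'e-metric contraction across the finitely many components (started from the compact set $\bigcup_i f_{A_i}(\overline M)\Subset M$, on which that metric is comparable to the spherical one) does give $\ell(f_w(\overline M))\leq \mathrm{const}\cdot c^{|w|}$, and the mean-value/pigeonhole step producing one direction $v$ with $\norm{A_wv}\geq C\lambda^{|w|}$ legitimately avoids all distortion issues. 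No complaints there.

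The converse, however, contains a genuine gap exactly at the step you wave at: ``the uniform attraction together with smallness of the neighbourhood upgrades $f_{A_j}(\LF)\subset\LF$ to $f_{A_j}(\overline M)\subset M$.'' This does not follow. Forward invariance of the compact set $\LF$ does not transfer to a metric $\epsilon$-neighbourhood under a \emph{single} application of a generator, because a generator need not contract the spherical metric anywhere near $\LF$: only long products along words contract uniformly, while an individual $f_{A_j}$ can have derivative larger than $1$ at points of $\LF$ and so can push points of $N_\epsilon(\LF)$ outside $N_\epsilon(\LF)$, no matter how small $\epsilon$ is. Uniform attraction is an asymptotic statement about words of length $n\to\infty$ and cannot be invoked for one step. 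There is a second, related gap: when $\LF$ is a Cantor set you must decide \emph{which} of its (infinitely many) gaps to fill in order to obtain finitely many intervals with disjoint closures, and this choice cannot be an arbitrary small cover --- the invariant choice is to fill precisely the gaps of $\LF$ that do not meet $\LB$, producing the forward core $C^+$, and one must then \emph{prove} that $C^+$ has finitely many components and is still forward invariant. This is the actual content of \cite[Section~2.4 and Lemmas~2.5--2.9]{AvBoYo2010} (the same cores $C^{\pm}$ that the present paper re-introduces before Lemma~\ref{corelemma} and invokes via \cite[Lemma~2.7]{AvBoYo2010} in the proof of Lemma~\ref{lsuh}), and it is where the real work of the converse lies. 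Your write-up correctly identifies $\LF$, its forward invariance, and its separation from $\LB$, and honestly flags the difficulty, but as it stands the construction of $M$ is asserted rather than proved, and the naive neighbourhood it describes would in general fail to be invariant.
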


Theorem~\ref{abyuh} shows that the uniform exponential growth of the norms that defines uniform hyperbolicity (Definition~\ref{hy}) is equivalent to some form of uniform contraction for the iterated function system on $\R\P^1$ induced by $N$-tuples in $\PSL^N$. Furthermore, it allows us to shift our focus from matrices in $\PSL$ to real M\"obius transformations. That is, we can write $\PSL$ as follows
\[
\PSL=\left\{z\mapsto \frac{az+b}{cz+d}\colon a,b,c,d \in \mathbb{R} \quad \text{and}\quad ad-bc=1\right\}.
\]
Thus, staying consistent with Theorem~\ref{abyuh}, we say that an $N$-tuple $\F=(f_1,f_2,\dots,f_N)$ in $\PSL^N$ is \emph{uniformly hyperbolic} if there exists a finite union $M$ of open intervals in $\overline{\R}$, with disjoint closures, such that each $f_i$, for $i=1,2,\dots,N$, maps $\overline{M}$ into $M$. The set $M$ is called a \emph{multicone} of $\F$.

From now on, and for the rest of this article, we shall use $\F$ to denote an $N$-tuple in $\PSL^N$. We also imbue $\PSL$ with the topology of locally uniform convergence of holomorphic functions and $\PSL^N$ with the induced product topology. The main advantage of this identification for $\PSL$ is that it allows us to utilise the interaction between M\"obius transformations and hyperbolic geometry that we describe in the following section.

\subsection{M\"obius transformations and hyperbolic geometry}

Note that the group $\PSL$ also acts on the upper half-plane $\H=\{z\in\C\colon \Im{z}>0\}$ and so we can consider $\PSL$ acting on $\overline{\H}=\H\cup\overline{\R}$. If we equip the upper half-plane with the hyperbolic distance $\rho$, induced by the Riemannian metric $\frac{\lvert dz \rvert}{\Im{z}}$, then $\PSL$ is exactly the group of isometries of the metric space $(\H,\rho)$. For more information on the hyperbolic geometry of $\H$, we refer to \cite{Be1995,BeMi2007,Ka1992}.

The non-identity elements of $\PSL$ are classified as \emph{elliptic}, \emph{parabolic} or \emph{hyperbolic} depending on whether they have one fixed point in $\mathbb{H}$, one fixed point in $\overline{\mathbb{R}}$, or two fixed points in $\overline{\mathbb{R}}$, respectively.

Suppose that $f$ is a hyperbolic transformation in $\PSL$ and let $f^n\vcentcolon=f\circ f\circ \dots \circ f$ denote the $n$th iterate of $f$. The fixed point $\alpha(f)$ of $f$ such that the sequence $(f^n)$ converges to $\alpha(f)$ locally uniformly in $\H$, is called \emph{the attracting fixed point} of $f$. The other fixed point of $f$ is called the \emph{repelling fixed point}, and will be denoted by $\beta(f)$. Note that since $f$ is an isometry of the hyperbolic metric, it preserves the unique geodesic that joins $\alpha(f)$ and $\beta(f)$. This geodesic is called the \emph{axis} of $f$ and is denoted by $\text{Ax}(f)$.

If $f$ and $g$ are transformations in $\PSL$, then the \emph{commutator} of $f$ and $g$ is defined to be the transformation $[f,g]=f\circ g\circ f^{-1}\circ g^{-1}$. Also, for a transformation $h(z)=(az+b)/(cz+d)$ with $ad-bc=1$, we define the \emph{trace} of $h$ to be the number $\mathrm{tr}(h)= a+d$. The trace of a transformation provides us with the following classification of elements in $\PSL$.\\
Let $h$ be a transformation in $\PSL$. Then $h$ is:
\begin{enumerate}
\item parabolic or the identity if and only if $\lvert\mathrm{tr}(h)\rvert=2$;
\item elliptic if and only if $\lvert\mathrm{tr}(h)\rvert<2$; and
\item hyperbolic if and only if $\lvert\mathrm{tr}(h)\rvert>2$.
\end{enumerate}

In our figures, a hyperbolic transformation will be portrayed by drawing its axis as a directed hyperbolic line pointing towards its attracting fixed point (see Figure~\ref{psl} on the left). A parabolic transformation will be portrayed as a directed Euclidean circle, of arbitrary Euclidean radius, that is tangent to $\mathbb{R}$ at the fixed point of the parabolic transformation. The direction of the circle indicates the action of the transformation on $\mathbb{H}$ (see Figure~\ref{psl} on the right). In addition, it is often convenient to move our figures to the unit disc model of hyperbolic space, and we will do so without explicit mention of this fact since the half-plane and the disc models of hyperbolic space are conformally equivalent.

We end this section with some final pieces of notation. Suppose that $D$ is a subset of $\C$ and $f\colon D\to\C$ is conformal. We say that a set $A\subset D$ is \emph{forward invariant under} $f$ if $f(A)\subseteq A$. Similarly, $A$ is \emph{backward invariant under} $f$ if $f^{-1}(A)\subset A$. Also, $A$ is \emph{mapped compactly inside itself} by $f$ if $\overline{f(A)}\subset A$, while $A$ is \emph{mapped strictly inside itself} by $f$ if $f(A)\subset A$ with $f(A)\neq A$.

\begin{figure}[ht]
\centering
\begin{tikzpicture}
\begin{scope}
\draw (-1.6,0) -- (1.6,0);
\draw[directed] (1,0) arc (0:180:1)node[pos=0.45, above]{$f$};

\node[left] at (-1.6,0) {$\mathbb{R}$};
\node[below] at (-1,0) {$\alpha(f)$};
\node[below] at (1,0) {$\beta(f)$};
\end{scope}

\begin{scope}[xshift=5cm]

\draw (-1.6,0) -- (1.6,0);
\draw[directed] (0,0.5) circle (.5);
\draw[directed, dashed] (0,0.75) circle (.75);
\draw[directed, dashed] (0,1) circle (1);

\node[left] at (-1.6,0) {$\mathbb{R}$};
\node at (0,0.5) {$g$};
\end{scope}

\end{tikzpicture}
\caption{A hyperbolic transformation $f$ and a parabolic transformation $g$.}\label{psl}
\end{figure}
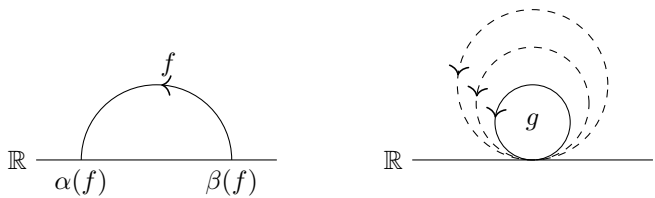

\section{Semigroups of M\"obius transformations}\label{sgsection}

As we mentioned in the introduction, our main techniques involve semigroups generated by a finite collection of real M\"obius transformations. In this section we review the results from this theory that are most relevant to our context.

Recall that we associate a semigroup to each $N$-tuple in $\PSL^N$ in the following way.

\begin{definition}
For $\F=(f_1,f_2,\dots,f_N)$ in $\text{PSL}(2,\mathbb{R})^N$, we define the \emph{semigroup generated by $\F$},  which we will denote by $\langle \F \rangle$, to be the semigroup generated by $\{f_1,f_2,\dots,f_N\}$. 
\end{definition}

Similarly, if $\{f_1,f_2,\dots f_n\}$ is any finite set of M\"obius transformations, we use $\langle f_1,f_2,\dots,f_n\rangle$ to denote the \emph{semigroup} that they generate. Semigroups of M\"obius transformations have been previously studied by Fried, Marotta and Stankiewitz \cite{FrMaSt2012}, as a particular branch of the theory of semigroups of rational functions initiated by Hinkkanen and Martin \cite{HiMa1996}. Our analysis is closely related to the work of Jacques and Short \cite{JS} who further developed the material in \cite{FrMaSt2012} by incorporating well-known results from the theory of Fuchsian groups.

A subset of $\PSL$ is called \emph{discrete} if the topology it inherits from $\PSL$ is the discrete topology. We now recall the definition of the semidiscrete and inverse-free locus from the introduction in light of our shift in focus discussed in the previous section. 

\begin{definition}
An $N$-tuple $\F\in\PSL^N$ is called \emph{semidiscrete} if the identity function is not an accumulation point of $\SF$. We say that $\F$ is \emph{inverse-free} if $\SF$ contains no inverses of its elements.
\end{definition}

We can easily see that $\F$ is semidiscrete and inverse-free if and only if $\mathrm{Id} \notin \overline{\SF}$, where $\mathrm{Id}$ is the identity transformation. Also, recall that $\SD$ is the parameter locus of all semidiscrete and inverse-free $N$-tuples in $\PSL^N$, called the semidiscrete and inverse-free locus.

The semidiscrete and inverse-free properties can be expressed geometrically using the following theorem from \cite[Theorem 7.1]{JS}.

\begin{theorem}[\cite{JS}]\label{JSmain}
An $N$-tuple $\F\in\PSL^N$ is semidiscrete and inverse-free if and only if there is a nontrivial closed subset $X$ of $\overline{\mathbb{H}}$ that is mapped strictly inside itself by each ordinate of $\F$.
\end{theorem}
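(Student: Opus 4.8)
The plan is to prove the two implications separately, using $X=\overline{\SF(w)}$, the closure in $\overline{\H}$ of a single semigroup orbit $\SF(w)=\{g(w):g\in\SF\}$, as the set witnessing the geometric condition. Throughout I would rely on the two elementary facts that $\F$ is semidiscrete and inverse-free precisely when $\mathrm{Id}\notin\overline{\SF}$, and that $\overline{\SF}$ is again a semigroup (composition on $\PSL$ is continuous, so limits of products are products of limits).

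For the easy implication, suppose such a nontrivial closed $X\subseteq\overline{\H}$ exists with $f_i(X)\subsetneq X$ for every ordinate $f_i$ of $\F$; I want $\mathrm{Id}\notin\overline{\SF}$. Arguing by contradiction, take $g_n\in\SF$ with $g_n\to\mathrm{Id}$. Passing to a subsequence I may assume all $g_n$ share the same first letter $f_j$ and have length at least two (the finitely many words equal to a single generator are discarded since $g_n\to\mathrm{Id}\neq f_j$), so $g_n=f_j\circ h_n$ with $h_n\in\SF$; then $h_n=f_j^{-1}\circ g_n\to f_j^{-1}$. Since each $h_n$ maps $X$ into itself, passing to the (locally uniform, hence pointwise on $\overline{\H}$) limit and using that $X$ is closed gives $f_j^{-1}(X)\subseteq X$, i.e. $X\subseteq f_j(X)$, contradicting $f_j(X)\subsetneq X$.

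For the substantive implication I would assume $\mathrm{Id}\notin\overline{\SF}$ and fix any $w\in\H$. The key step is a displacement estimate:
\[
\epsilon_w:=\inf_{g\in\SF}\rho\big(g(w),w\big)>0.
\]
Indeed, if some $g_n\in\SF$ satisfied $\rho(g_n(w),w)\to0$, then, because the isometries displacing $w$ by a bounded amount form a compact subset of $\PSL$, a subsequence of $(g_n)$ would converge to an element $r\in\overline{\SF}$ fixing $w$; such an $r$ is the identity or elliptic, and in either case (irrational rotation, or finite order) the identity would lie in $\overline{\SF}$, a contradiction. With $\epsilon_w>0$ I set $X=\overline{\SF(w)}$. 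Each ordinate satisfies $f_i(\SF(w))\subseteq\SF(w)$, whence $f_i(X)\subseteq X$; moreover $X$ is nonempty and, since every orbit point lies at distance at least $\epsilon_w$ from $w$, we have $w\notin X$, so $X$ is a proper (hence nontrivial) closed subset of $\overline{\H}$. Finally $f_i(w)\in\SF(w)\subseteq X$, while $w\notin X$ forces $f_i(w)\notin f_i(X)$ because $f_i$ is a homeomorphism of $\overline{\H}$; thus $f_i(w)\in X\setminus f_i(X)$, and $f_i(X)\subsetneq X$ is strict.

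The \textbf{main obstacle} is the displacement estimate $\epsilon_w>0$: this is exactly where the full strength of semidiscreteness together with inverse-freeness (encoded as $\mathrm{Id}\notin\overline{\SF}$) enters, and it rests on the observations that $\overline{\SF}$ is a closed semigroup containing no elliptic element and not the identity. A secondary technical point, needed in both directions, is to justify that locally uniform convergence in $\PSL$ yields convergence of the extended maps at the boundary points of $X$ lying on $\overline{\R}$; this follows since elements of $\PSL$ extend to homeomorphisms of $\overline{\H}$ and convergence in $\PSL$ is uniform on the whole boundary circle.
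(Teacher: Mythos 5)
Your argument is correct. Note, however, that the paper does not prove this statement at all: it is quoted verbatim from Jacques and Short \cite{JS}*{Theorem 7.1}, so there is no in-paper proof to compare against. Your proposal supplies a complete, self-contained argument in the standard spirit of \cite{JS}: the forward implication via writing $g_n=f_j\circ h_n$ and passing to the limit $h_n\to f_j^{-1}$ is sound (and correctly disposes of the degenerate case of single-letter words), and the reverse implication correctly reduces to the positivity of the displacement $\inf_{g\in\SF}\rho(g(w),w)$, which you justify by the compactness of $\{g\in\PSL:\rho(g(w),w)\leq M\}$ together with the observation that $\overline{\SF}$ is a closed semigroup, so any elliptic or identity limit would force $\mathrm{Id}\in\overline{\SF}$; the orbit closure $X=\overline{\SF(w)}$ then works, with strictness coming from $f_i(w)\in X\setminus f_i(X)$ because $w\notin X$ and $f_i$ is injective.
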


Comparing Theorems \ref{abyuh} and \ref{JSmain} we can see a geometric interpretation of the fact that the hyperbolic locus $\HY$ is contained in the semidiscrete and inverse-free locus $\SD$. The two loci however are certainly not identical as the $N$-tuples in the following lemma, taken from \cite[Lemma 10.4]{JS}, demonstrate.

\begin{lemma}[\cite{JS}]\label{hump}
Suppose that $\F=(f,g)$, where $f(z)=az+b$ and $g(z)=cz+d$ are two hyperbolic transformations, with $0<c<1<a$ and $b/(1-a)<d/(1-c)$. The pair $\F$ is semidiscrete and inverse-free, and the closure of the semigroup $\SF$ contains all transformations of the form $z+t$, for $t\geq d/(1-c)-b/(1-a)$.
\end{lemma}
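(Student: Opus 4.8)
The plan is to treat $f$ and $g$ as affine maps of $\overline{\R}$ fixing $\infty$ and to read everything off from their finite fixed points. Write $p=b/(1-a)$ and $q=d/(1-c)$, so that $f(z)=a(z-p)+p$ and $g(z)=c(z-q)+q$; the hypotheses say $a>1>c>0$ and $p<q$, and the target minimal length is $t_0\vcentcolon=q-p$. Thus $f$ repels from $p$ towards $\infty$, $g$ attracts towards $q$, and every element of $\SF$ is again affine, of the form $z\mapsto\alpha z+\beta$ with $\alpha>0$; I will write $T_t(z)=z+t$ for a translation.

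For the semidiscrete and inverse-free claim I would invoke Theorem~\ref{JSmain}. Pick any $\ell\in(p,q)$ and set $X=[\ell,\infty]\subset\overline{\R}\subset\overline{\H}$. Since $\ell>p$ we have $f(\ell)=a\ell+b>\ell$, and since $\ell<q$ we have $g(\ell)=c\ell+d>\ell$; as $f$ and $g$ are increasing and fix $\infty$, this gives $f(X)=[f(\ell),\infty]\subsetneq X$ and $g(X)=[g(\ell),\infty]\subsetneq X$. Hence $X$ is a nontrivial closed subset of $\overline{\H}$ mapped strictly inside itself by both ordinates, and Theorem~\ref{JSmain} shows $\F$ is semidiscrete and inverse-free.

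For the translations I would first show $T_{t_0}\in\overline{\SF}$. A direct computation gives
\[
g^m f^n(z)=c^m a^n\,(z-p)+c^m(p-q)+q ,
\]
so along any sequence with $m,n\to\infty$ and $c^m a^n\to1$ (these exist because $a>1>c$) the multiplier tends to $1$, $c^m\to0$, and the map converges locally uniformly to $z\mapsto z+(q-p)=T_{t_0}$. Thus $t_0$ lies in $S\vcentcolon=\{t>0:T_t\in\overline{\SF}\}$. Since composition is continuous and $T_sT_t=T_{s+t}$, the set $S$ is a closed additive sub-semigroup of $(0,\infty)$, so it suffices to prove $S$ is dense in $[t_0,\infty)$, closedness of $\overline{\SF}$ then yielding $[t_0,\infty)\subset S$. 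To produce many lengths I would record a formula for the translation part of a word: letting $P_0=1,P_1,\dots,P_L=\alpha_w$ be the successive partial products of the multipliers read from the left (multiply by $a$ at an occurrence of $f$, by $c$ at an occurrence of $g$), one checks $\beta_w=-\sum_i r_i(P_i-P_{i-1})$, where $r_i=p$ on up-steps and $r_i=q$ on down-steps. Writing $U$ and $D$ for the total up- and down-movements (so $U+D=\alpha_w-1$ and $\beta_w=-pU-qD$), it follows that along any sequence of words with $\alpha_w\to1$ and $U\to V$ one has $\beta_w\to V(q-p)=Vt_0$. The word $g^m f^n$ realises $V\to1$; to realise an arbitrary $V\ge1$ I would start from this excursion and insert additional short up-steps performed at very low levels, each contributing a tunable amount $\rho(a-1)$ to $U$ while being balanced by compensating down-steps so that $\alpha_w\to1$ is preserved. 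This makes $S/t_0$ dense in $[1,\infty)$, completing the argument.

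The main obstacle is precisely this last step: passing from the single limiting translation $T_{t_0}$ (and its integer multiples $T_{kt_0}$, obtained by iterating $T_{t_0}$ in the closed semigroup $\overline{\SF}$) to the whole continuum $[t_0,\infty)$. The difficulty is that the accessible scales $a^ic^j$ are discrete, and when $\log a/\log c$ is rational they form a geometric progression rather than a dense subset of $(0,\infty)$; one then has to verify that finite sums of the available up-movements are nevertheless dense in $[0,\infty)$, so that every target length in $[t_0,\infty)$ can be approximated and $\beta_w\to t$ achieved.
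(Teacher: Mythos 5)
The paper does not actually prove this lemma: it is imported verbatim from \cite{JS}*{Lemma 10.4}, so there is no internal proof to compare against; the closest relative in the text is the proof of Lemma~\ref{limitset}, which runs the same $g^m\circ f^n$ computation and the same rational/irrational dichotomy. Measured on its own terms, your proposal is correct and complete for the first two claims: $[\ell,\infty]$ with $\ell\in(p,q)$ is indeed a nontrivial closed subset of $\overline{\H}$ mapped strictly inside itself by both ordinates, so Theorem~\ref{JSmain} applies; and the computation of $g^m\circ f^n$ along a sequence with $c^ma^n\to1$ and $c^m\to0$ correctly yields $T_{t_0}\in\overline{\SF}$. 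The bookkeeping identity $\beta_w=-pU-qD$ with $U+D=\alpha_w-1$ is also right, and it correctly reduces the problem to realising every $V\geq1$ as a limit of total up-movements of words with $\alpha_w\to1$.

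The gap is the one you flag yourself, and it is genuine: as written, the final step relies on ``tunable'' increments $\rho(a-1)$, but the accessible levels $\rho=a^ic^j$ are only dense in $(0,\infty)$ when $\log a/\log c\notin\Q$; in the rational case they form a geometric progression and no single increment is tunable, so the argument as stated does not close. The step is bridgeable, but it needs the following additional idea (which is essentially the device $g^{2n}\circ H_n^l\circ f^n$, $H_n=g^n\circ f^n$, in the paper's proof of Lemma~\ref{limitset}): what must be made dense is not the set of individual increments but the set of their finite sums, and an arithmetic progression with arbitrarily small gap suffices for that. Concretely, reduce to $ac=1$ by passing to the sub-pair $(f^s,g^r)$ with $a^sc^r=1$, which has the same fixed points and hence the same $t_0$. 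Any word with equal numbers of $f$'s and $g$'s is then an \emph{exact} translation by $t_0U$, and the word $g^{-(j_0+1)}\circ(g\circ f)^{N}\circ g^{(j_0+1)-m}\circ f^{-m}$, for integers $m\leq j_0\leq-1$ and $N\geq0$, has $U=1-a^{m}+N(a-1)a^{j_0}$. Letting $m,j_0\to-\infty$ these values form arithmetic progressions whose first terms tend to $1$ and whose gaps $(a-1)a^{j_0}$ tend to $0$, so the closure of the achievable $U$ contains $[1,\infty)$. In the irrational case your two-block word $w_1\circ w_2$ with $\alpha_{w_1}\approx\sigma$, $\alpha_{w_2}\approx\sigma^{-1}$ and both $c^{m_i}\approx0$ already gives $V\approx1+\sigma$ for $\sigma$ in a dense subset of $(0,\infty)$. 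With these two cases written out, your outline becomes a complete proof.
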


Observe that the pair $\F$ in Lemma \ref{hump} consists of two hyperbolic transformations with a common fixed point, which is attracting for one transformation and repelling for the other. Hence, $\F$ is not unifomrly hyperbolic, but $\F\in\SD$. Also, observe that this pair $\F$ illustrates that the discrete and semidiscrete properties are not equivalent for semigroups of M\"obius transformations, even though they are equivalent for groups (see, for example, \cite[page 14]{Be1995} or \cite[page 26]{Ka1992}).

We now introduce a class of semidiscrete and inverse-free $N$-tuples which exhibit ``simple" geometric properties, that will prove important to our analysis. 

\begin{definition}\label{sch}
Let $\F\in\SD$. If there exists a union $C$ of $m$ open intervals in $\overline{\mathbb{R}}$, with disjoint closures, such that each ordinate of $\mathcal{F}$ maps $C$ strictly inside itself, then $\F$ is said to be of \emph{finite rank}.
\end{definition}
Suppose that the integer $m$ in Definition~\ref{sch} is the smallest integer with this property, in the sense that if there exists another union $D$ of $n$ open intervals in $\overline{\mathbb{R}}$, with disjoint closures, such that each ordinate of $\F$ maps $D$ strictly inside itself, then $m\leq n$. We then say that $\F$ has \emph{rank $m$}.

By Theorem~\ref{JSmain} every $N$-tuple with finite rank is semidiscrete and inverse-free. Furthermore, every uniformly hyperbolic $\F$ has finite rank. We note that in the language of \cite{JS}, the semigroups generated by $N$-tuples of finite rank are called \emph{Schottky semigroups} due to their connection with the well-known class of discrete groups of the same name. It is important to note that not every semidiscrete and inverse-free $N$-tuple has finite rank as the example in \cite[Section 7]{JS} indicates.

In accordance to the theory of Fuchsian groups, we make the following definition.

\begin{definition}
An $N$-tuple $\F\in \PSL^N$ is called \emph{elementary} if the semigroup $\SF$ has a finite orbit in $\overline{\H}$.
\end{definition}

It is easy to check that $\F$ is elementary if and only either all ordinates of $\F$ have a common fixed point in $\overline{\H}$, or there exists a pair of points in $\overline{\R}$ that is fixed setwise by each ordinate. If $\F$ is not elementary we say that it is \emph{non-elementary}.

A key object in the theory of semigroups (or even groups) of M\"obius transformations is the notion of the limit set, which we now introduce.

\begin{definition}
Let $\F\in\PSL^N$. We define the \emph{forward limit set} $\LF(\F)$ of the semigroup $\SF$ to be the set accumulation points of $\{f(z_0) \colon f\in \SF\}$ in $\overline{\mathbb{R}}$, where $z_0\in\mathbb{H}$, with respect to the chordal metric in $\overline{\mathbb{C}}$. Similarly, the \emph{backward limit set} $\LB(\F)$ of $\SF$ is defined to be the forward limit set of $\SF^{-1}\vcentcolon=\{f^{-1}\colon f\in \SF\}$.
\end{definition}

It is a simple exercise to verify that the limit sets are independent of the choice of $z_0$, and so we can choose $z_0$ to be the complex number $i$. Moreover, $\LF(\F)$ is forward invariant under transformations in $\SF$ whereas $\LB(\F)$ is backward invariant. Fried, Marotta and Stankewitz \cite[Theorem 2.4, Proposition 2.6, and Remark 2.20]{FrMaSt2012} showed that whenever $\SF$ contains hyperbolic transformations, $\LB(\F)$ is the closure of all the repelling fixed points of hyperbolic  elements of $\SF$. Equivalently, if $\SF$ contains hyperbolic transformations, then $\LF(\F)$ is the closure of all the attracting fixed points of hyperbolic transformations in $\SF$. The version of this result that we will be using is stated in following, where the key idea for the proof is borrowed from \cite[Theorem 5.1.3]{Be1995}.

\begin{lemma}\label{nonel}
If $\F\in\PSL^N$ is non-elementary, then $\LF(\F)$ is the closure of all the attracting fixed points of hyperbolic transformations in $\SF$.
\end{lemma}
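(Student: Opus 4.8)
The plan is to prove the two inclusions $E\subseteq\LF(\F)$ and $\LF(\F)\subseteq\overline{E}$ separately, where I write $E$ for the set of attracting fixed points of hyperbolic transformations in $\SF$ and $\overline{E}$ for its closure. The first inclusion is immediate: if $h\in\SF$ is hyperbolic then $h^n\in\SF$ for every $n$ and $h^n(z_0)\to\alpha(h)$ locally uniformly, so $\alpha(h)$ is an accumulation point of the orbit $\{f(z_0):f\in\SF\}$ and therefore lies in $\LF(\F)$. Since $\LF(\F)$ is closed by definition, this gives $\overline{E}\subseteq\LF(\F)$, and in particular the right-hand side is a closed set.

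Before the reverse inclusion I would record a structural fact: since $\F$ is non-elementary, $\SF$ has neither a common fixed point in $\overline{\H}$ nor a setwise-invariant pair in $\overline{\R}$, and from this one deduces that $\SF$ contains hyperbolic transformations, in fact with infinitely many distinct fixed points. (A semigroup all of whose elements are elliptic, parabolic, or the identity is elementary; e.g.\ suitable products of two parabolics, or of an elliptic and a parabolic, with distinct fixed points are hyperbolic, and the trace classification forces the fixed-point set to be infinite once the configuration is non-elementary.) The consequence I need is the freedom to choose a hyperbolic $h\in\SF$ whose fixed points $\alpha(h),\beta(h)$ avoid any prescribed finite set of points in $\overline{\R}$.

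For the inclusion $\LF(\F)\subseteq\overline{E}$, fix $\xi\in\LF(\F)$ and a sequence $f_n\in\SF$ with $f_n(z_0)\to\xi$. Passing to a subsequence, I invoke the contraction property of M\"obius maps adapted from Beardon's Theorem~5.1.3: because $f_n(z_0)\to\xi\in\overline{\R}$ the displacements $\rho(z_0,f_n(z_0))$ tend to infinity, and there is a point $b\in\overline{\R}$ such that $f_n\to\xi$ locally uniformly on $\overline{\H}\setminus\{b\}$. Now choose, using the previous paragraph, a hyperbolic $h\in\SF$ with $p\vcentcolon=\alpha(h)\neq b$ and $q\vcentcolon=\beta(h)\neq\xi$, and fix small neighbourhoods $U_q\ni q$ and $V_p\ni p$ in $\overline{\R}$ with $b\notin\overline{V_p}$. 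For $k$ large, $h^k$ maps the arc $\overline{\R}\setminus U_q$ into $V_p$; since $\overline{V_p}$ is a compact subset of $\overline{\H}\setminus\{b\}$, the images $f_n(V_p)$ lie in an arbitrarily small neighbourhood $W_\xi$ of $\xi$ once $n$ is large, and as $\xi\neq q$ we may shrink so that $\overline{W_\xi}\cap\overline{U_q}=\emptyset$. Hence $g_{n,k}\vcentcolon=f_n\circ h^k\in\SF$ maps $\overline{\R}\setminus U_q$ into $W_\xi$, that is, strictly and compactly inside itself. A M\"obius transformation that maps an arc compactly into its own interior is hyperbolic with attracting fixed point in the image, so $g_{n,k}$ is hyperbolic and $\alpha(g_{n,k})\in W_\xi$. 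Letting $n\to\infty$ with $k=k(n)\to\infty$ produces attracting fixed points of hyperbolic elements converging to $\xi$, whence $\xi\in\overline{E}$.

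The main obstacle is the convergence step in the third paragraph: one must show that an orbit escaping to the boundary forces the maps $f_n$ to exhibit north--south-type dynamics, contracting the complement of a single point $b$ toward $\xi$, and one must combine this with the distortion control ``$f_n(\overline{V_p})$ is eventually small''; together with the auxiliary existence of hyperbolic elements with a rich fixed-point set, this is exactly the place where Beardon's argument is transplanted to the semigroup setting. As an alternative route, once it is known that the non-elementary semigroup $\SF$ contains a hyperbolic transformation, the inclusion $\LF(\F)\subseteq\overline{E}$ may instead be quoted directly from the results of Fried, Marotta and Stankewitz.
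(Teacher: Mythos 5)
Your overall architecture is viable, and your fallback route---first establish that a non-elementary $\SF$ contains a hyperbolic transformation, then quote Fried, Marotta and Stankewitz for the identification of $\LF(\F)$ with the closure of the attracting fixed points---is in fact exactly the paper's proof. The easy inclusion is handled the same way in both, and your direct contraction argument (the convergence property of Möbius sequences giving $f_n\to\xi$ off a single point $b$, followed by the ping-pong with $f_n\circ h^k$ mapping an arc compactly inside itself) is a legitimate, if more labour-intensive, alternative for the reverse inclusion; it is precisely the Beardon Theorem~5.1.3 argument that the paper alludes to but does not carry out.

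The genuine gap is in the step you relegate to a parenthesis: that a non-elementary semigroup must contain a hyperbolic element. This is where the paper spends its entire proof, and your sketch does not cover the hardest case, namely a semigroup consisting entirely of elliptic transformations (and possibly the identity). Your examples---products of two parabolics, or of a parabolic with another element---do yield hyperbolic elements via the trace computation $\mathrm{tr}(p^n\circ f)=a+nc+d$, but if every element of $\SF$ is elliptic there is no parabolic to play against, and the product of two elliptic transformations with distinct fixed points in $\H$ can perfectly well be elliptic again, so no such computation rules out this configuration. The paper disposes of it in two sub-cases: if $\SF$ contains an elliptic of infinite order and a non-commuting elliptic, then by B\'ar\'any--Beardon--Carne the semigroup they generate is dense in $\PSL$ and hence meets the open set of hyperbolic transformations, a contradiction; if instead every element has finite order, then $\SF$ contains all inverses, is a purely elliptic group, and so has a common fixed point in $\H$ by Beardon's Theorem~4.3.7, contradicting non-elementarity. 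Without an argument of this kind your structural fact---on which both of your routes to the inclusion $\LF(\F)\subseteq\overline{E}$ depend---is unsupported. (The further assertion that the hyperbolic elements have infinitely many distinct fixed points, which your direct argument needs in order to choose $h$ with $\alpha(h)\neq b$ and $\beta(h)\neq\xi$, also deserves a line of justification, e.g.\ by considering $h^m\circ g\circ h^m$ for suitable $g\in\SF$, though this is routine once a single hyperbolic element and non-elementarity are in hand.)
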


\begin{proof}
Due to the results of Fried, Marotta and Stankewitz we mentioned earlier, it suffices to show that if $\F$ is non-elementary then $\SF$ contains hyperbolic transformations. We first claim that if $\SF$ contains only elliptic transformations and $\mathrm{Id}$, then all ordinates of $\F$ have a common fixed point in $\H$.\\
Let us assume that $\SF$ contains only elliptic transformations and $\mathrm{Id}$. Suppose that $\SF$ contains an elliptic transformation $f$ of infinite order. If there exists another elliptic transformation $g$ that does not have the same fixed point as $f$, then $f$ and $g$ do not commute. Hence, \cite[Theorem~3]{BaBeCa1996} is applicable and implies that the semigroup generated by $f$ and $g$ is a dense subset of $\PSL$. Thus the semigroup $\langle f,g \rangle$, which is contained in $\SF$, contains non-elliptic transformations, a contradiction. We conclude that if $\SF$ contains an elliptic transformation of infinite order then all transformations in $\SF$ have a common fixed point in $\mathbb{H}$.\\
Suppose now that $\SF$ contains only elliptic transformations of finite order. Then $\SF$ contains all inverses of its elements, and thus is a purely elliptic group. The result in this case follows from \cite[Theorem~4.3.7]{Be1995}, which states that a subgroup $G$ of $\PSL$ is purely elliptic if and only if every element of $G$ fixes the same point in $\mathbb{H}$. This concludes the proof of our claim.\\
Assume, towards a contradiction, that $\SF$ does not contain any hyperbolic transformations. Our claim then implies that $\SF$ has to contain a parabolic transformation $p$. We can then conjugate $\SF$ by a M\"obius transformation so that $p(z)=z+1$. Let 
\[
f(z)=\frac{az+b}{cz+d},
\]
be a transformation in $\SF$. Then the transformation 
\[
p^n\circ f(z)=\frac{(a+nc)z+b+nd}{cz+d},
\]
has to satisfy $\lvert\tr(p^nf)\rvert\leq 2$, for all $n\in\N$. This implies that $c=0$ and so every transformation in $\SF$ fixes the point at infinity, which is a contradiction.
\end{proof}

As an example, we will evaluate the limit sets of the pair introduced in Lemma~\ref{hump}.

\begin{lemma}\label{limitset}
Suppose that $\F=(f,g)$, where $f(z)=az$ and $g(z)=cz+d$ with $0<c<1<a$ and $0<d/(1-c)$. The forward limit set of $\SF$ is the closed interval $[d/(1-c),\infty]$.
\end{lemma}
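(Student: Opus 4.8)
The plan is to establish the two inclusions $\LF(\F)\subseteq[d/(1-c),\infty]$ and $[d/(1-c),\infty]\subseteq \LF(\F)$ separately. Throughout I write $p=d/(1-c)$ and record the elementary features of the configuration: $p$ is the attracting fixed point of $g$ (with $g(z)-p=c(z-p)$), while $\infty$ is the attracting fixed point of $f$; moreover every element of $\SF$ is an increasing affine map $z\mapsto \alpha z+\beta$ with $\alpha>0$, so all of $\SF$ fixes $\infty$.

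For the upper bound I would exploit that the forward limit set does not depend on the base point $z_0\in\H$, so I may compute it using $z_0=p+i$ in place of $i$. The key observation is that the half-plane $R=\{z\in\H:\Re z\geq p\}$ is forward invariant under both generators: if $\Re z\geq p$ then $\Re(f(z))=a\,\Re z\geq p$ and $\Re(g(z))=c\,\Re z+d\geq cp+d=p$, and both maps preserve $\H$. Hence the whole orbit $\{h(p+i):h\in\SF\}$ lies in $R$, so all of its accumulation points in $\overline{\R}$ lie in $\overline{R}\cap\overline{\R}=[p,\infty]$. This yields $\LF(\F)\subseteq[p,\infty]$ with essentially no computation.

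For the lower bound I would first note $p\in\LF(\F)$, since $g^n(i)=p(1-c^n)+c^n i\to p$. The crux is to produce every interior point, and here I would invoke Lemma~\ref{hump}: as $b=0$, the closure $\overline{\SF}$ contains every translation $T_t\colon z\mapsto z+t$ with $t\geq p$. Writing $T_t=\lim_k h_k$ with $h_k\in\SF$ and using that $\LF(\F)$ is forward invariant (so $h_k(p)\in\LF(\F)$), together with $h_k(p)\to T_t(p)=p+t$ and the closedness of $\LF(\F)$, I obtain $p+t\in\LF(\F)$ for all $t\geq p$; that is, $[2p,\infty)\subseteq\LF(\F)$. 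To close the remaining gap $(p,2p)$ I would push this ray inward with $g$: forward invariance gives $g^n([2p,\infty))=[p(1+c^n),\infty)\subseteq\LF(\F)$, and since $p(1+c^n)\downarrow p$ the union of these nested rays is exactly $(p,\infty)$. As $\LF(\F)$ is closed this gives $[p,\infty]\subseteq\LF(\F)$, completing the argument.

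The main obstacle is precisely the realization of the interior points in the lower bound: a direct attempt to reach an arbitrary $x\in(p,\infty)$ by evaluating explicit words such as $f^m g^n$ only produces a sparse, roughly multiplicatively spaced family of limit points and leaves gaps. The insight that dissolves this difficulty is to import the translations supplied by Lemma~\ref{hump}, which hand over the entire ray $[2p,\infty)$ at once, after which the contraction of $g$ toward $p$ sweeps out the rest of the interval. The one point requiring care is the justification that convergence $h_k\to T_t$ in $\PSL$ forces $h_k(p)\to T_t(p)$; this holds because $T_t$ has its only pole at $\infty$, away from the finite point $p$, so the convergence is uniform near $p$, and the verification is routine.
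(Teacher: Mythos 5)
Your argument is correct, but the route to the harder inclusion $[d/(1-c),\infty]\subseteq\LF(\F)$ is genuinely different from the paper's. The paper works directly with the explicit words $g^m\circ f^n(z)=a^nc^mz+\tfrac{d}{1-c}(1-c^m)$ and splits into two cases according to whether some power $a^{\mu}c^{\nu}$ equals $1$: in the generic case it approximates an arbitrary contraction $z\mapsto\lambda z+\tfrac{d}{1-c}$, $0<\lambda<1$, by such words and reads off that the attracting fixed points $\tfrac{d}{(1-c)(1-\lambda)}$ fill out $(d/(1-c),\infty)$; in the resonant case it builds a separate explicit family of hyperbolic elements $g^{2n}\circ H_n^{\,l}\circ f^n$ whose fixed points are dense in the ray. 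Your proof sidesteps both the case analysis and the density-of-$\{n\log a-m\log c\}$ computation by importing the translations supplied by Lemma~\ref{hump}, using forward invariance and closedness of $\LF(\F)$ to place the ray $[2p,\infty)$ in the limit set, and then sweeping that ray down to $p$ with the contraction $g$. This is shorter and arguably cleaner, at the price of leaning on Lemma~\ref{hump} (an external result of Jacques and Short) rather than being self-contained; since the paper states that lemma immediately beforehand and frames the present lemma as computing the limit set of that very pair, the dependence is legitimate and non-circular. Your upper bound, via the forward-invariant half-plane $\{\Re z\geq p\}$ together with base-point independence, is also slightly more elementary than the paper's, which instead quotes the Fried--Marotta--Stankewitz description of $\LF(\F)$ as the closure of the attracting fixed points of hyperbolic elements. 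The one step deserving the care you gave it is the passage from $h_k\to T_t$ in $\PSL$ to $h_k(p)\to p+t$; this upgrade from locally uniform convergence on $\H$ to convergence at the boundary point $p$ is standard for M\"obius maps and your justification is adequate.
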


\begin{proof}
Note that the interval $[d/(1-c),\infty]$ is mapped into itself by $f$ and $g$ and so it has to contain all the attracting fixed points of hyperbolic elements of $\SF$. Thus $\LF(\F)\subseteq [d/(1-c),\infty]$. Also, observe that
\begin{equation}\label{gf}
g^m\circ f^n(z)=a^nc^mz+\frac{d}{1-c}(1-c^m),
\end{equation}
for any $m,n\in\mathbb{N}$. Suppose, first, that $a^nc^m\neq1$, for all positive integers $m,n$ and let $0<\lambda<1$. Then for every $\epsilon>0$, there exist $m_0$ and $n_0$ such that $|a^{n_0}c^{m_0}-\lambda|<\epsilon$ and $\lvert  \tfrac{d}{1-c}c^{m_0}\rvert<\epsilon$. Therefore, 
\[
\left|g^{m_0}\circ f^{n_0}(z)-\left(\lambda z+\frac{d}{1-c}\right)\right|<\epsilon(\lvert z\rvert +1),
\]
for all $z\in\mathbb{H}$. Observe that the finite fixed point of the transformation $\lambda z+d/(1-c)$ is $\tfrac{d}{(1-c)(1-\lambda)}$, and it is an attracting fixed point because $\lambda<1$. We deduce that in this case, $\tfrac{d}{(1-c)(1-\lambda)}$ lies in $\LF(\F)$, for all $0<\lambda<1$, which yields the desired result.\\
If on the other hand, $a^{\mu}b^{\nu}=1$ for some $\mu,\nu\in\mathbb{N}$ then, by considering the subsemigroup $\langle f^{\nu}, g^{\mu} \rangle$ of $\SF$, we can assume that $ac=1$. So, from \eqref{gf} 
\[
H_n(z)=g^n\circ f^n(z)=z+\frac{d}{1-c}(1-c^n),
\]
for all $n\in\mathbb{N}$. So for all non-negative integers $n,l$, the transformation
\[
g^{2n}\circ {H_n}^l\circ f^n(z)=c^nz+l\frac{d}{1-c}(1-c^n)c^{2n}+\frac{d}{1-c}(1-c^{2n}),
\]
is a hyperbolic transformation in $\SF$, whose attracting fixed point is
\[
\frac{d}{1-c}(lc^{2n}+c^n+1).
\]
For every real number $x>0$, and every $\epsilon>0$, we can choose positive integers $l,n$, so that $\lvert lc^{2n}-x\rvert<\tfrac{\epsilon}{2}$ and $c^{n}<\tfrac{\epsilon}{2}$. This implies that for every $r>1$, and all $\epsilon>0$, we can find $l,n$ such that $\lvert lc^{2n}+c^n+1- r\rvert<\epsilon$. Hence, the point $r\, d/(1-c)$ lies in $\LF(\F)$ for all $r>1$.
\end{proof}

Observe that $\F$ is inverse-free if and only if $\SF\cap \SF^{-1}=\emptyset$. It is also interesting to note that $\SF \cap \SF^{-1}$, if non-empty, is a group whose limit set lies in the intersection $\LF(\F)\cap\LB(\F)$. So, one might expect that there is a connection between the size of the intersection of the limit sets and the size of $\SF\cap \SF^{-1}$. Obviously, if $\SF$ is a group then the backward and forward limits sets coincide. For non-elementary $N$-tuples the following theorem \cite[Theorem 1.9]{JS} shows that the other direction of this statement is also true.

\begin{theorem}[\cite{JS}]\label{JSgroup}
Let $\F$ be a non-elementary and semidiscrete $N$-tuple. If $\LB(\F)\subseteq \LF(\F)$, then $\SF$ is a group.
\end{theorem}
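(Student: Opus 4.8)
The plan is to prove directly that $\SF$ is closed under inverses, which for a subsemigroup of a group is equivalent to being a subgroup. First I would record the geometric content of the hypotheses: by Lemma~\ref{nonel} and the Fried--Marotta--Stankewitz results, non-elementarity guarantees that $\SF$ contains hyperbolic transformations, that $\LF(\F)$ is the closure of their attracting fixed points, and that $\LB(\F)$ is the closure of their repelling fixed points. Thus $\LB(\F)\subseteq\LF(\F)$ says exactly that every repelling fixed point of a hyperbolic element of $\SF$ is a limit of attracting fixed points of hyperbolic elements. I would then reduce to the hyperbolic case: assuming $h^{-1}\in\SF$ for every hyperbolic $h\in\SF$, given an arbitrary $f\in\SF$ non-elementarity lets me pick a hyperbolic $h\in\SF$ with $f\circ h$ hyperbolic (e.g. a high power of a fixed hyperbolic element makes the trace of the product large), so that $(f\circ h)^{-1}\in\SF$ and hence $f^{-1}=h\circ(f\circ h)^{-1}\in\SF$. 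This gives $\SF^{-1}\subseteq\SF$ and the group property.

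The second, and cleanest, reduction uses semidiscreteness to pass from the closure to the semigroup itself: it suffices to prove $h^{-1}\in\overline{\SF}$ for every hyperbolic $h\in\SF$. Indeed, if $w_k\to h^{-1}$ with $w_k\in\SF$, then $w_k\circ h\in\SF$ and $w_k\circ h\to\mathrm{Id}$; since $\F$ is semidiscrete, $\mathrm{Id}$ is not an accumulation point of $\SF$, so $w_k\circ h=\mathrm{Id}$ for all large $k$, whence $h^{-1}=w_k\in\SF$. This is where semidiscreteness enters decisively.

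The heart of the argument is then to produce, for a hyperbolic $h\in\SF$, a sequence in $\SF$ converging to $h^{-1}$. Normalising so that $h(z)=\lambda^2 z$ with $\lambda>1$, the map $h^{-1}$ has attracting fixed point $\beta(h)=0$, repelling fixed point $\alpha(h)=\infty$, and multiplier $\lambda^{-2}$. The hypothesis immediately supplies hyperbolic $g_k\in\SF$ with $\alpha(g_k)\to 0$, i.e. elements attracting towards the attracting point of $h^{-1}$. The idea is to build words $w_k=a_k\circ g_k^{m_k}\circ b_k$ with $a_k,b_k\in\SF$, driving the attracting fixed point to $0$ and the repelling fixed point to $\infty$ while keeping the translation length convergent to that of $h$; the freedom in choosing $a_k,b_k$ is provided by a transitivity (topological mixing) property of the forward action of $\SF$ on $\LF(\F)$, which I expect to extract from the density of attracting fixed points together with the contraction dynamics of hyperbolic elements.

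The main obstacle is exactly this construction, and it carries two intertwined difficulties. First, placing the repelling fixed point of the approximants near $\alpha(h)=\infty$ forces $\infty\in\LB(\F)$, so one must in fact upgrade the inclusion $\LB(\F)\subseteq\LF(\F)$ to the equality $\LF(\F)=\LB(\F)$; here the natural invariance computation, using forward-invariance and minimality of $\LF(\F)$ and backward-invariance of $\LB(\F)$, yields only that the forward images $\{f(\LB(\F)):f\in\SF\}$ are dense in $\LF(\F)$, and promoting this to the reverse inclusion is the delicate point (and is essentially where the semigroup, as opposed to group, structure fights back). Second, even with both fixed points controlled, one must force the multiplier of the approximants to converge to the finite value $\lambda^{-2}$ rather than degenerate to a rank-one constant map as the powers $m_k$ grow; achieving simultaneous control of both endpoints and the translation length is where the careful hyperbolic-geometry estimates of Jacques and Short are required.
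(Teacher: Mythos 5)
The paper does not actually prove this statement: it is imported verbatim from Jacques and Short \cite{JS} (their Theorem~1.9), so there is no internal proof to measure your attempt against, and it has to stand on its own.

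Your outer scaffolding is correct. A nonempty subsemigroup of a group that is closed under inversion is a subgroup; the reduction from an arbitrary $f\in\SF$ to hyperbolic elements via a product $f\circ h^{n}$ of large trace is legitimate (non-elementarity supplies enough hyperbolic elements to avoid the degenerate case where the relevant matrix entry vanishes); and the semidiscreteness upgrade --- if $w_k\to h^{-1}$ with $w_k\in\SF$ then $w_k\circ h\to\mathrm{Id}$, hence $w_k\circ h=\mathrm{Id}$ for large $k$ and $h^{-1}\in\SF$ --- is exactly the right use of that hypothesis. But everything after that is a description of what would need to be done rather than a proof of it, and the two items you flag as ``difficulties'' are the entire content of the theorem. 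First, the hypothesis $\LB(\F)\subseteq\LF(\F)$ must be promoted to the reverse inclusion $\LF(\F)\subseteq\LB(\F)$: any sequence of hyperbolic $w_k\in\SF$ converging to $h^{-1}$ has repelling fixed points $\beta(w_k)\in\LB(\F)$ tending to $\alpha(h)$, so your construction cannot even begin unless every attracting fixed point already lies in the closed set $\LB(\F)$ --- and no mechanism for deriving this reverse inclusion from the hypothesis is given; the invariance argument you sketch only yields density of forward images of $\LB(\F)$ inside $\LF(\F)$, as you concede. Second, the simultaneous control of both fixed points and the multiplier of the words $a_k\circ g_k^{m_k}\circ b_k$ is attributed to a ``transitivity property'' of the action on $\LF(\F)$ that is neither stated precisely nor established, and without it the words degenerate to rank-one limits. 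These are precisely the steps that Jacques and Short's structure theory of semidiscrete semigroups exists to supply. As it stands, the proposal is a sound pair of reductions wrapped around an unproven core, not a proof.
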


Using Theorem~\ref{JSgroup} we establish the the following lemma, where $\LF(\F)^{\mathrm{o}}$ denotes the interior of the forward limit set.

\begin{lemma}\label{nonsd}
Let $\F$ be a non-elementary $N$-tuple such that $\SF$ is not a discrete group. If $\LF(\F)^{\mathrm{o}}\cap\LB(\F)\neq \emptyset$ then $\F$ is not semidiscrete. 
\end{lemma}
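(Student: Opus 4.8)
The plan is to argue by contradiction: assuming that $\F$ is semidiscrete, I will show that $\SF$ is forced to be a discrete group, contradicting the hypothesis. The bridge to this contradiction is Theorem~\ref{JSgroup}, which says that a non-elementary semidiscrete $N$-tuple with $\LB(\F)\subseteq\LF(\F)$ generates a group. Since the semidiscrete and discrete properties coincide for groups, a semidiscrete group is automatically discrete. So it suffices to establish the single inclusion $\LB(\F)\subseteq\LF(\F)$ out of the hypothesis $\LF(\F)^{\mathrm{o}}\cap\LB(\F)\neq\emptyset$; everything else is assembling results already available.

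The heart of the matter is to prove the much stronger statement that $\LF(\F)=\overline{\R}$. First I would record that, since $\F$ is non-elementary, the proof of Lemma~\ref{nonel} guarantees that $\SF$ contains hyperbolic transformations; applying Lemma~\ref{nonel} to the reversed tuple $\F^{-1}=(f_1^{-1},\dots,f_N^{-1})$, which is non-elementary exactly when $\F$ is and which generates $\SF^{-1}$, identifies $\LB(\F)$ with the closure of the repelling fixed points of hyperbolic elements of $\SF$. Now pick a point $p\in\LF(\F)^{\mathrm{o}}\cap\LB(\F)$. Because $p$ lies both in the closure of the repelling fixed points and in the open set $\LF(\F)^{\mathrm{o}}$, there is a hyperbolic $h\in\SF$ whose repelling fixed point $\beta(h)$ lies in $\LF(\F)^{\mathrm{o}}$. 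Thus $\LF(\F)$ contains an open interval $V$ about $\beta(h)$.

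The key mechanism is then the interaction of forward invariance with the expanding dynamics of $h$ near $\beta(h)$. Conjugating so that $\beta(h)=0$ and $\alpha(h)=\infty$, we have $h(z)=\mu z$ with $\mu>1$, and $V$ contains some interval $(-\delta,\delta)$. Since $\LF(\F)$ is forward invariant, $h^n(V)\subseteq\LF(\F)$ for every $n$, and $h^n\big((-\delta,\delta)\big)=(-\mu^n\delta,\mu^n\delta)$ exhausts $\R$ as $n\to\infty$; adjoining the attracting fixed point $\alpha(h)=\infty$, which also lies in $\LF(\F)$, yields $\LF(\F)=\overline{\R}$. As limit sets transform equivariantly under conjugation, this conclusion is independent of the normalisation. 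In particular $\LB(\F)\subseteq\overline{\R}=\LF(\F)$.

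The remaining step is routine: under the contradiction hypothesis $\F$ is semidiscrete and non-elementary with $\LB(\F)\subseteq\LF(\F)$, so Theorem~\ref{JSgroup} makes $\SF$ a group, and a semidiscrete group is discrete, contradicting the assumption that $\SF$ is not a discrete group. I expect the only genuine obstacle to be the step $\LF(\F)=\overline{\R}$, specifically the verification that a single open arc inside $\LF(\F)$ around a repelling fixed point really does spread over the whole circle under forward iteration; this is where the hypothesis on the interior of the forward limit set is used in an essential way. The bookkeeping in passing between $\F$ and $\F^{-1}$ to describe $\LB(\F)$ through repelling fixed points also needs a little care, but it is standard.
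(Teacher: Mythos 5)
Your proposal is correct and follows essentially the same route as the paper's own proof: use Lemma~\ref{nonel} to locate a hyperbolic element of $\SF$ whose repelling fixed point lies in $\LF(\F)^{\mathrm{o}}$, expand a neighbourhood of that point under forward iteration to conclude $\LF(\F)=\overline{\R}$, and then invoke Theorem~\ref{JSgroup} together with the equivalence of discreteness and semidiscreteness for groups. Your write-up merely makes explicit the conjugation $h(z)=\mu z$ and the passage to $\F^{-1}$, which the paper leaves implicit.
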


\begin{proof}
Since $\F$ is non-elementary, Lemma~\ref{nonel} implies that $\LB(\F)$ is the smallest closed set containing all the repelling fixed points of hyperbolic elements of $\SF$. Therefore, there exists a hyperbolic transformation $f$ in $\SF$ with repelling fixed point in $\LF(\F)^o$. So, by the invariance of the limit sets under the semigroup, we have that $\overline{\mathbb{R}}=\overline{\{f^n(\Lambda^+(\F)^{\mathrm{o}})\colon n\in\mathbb{N}\}})\subseteq \LF(\F)$ and therefore $\LF+(\F)=\overline{\mathbb{R}}$. Finally, because $\SF$ is non-elementary and $\LB(\F)\subset\overline{\mathbb{R}}=\LF(\F)$, Theorem~\ref{JSgroup} tells us that if $\SF$ were semidiscrete, it would have to be a discrete group, which is a contradiction.
\end{proof}

We end this section with a lemma about the limit set of semigroups generated by pairs in $\PSL^2$.

\begin{lemma}\label{ls-inter}
Let $\F=(f,g)$ be an $N$-tuple, where $f$ and $g$ are transformations in $\PSL$, with $f(x)=x$ and $g(y)=y$, for some points $x<y$ in $\overline{\mathbb{R}}$. Also, assume that the open interval $(x,y)$ is mapped strictly inside itself by $f$ and $g$. Then the forward limit set $\LF(\F)$ is the closed interval $[x,y]$ if and only if $g(x)\leq f(y)$.
\end{lemma}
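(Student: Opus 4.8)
The plan is to realise the forward limit set through the geometry of the interval $[x,y]$ and its hyperbolic convex hull. Write $C$ for the hyperbolic convex hull of $[x,y]$ in $\overline{\H}$ (the region bounded by the geodesic joining $x$ and $y$ and the real segment $[x,y]$). Since $f$ and $g$ are isometries mapping $(x,y)$ strictly inside itself, monotonicity gives $f([x,y])=[x,f(y)]$ and $g([x,y])=[g(x),y]$ with $f(y),g(x)\in(x,y)$, and hence $f(C)$ and $g(C)$ are the hulls of $[x,f(y)]$ and $[g(x),y]$, both contained in $C$. Thus $C$ is forward invariant, so choosing the base point $z_0=i\in C$ I get that every orbit point stays in $C$ and $\LF(\F)\subseteq\overline{C}\cap\overline{\mathbb{R}}=[x,y]$. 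Moreover $f^n(i)\to x$ and $g^n(i)\to y$, so $x,y\in\LF(\F)$ always.

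For the ``only if'' direction I argue the contrapositive: suppose $g(x)>f(y)$, so that $(f(y),g(x))$ is a genuine gap between the two first-level images. Any non-identity $h\in\SF$ factors as $h=i_1\circ\cdots\circ i_n$ with outermost letter $i_1\in\{f,g\}$, so $h(C)\subseteq i_1(C)\subseteq f(C)\cup g(C)$, whose closure meets $\overline{\mathbb{R}}$ in $[x,f(y)]\cup[g(x),y]$. Consequently every accumulation point of $\{h(i)\}$ lies in $[x,f(y)]\cup[g(x),y]$, giving $\LF(\F)\subseteq[x,f(y)]\cup[g(x),y]$, which omits the open gap. Hence $\LF(\F)\neq[x,y]$.

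For the ``if'' direction, assume $g(x)\le f(y)$, so $f([x,y])\cup g([x,y])=[x,f(y)]\cup[g(x),y]=[x,y]$. Writing $h_w=i_1\circ\cdots\circ i_n$ and $I_w=h_w([x,y])$ for a word $w=i_1\cdots i_n\in\{f,g\}^n$, the identity $I_{wf}\cup I_{wg}=h_w\bigl(f([x,y])\cup g([x,y])\bigr)=h_w([x,y])=I_w$ propagates the covering inductively, so $\bigcup_{|w|=n}I_w=[x,y]$ for every $n$. Fixing a target $w^\ast\in(x,y)$, the finite words $w'$ with $w^\ast\in I_{w'}$ form an infinite, finitely branching subtree, so by K\"onig's lemma there is an infinite word $\omega$ with $w^\ast\in I_{\omega|n}$ for all $n$. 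Granting that $\operatorname{diam}(I_{\omega|n})\to0$, the nested intervals collapse to $\{w^\ast\}$, and since $h_{\omega|n}(i)$ lies in the hull of $I_{\omega|n}$---a region shrinking to the boundary point $w^\ast$---we get $h_{\omega|n}(i)\to w^\ast$ chordally, so $w^\ast\in\LF(\F)$. Combined with the first paragraph this yields $\LF(\F)=[x,y]$.

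The main obstacle is the shrinkage claim $\operatorname{diam}(I_{\omega|n})\to0$ for every infinite word $\omega$, since the generators are isometries and need not contract uniformly near their fixed endpoints (the parabolic case being the delicate one). My plan to handle it is to split by the combinatorics of $\omega$: for the pure tails $f^n$ and $g^n$ one has $f^n(y)\to x$ and $g^n(x)\to y$, so $I_{f^n}=[x,f^n(y)]$ and $I_{g^n}=[g^n(x),y]$ shrink to the endpoints regardless of whether $f,g$ are hyperbolic or parabolic; for words mixing both letters one uses that $g\circ f$ maps $[x,y]$ into the compact subinterval $[g(x),g(f(y))]\subset(x,y)$, hence is a strictly contracting hyperbolic element, so each occurrence of the block $g f$ (in composition order) contracts the Euclidean diameter by a definite factor. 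Controlling the interaction of long pure runs with the sparse mixing blocks is the technical heart of the argument, after which the covering-plus-shrinkage scheme delivers the interior points of $[x,y]$.
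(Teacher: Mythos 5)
Your argument follows the same route as the paper's. The containment $\LF(\F)\subseteq[x,y]$ via invariance (the paper uses the interval $I=[x,y]$ directly, you use its hyperbolic convex hull, which is the same thing), the gap argument for the ``only if'' direction, and the covering-by-nested-images scheme for the ``if'' direction all match; your K\"onig's-lemma step is just a repackaging of the paper's recursive choice of a letter $f_n$ with $c\in f_1\circ\cdots\circ f_n(I)$. The one step you single out as the technical heart --- that the nested intervals $I_{\omega|n}$ shrink to a point --- is precisely the step the paper disposes of with ``it is easy to check'', so you have correctly located the only non-routine content of the proof.

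Your proposed mechanism for that step, however, does not work as stated. It is false that each occurrence of the block $g\circ f$ ``contracts the Euclidean diameter by a definite factor'': a M\"obius map $h$ carrying $[x,y]$ compactly into $(x,y)$ need not satisfy $\lvert h'\rvert<1$ on all of $[x,y]$ (on $[0,1]$, postcompose $t\mapsto t/(t+\epsilon(1-t))$, whose derivative at $0$ is $1/\epsilon$, with an affine contraction onto $[1/3,2/3]$), so the particular subinterval the block acts on in a long word may well be expanded. What a single block does give, via the Schwarz--Pick lemma, is a uniform contraction of the \emph{hyperbolic} metric of the interval $(x,y)$: $f$ and $g$ are non-expanding for that metric because they map $(x,y)$ into itself, while $g\circ f$ maps $(x,y)$ onto an interval whose closure $[g(x),g(f(y))]$ is compactly contained in $(x,y)$, hence is a $\lambda$-contraction with $\lambda<1$. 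Since that hyperbolic metric dominates a constant multiple of the Euclidean one, words with infinitely many letter changes produce intervals whose diameters tend to $0$; eventually constant words are handled separately by $I_{f^n}=[x,f^n(y)]\to\{x\}$. That last step, like your assertion that $x,y\in\LF(\F)$ ``always'', silently uses that $x$ is the attracting (or parabolic) fixed point of $f$ and $y$ of $g$; the paper asserts this as a consequence of the hypotheses, but it does not follow from them (a hyperbolic $f$ with repelling fixed point $x$ and attracting fixed point inside $(x,y)$ also maps $(x,y)$ strictly inside itself, and then the conclusion can fail). You share that implicit assumption with the paper, and it holds in every application there; the genuine defect of your write-up relative to the paper's proof is the Euclidean-contraction claim, which needs the hyperbolic-metric replacement above.
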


\begin{proof}
Note that since $f$ and $g$ map $(x,y)$ strictly inside itself, they are either parabolic or hyperbolic with attracting fixed points in $\{x,y\}$. We define $I = [x,y]$, and note that $I$ is invariant under $\langle f,g\rangle$, which implies that $\LF(\F)$ is contained in $I$. If $f(y)<g(x)$, then the intervals $f(I)$ and $g(I)$ are disjoint and $\LF(\F)\subset f(I)\cup g(I)$, which implies that $\LF(\F)$ is a proper subset of $I$.\\
For the converse, assume that $g(x)\leq f(y)$. Then $g(I)\cup f(I)= I$, and thus for every $c\in I$ there exists $f_1\in\{f,g\}$ such that $c\in f_1(I)$. So we can recursively find a sequence $(f_n)$ with $f_n\in\{f,g\}$, and such that $c\in f_1\circ f_2\circ \dots \circ f_n(I)$, for all $n=1,2,\dots$. It is easy to check that the intervals $f_1\circ f_2\circ \dots \circ f_n(I)$ are nested and their Euclidean length converges to 0 as $n\to\infty$. Hence, every $c\in I$ is an accumulation point of either $x$ or $y$ under $\langle f,g\rangle$, which implies that $\LF(\F)=I$.
\end{proof}

\section{The semidiscrete and inverse-free locus}\label{sdlocus}

We now investigate the connection between the loci $\HY$ and $\SD$. We start with a characterisation of uniformly hyperbolic $N$-tuples in terms of the limits sets of the semigroups they generate. This result can be easily inferred from the material in \cite[Section 2.4]{AvBoYo2010}; we provide the proof for the sake of completeness.

\begin{lemma}[\cite{AvBoYo2010}]\label{lsuh}
An $N$-tuple $\F$ is uniformly hyperbolic if and only if all ordinates of $\F$ are hyperbolic transformations and $\LF(\F)\cap\LB(\F)=\emptyset$.
\end{lemma}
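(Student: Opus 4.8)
The plan is to prove both implications through the multicone characterisation of Theorem~\ref{abyuh}, using throughout the description of $\LF(\F)$ and $\LB(\F)$ as the closures of the attracting, respectively repelling, fixed points of the hyperbolic elements of $\SF$ (valid here because $\SF$ will always contain a hyperbolic element, e.g.\ a generator).

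For the forward direction, I would fix a multicone $M$, so that $f_i(\overline M)\subset M$ for every $i$. First I would show that every $g\in\SF$ is hyperbolic: writing $\overline M=\bigsqcup_{j=1}^{m}J_j$ as a disjoint union of closed intervals, any $g\in\SF$ is a composition of generators and hence satisfies $g(\overline M)\subset M$, so it sends each $J_j$ into a single component of $M$; the induced self-map of $\{1,\dots,m\}$ has a periodic cycle, so some power $g^\ell$ maps a closed interval $J$ strictly into its own interior. Conjugating $J$ to the arc $[0,\infty]$ turns $g^\ell$ into a M\"obius map sending $[0,\infty]$ into $(0,\infty)$, whose fixed-point quadratic has real coefficients and a root in $(0,\infty)$; its fixed points are therefore real and distinct (an elliptic map has no real fixed point, and a parabolic one cannot compress an interval about its neutral fixed point), so $g^\ell$, and hence $g$, is hyperbolic. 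In particular each generator is hyperbolic. Next, writing $g=f_i\circ g'$ with $g'\in\SF\cup\{\mathrm{Id}\}$, the attracting fixed point $\alpha_g$ lies in $\overline{g(\overline M)}\subseteq\overline{f_i(\overline M)}$; since $\bigcup_j\overline{f_j(\overline M)}$ is a finite union forming a compact subset of $M$, every attracting fixed point lies in it, whence $\LF(\F)\subset M$. Applying the same reasoning to $\F^{-1}=(f_1^{-1},\dots,f_N^{-1})$, which admits the complementary multicone $\overline{\R}\setminus\overline M$, gives $\LB(\F)=\LF(\F^{-1})\subset\overline{\R}\setminus\overline M$, and these two sets are disjoint.

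For the reverse direction I must produce a multicone from the hypotheses that every $f_i$ is hyperbolic and $\LF(\F)\cap\LB(\F)=\emptyset$. I would set $d_0=\mathrm{dist}(\LF(\F),\LB(\F))>0$ and fix a neighbourhood $V$ of $\LF(\F)$, a finite union of open intervals with disjoint closures, with $\overline V\cap\LB(\F)=\emptyset$; recall that $\LF(\F)$ is forward invariant under $\SF$. The plan is to establish that there exist an integer $n$ and a constant $\theta<1$ such that every word $g$ of length $n$ satisfies $\sup_{y\in\overline V}\lvert g'(y)\rvert_{\mathrm{ch}}\le\theta$, where $\lvert\cdot\rvert_{\mathrm{ch}}$ denotes the chordal derivative. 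Granting this, a sufficiently small metric neighbourhood $M=\{x:\mathrm{dist}(x,\LF(\F))<\delta\}\subset V$ is again a finite union of intervals with disjoint closures, and for each length-$n$ word $g$ the contraction estimate together with $g(\LF(\F))\subseteq\LF(\F)$ yields $g(\overline M)\subset M$. Thus $M$ is a multicone for the tuple $\F^{(n)}$ of all words of length $n$, so $\F^{(n)}$ is uniformly hyperbolic by Theorem~\ref{abyuh}; since the lower spectral radius satisfies $\check{\rho}(\F^{(n)})=\check{\rho}(\F)^{n}$, uniform hyperbolicity of $\F^{(n)}$ is equivalent to that of $\F$, which closes the argument.

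The main obstacle is exactly the uniform contraction claim above. A naive metric neighbourhood of $\LF(\F)$ need \emph{not} be mapped inside itself by an individual generator: a generator may expand the part of $\LF(\F)$ lying between its fixed points (only its repelling fixed point, which sits in $\LB(\F)$, is excluded from $\LF(\F)$), so $\sup_{\LF(\F)}\lvert f_i'\rvert_{\mathrm{ch}}$ can exceed $1$. Overcoming this forces one to pass to long words and to extract genuine uniform exponential contraction of a neighbourhood of $\LF(\F)$ from the single qualitative input that the attracting and repelling limit sets are separated by the positive distance $d_0$. This is the quantitative heart of the equivalence and the only place where disjointness is used; it amounts to ruling out nearly-neutral long words, which would otherwise drag $\LF(\F)$ and $\LB(\F)$ together—precisely the mechanism visible in the common-fixed-point pairs of Lemma~\ref{hump}. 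I would establish it via the domination/cone mechanism underlying Theorem~\ref{abyuh}, or by the compactness argument indicated in \cite[Section~2.4]{AvBoYo2010}.
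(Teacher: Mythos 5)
Your forward implication is sound and in fact more detailed than the paper's own: the paper simply recalls that uniform hyperbolicity forces every element of $\SF$ to be hyperbolic and observes that a multicone traps all attracting fixed points while excluding all repelling ones, which is exactly what you establish via the itinerary/periodic-component argument and the passage to the complementary multicone for the inverse tuple. That half is fine.

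The reverse implication, however, contains a genuine gap, and you have located it yourself: the claim that some fixed word length $n$ and some $\theta<1$ give $\sup_{y\in\overline V}\lvert g'(y)\rvert_{\mathrm{ch}}\le\theta$ for \emph{every} length-$n$ word $g$ is never proved. This is not a technical afterthought that can be deferred: uniform contraction of a neighbourhood of $\LF(\F)$ under all sufficiently long words is essentially equivalent to the existence of a multicone, i.e.\ to the conclusion of the lemma via Theorem~\ref{abyuh}, so writing that you ``would establish it via the domination/cone mechanism'' leaves the entire content of this direction unestablished. (Your worry is well founded: individual generators genuinely can expand on $\LF(\F)$, so nothing short of a quantitative argument about long words will do.) The paper closes this direction by a different and more economical route: from the hypotheses it first deduces that $\SF$ is inverse-free, then shows---by the arguments used for Lemma~\ref{corelemma}---that disjointness of the limit sets forces the forward and backward cores $C^+(\F)$ and $C^-(\F)$ to be disjoint, with $C^+(\F)$ forward invariant and $C^-(\F)$ backward invariant, and then invokes \cite[Lemma 2.7]{AvBoYo2010}, which converts exactly this configuration of disjoint invariant cores into uniform hyperbolicity. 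To make your proposal complete you must either carry out the compactness/cone contraction argument in full, or replace that step by verifying the hypotheses of the cited lemma as the paper does.
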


\begin{proof}
Suppose that $\F$ is uniformly hyperbolic and let $M$ be its multicone. Recall that uniform hyperbolicity implies that $\SF$ only contains hyperbolic transformations. Since $M$ is mapped compactly inside itself, all the attracting fixed points of elements of $\SF$ lie in $M$. Also, $M$ does not contain any repelling fixed points of elements of $\SF$. Hence, as $\LF(\F)$ and $\LB(\F)$ are the closures of the attracting and repelling fixed points of elements of $\SF$, respectively, $\LF(\F)$ is contained in $M$ and does not intersect $\LB(\F)$.\\
For the converse, note that because all the ordinates of $\F$ are hyperbolic and the limit sets are disjoint, $\SF$ has to be inverse-free. Using arguments similar to the proof of Lemma~\ref{corelemma}, it is easy to check that $\LF(\F)\cap\LB(\F)=\emptyset$ implies that $C^+(\F)\cap C^-(\F)=\emptyset$ and the sets $C^+(\F)$ and $C^-(\F)$ are forward and backward invariant, respectively. Thus \cite[Lemma 2.7]{AvBoYo2010} is applicable and yields that $\F$ is uniformly hyperbolic.
\end{proof}

For simplicity, we call the connected components of $\HY$ \emph{hyperbolic components}. We now present an alternative definition for the principal hyperbolic components, which is equivalent to Definition~\ref{princ} (their equivalence be easily inferred from \cite[Proposition 3]{Yo2004}).

\begin{definition}
The component $H_P$ of $\HY$ that contains all uniformly hyperbolic $N$-tuples that have a multicone consisting of a single interval is called \emph{the principal component} of $\HY$.
\end{definition}

Note that since we are working in $\PSL^N$ instead of $\SL^N$ there is only one principal component. Closely related to the principal component is the following set. 

\begin{definition}\label{mjd}
Let $J$ be an open, nontrivial interval in $\overline{\mathbb{R}}$. We define $\mathcal{M}(J)$ to be the semigroup of $\PSL$ that map $J$ inside itself.
\end{definition}

The following lemma describes the connection between $H_P$ and $\mathcal{M}(J)$ that was hinted at earlier.

\begin{lemma}\label{mj}
Suppose that $\F\in\PSL^N$ is such that $\SF$ lies in $\mathcal{M}(J)$, for some open, nontrivial interval $J\subset\overline{\mathbb{R}}$. Then $\F$ lies in the closure of the principal component.
\end{lemma}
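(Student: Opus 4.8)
The plan is to realise $\F$ as a limit of uniformly hyperbolic tuples whose multicone is the single interval $J$, and then invoke the alternative definition of the principal component. Since $\mathcal{M}(J)$ is a semigroup and each generator $f_i$ is an element of $\SF$, the hypothesis is equivalent to saying that every ordinate $f_i$ maps $J$ into $J$; as $f_i$ is a homeomorphism of the compact space $\overline{\R}$, taking closures gives $f_i(\overline J)\subseteq\overline J$ for every $i$. The obstruction to $\F$ itself being uniformly hyperbolic with multicone $J$ via Theorem~\ref{abyuh} is that an ordinate may send $\overline J$ onto a set touching $\partial J$—for instance if $f_i$ fixes an endpoint of $J$, is parabolic, or equals the identity. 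The idea is to remove this defect by post-composing every ordinate with a small genuine contraction of $\overline J$ into $J$.

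Concretely, I would first fix a family $(h_\epsilon)_{\epsilon>0}$ in $\PSL$ with $h_\epsilon\to\mathrm{Id}$ as $\epsilon\to0$, each $h_\epsilon$ mapping $\overline J$ compactly inside $J$. Such a family is easy to produce: after conjugating so that $J=(0,\infty)$, I take $h_\epsilon$ to be the hyperbolic transformation with attracting fixed point $1\in J$ and repelling fixed point $-1$ in the complementary interval, with translation length tending to $0$. Writing $h_\epsilon=\phi^{-1}\circ(w\mapsto(1-\epsilon)w)\circ\phi$ with $\phi(z)=(z-1)/(z+1)$ makes the claim transparent: $\phi(\overline J)$ is a closed real interval $[c,d]$ with $c<0<d$, and multiplication by $1-\epsilon$ sends it compactly into $(c,d)$, whence $\overline{h_\epsilon(\overline J)}\subseteq J$. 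The construction uses only that $J$ and its complement are nonempty open arcs, which holds because $J$ is nontrivial.

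Next I would set $\F_\epsilon=(h_\epsilon\circ f_1,\dots,h_\epsilon\circ f_N)$. For each $i$ the inclusion $f_i(\overline J)\subseteq\overline J$ gives $(h_\epsilon\circ f_i)(\overline J)\subseteq h_\epsilon(\overline J)$, and the latter has closure contained in $J$; thus every ordinate of $\F_\epsilon$ maps $\overline J$ into $J$. By Theorem~\ref{abyuh}, $\F_\epsilon$ is uniformly hyperbolic with multicone the single interval $J$, so $\F_\epsilon\in H_P$. Finally, continuity of composition in $\PSL$ together with $h_\epsilon\to\mathrm{Id}$ yields $h_\epsilon\circ f_i\to f_i$, i.e. $\F_\epsilon\to\F$ in $\PSL^N$, and therefore $\F\in\overline{H_P}$.

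I expect the only genuinely delicate point to be the middle step: verifying that post-composition with $h_\epsilon$ upgrades the merely forward-invariant inclusion $f_i(\overline J)\subseteq\overline J$ into the compact inclusion $\overline{(h_\epsilon\circ f_i)(\overline J)}\subseteq J$ demanded by the multicone condition, simultaneously for all ordinates and along a family of $h_\epsilon$ converging to the identity. Once the contracting family is in hand, the remainder is a routine application of the projective characterisation of uniform hyperbolicity and the single-interval description of $H_P$.
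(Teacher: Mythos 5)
Your argument is correct. Both you and the paper follow the same overall strategy---approximate $\F$ by uniformly hyperbolic $N$-tuples admitting the single interval $J$ as a multicone, and conclude via the characterisation of $H_P$ as the component containing all tuples with a one-interval multicone---but the perturbation is engineered differently. The paper first classifies the elements of $\mathcal{M}(J)$ (hyperbolic with attracting fixed point in $\overline{J}$ and repelling fixed point in $J^c$, parabolic fixing an endpoint of $J$, or the identity) and then perturbs each ordinate separately according to its type, replacing parabolics and boundary-fixing maps by hyperbolics with attracting fixed point inside $J$ and repelling fixed point outside $\overline{J}$. You instead post-compose every ordinate with a single hyperbolic contraction $h_\epsilon\to\mathrm{Id}$ that maps $\overline{J}$ compactly into $J$; this upgrades $f_i(\overline{J})\subseteq\overline{J}$ to $(h_\epsilon\circ f_i)(\overline{J})\subseteq J$ uniformly in $i$, with no case analysis and no need to know what the individual ordinates look like. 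Your route is slightly cleaner and more robust (it only uses that $\SF\subseteq\mathcal{M}(J)$ forces each generator to preserve $\overline{J}$), at the cost of producing perturbed ordinates that are no longer of any prescribed type; since Theorem~\ref{abyuh} only requires the mapping condition on $\overline{J}$, this costs nothing here.
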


\begin{proof}
Assume, without loss of generality, that $J=(0,1)$. Observe that the transformations in $\mathcal{M}(J)$ are either hyperbolic transformations whose attracting fixed points lie in $\overline{J}$ and repelling fixed points in $J^c$, parabolic transformations that fix one endpoint of $J$ and map $J$ inside itself, or the identity. Now, let $U$ be a neighbourhood of $\F$ in $\PSL^N$. Observe that a parabolic transformation that fixes 0 or 1 can be approximated by a sequence of hyperbolic transformations $(f_n)$, with $\alpha(f_n)\in (0,1)$ and $\beta(f_n)\in[0,1]^c$. Therefore, we can find $\F'$ in $U$ such that all ordinates of $\F'$ are hyperbolic transformations with attracting fixed points in $J$, and repelling fixed points outside $\overline{J}$. Hence, $\langle \F' \rangle$ maps $J$ compactly inside itself and thus $\F$ lies in $H_P$.
\end{proof}

Note that if $\F$ has rank one then $\SF$ is contained in $\mathcal{M}(J)$, for some interval $J$. So Lemma~\ref{mj} implies that $N$-tuples of rank one lie in the closure of $H_P$. It is also easy to see that all points in $H_P$ have rank one.  The converse, however, is not true since semigroups in $\mathcal{M}(J)$ are not necessarily generated by semidiscrete and inverse-free $N$-tuples. As an easy example, suppose that $(f,g)$ has rank one. Then the semigroup $\langle f,g,\mathrm{Id}\rangle$ lies in $\mathcal{M}(J)$, for some $J$, but $(f,g,\mathrm{Id})$ is not inverse-free. For a more substantial example, consider two hyperbolic transformations $f$ and $g$ with $\beta(f)=\alpha(g)$ and $\alpha(f)=\beta(g)$, and let $\F=(f,g)$. It is easy to check that $\F$ is not semidiscrete and inverse-free but lies on the boundary of $H_P$, as it can be approximated by a pair consisting of hyperbolic transformations with crossing axes. Therefore, the semidiscrete and inverse-free locus is not a closed set. However, we can prove the following.

\begin{theorem}\label{cl}
The set $\SD\setminus \overline{H_P}$ is closed in $\PSL^N$.
\end{theorem}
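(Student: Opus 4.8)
The plan is to take a sequence $(\F_n)$ in $\SD\setminus\overline{H_P}$ with $\F_n\to\F$ in $\PSL^N$ and show that the limit $\F$ again lies in $\SD\setminus\overline{H_P}$. This splits into two assertions, namely that $\F$ is semidiscrete and inverse-free and that $\F\notin\overline{H_P}$, the former being where the real content lies. First I would dispose of elementary tuples. If $\mathcal{G}\in\SD$ is elementary, then either all its ordinates share a fixed point in $\overline{\H}$ or they fix a pair of points of $\overline{\R}$ setwise; inverse-freeness excludes the possibility that an ordinate is elliptic of finite order (it would contribute its own inverse) and semidiscreteness excludes elliptics of infinite order (their powers accumulate at $\mathrm{Id}$), as well as an ordinate that swaps the fixed pair. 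What survives are hyperbolic or parabolic ordinates with a common attracting endpoint, so $\langle\mathcal{G}\rangle$ preserves a single open interval $J\subset\overline{\R}$, and Lemma~\ref{mj} gives $\mathcal{G}\in\overline{H_P}$. In particular every $\F_n$ is non-elementary, and the whole point is to show that the limit of non-elementary tuples that stay away from $\overline{H_P}$ cannot degenerate.

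For the semidiscreteness and inverse-freeness of $\F$ I would invoke Theorem~\ref{JSmain}: it suffices to exhibit a nontrivial closed set $X\subset\overline{\H}$ mapped strictly inside itself by each ordinate of $\F$. The idea is to attach to each $\F_n$ a canonical witnessing set $X_n$ whose trace on $\overline{\R}$ is controlled by the limit sets $\LF(\F_n)$ and $\LB(\F_n)$, and then to pass to a Hausdorff limit $X$. The earlier lemmas pin down the geometry of $X_n$: since $\F_n$ is non-elementary and inverse-free it is not a group, so Theorem~\ref{JSgroup} gives $\LB(\F_n)\not\subseteq\LF(\F_n)$, while the contrapositive of Lemma~\ref{nonsd} (applied to the semidiscrete, non-discrete-group $\F_n$) gives $\LF(\F_n)^{\mathrm{o}}\cap\LB(\F_n)=\emptyset$; and Lemma~\ref{nonel} realises $\LF(\F_n)$ as the closure of the attracting fixed points. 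Thus each $\F_n$ carries a repelling fixed point separated from the attracting limit set, and continuity of the fixed points of individual words in $\F_n$ should show that this separation survives in the limit, yielding the desired $X$ and hence $\F\in\SD$.

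To show $\F\notin\overline{H_P}$ I would exploit that $\F_n\notin\overline{H_P}$ forces a genuine non-degeneracy: $\langle\F_n\rangle$ preserves no single open interval, for otherwise $\F_n$ would have rank one and, by the remark following Lemma~\ref{mj}, lie in $\overline{H_P}$. Concretely this non-degeneracy appears as a repelling fixed point of some element of $\langle\F_n\rangle$ interlaced strictly between two attracting fixed points, a configuration that is stable under small perturbations. Transporting this interlacing to the limit shows that $\langle\F\rangle$ likewise preserves no single open interval; combined with the characterisation of $\overline{H_P}\cap\SD$ as the rank-one locus (the nontrivial inclusion being Lemma~\ref{mj}), this gives $\F\notin\overline{H_P}$, and in passing confirms that $\F$ is non-elementary, as needed above.

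The main obstacle is precisely the two passages to the limit, and it is the reason $\SD$ fails to be closed in general: a priori the witnessing sets $X_n$---equivalently the separation between $\LF(\F_n)$ and $\LB(\F_n)$, or the gap between $\langle\F_n\rangle$ and $\mathrm{Id}$---could collapse as $n\to\infty$. The crux is to extract from the hypothesis $\F_n\notin\overline{H_P}$ a \emph{uniform} amount of interlacing that persists through the Hausdorff limit, so that the property ``mapped strictly inside itself'' is preserved rather than weakening to ``mapped inside itself'', and to verify that the only degeneration compatible with semidiscreteness is the single-interval collapse that describes the approach to $\overline{H_P}$. This quantitative control is where the non-elementary limit-set machinery (Lemmas~\ref{nonel} and \ref{nonsd} together with Theorem~\ref{JSgroup}) carries the argument.
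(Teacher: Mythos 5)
Your outline sets the problem up correctly (take $\F_n\in\SD\setminus\overline{H_P}$ with $\F_n\to\F$, show $\F\in\SD$ and $\F\notin\overline{H_P}$), and you correctly locate the difficulty: the hypothesis $\F_n\notin\overline{H_P}$ must be converted into a \emph{uniform} obstruction to degeneration. But the proposal stops precisely there. The tool that actually supplies this uniformity in the paper is J{\o}rgensen's inequality for semigroups (Theorem~\ref{jo}), which you never invoke, and the lemmas you do cite (Lemmas~\ref{nonel}, \ref{nonsd}, Theorem~\ref{JSgroup}) are qualitative statements about a single tuple; they give no control that is uniform along the sequence. The paper's argument runs as follows: supposing $\langle\F\rangle$ accumulates at the identity, one extracts words $f_m\in\langle\F_m\rangle$ with $f_m\to\mathrm{Id}$ and splits into two cases. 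If $f_m$ is antiparallel with some ordinate $g_m$, then $\lvert\,\lvert\mathrm{tr}(f_m)\rvert^2-4\rvert+\lvert\,\lvert\mathrm{tr}([f_m,g_m])\rvert-2\rvert\to 0$, violating the bound $\geq 1$ from Theorem~\ref{jo}. If not, the failure of rank one yields an ordinate $h_m$ and an interval $I_m$ meeting $\LB(\F_m)$ to which Lemma~\ref{ls-inter} applies, giving $\LF(f_m,h_m)=\overline{I_m}$, hence $\LB(\F_m)\cap\LF(\F_m)^{\mathrm{o}}\neq\emptyset$, and Lemma~\ref{nonsd} contradicts semidiscreteness of $\F_m$. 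Neither mechanism is present in your sketch, and without one of them the ``crux'' you name remains open.

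Two further points. First, the Hausdorff-limit strategy for the witnessing sets $X_n$ cannot work as stated: ``mapped strictly inside itself'' is not closed under Hausdorff limits, and the counterexamples in Section~\ref{sdlocus} (a pair with $\alpha(f)=\beta(g)$, $\beta(f)=\alpha(g)$, which is a limit of rank-one pairs) show that the collapse you worry about genuinely occurs; ruling it out for non-principal sequences is the content of the theorem, not an input to it. The same applies to your claim that the interlacing configuration certifying $\F_n\notin\overline{H_P}$ is ``stable under small perturbations'' and ``transports to the limit'': fixed points of words in $\langle\F_n\rangle$ can collide as $n\to\infty$, and excluding this requires the separate stability result Theorem~\ref{pr} (whose proof, via Lemma~\ref{pr1}, again rests on Theorem~\ref{jo} together with Lemmas~\ref{ls-inter} and \ref{nonsd}). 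Second, a smaller error: in your treatment of elementary tuples, the surviving ordinates need not share a common \emph{attracting} endpoint---Lemma~\ref{hump} exhibits a semidiscrete, inverse-free pair whose common fixed point is attracting for one generator and repelling for the other---so the route to ``$\langle\mathcal{G}\rangle$ preserves a single interval'' needs a different justification, though this case is peripheral to the main argument.
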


Note that Theorem~\ref{cl} implies Theorem~\ref{clweak} in the introduction. It also yields the following useful corollary.

\begin{corollary}\label{sc}
The closure of $\SD$ in $\PSL^N$ is $\SD\cup \partial H_P$.
\end{corollary}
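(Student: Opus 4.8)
The plan is to derive Corollary~\ref{sc} directly from Theorem~\ref{cl} by a short point-set argument, since the substantive analytic content has already been packaged into the closedness of $\SD\setminus\overline{H_P}$. The two facts I would record at the outset are that the principal component lies inside $\SD$---indeed $H_P\subseteq\HY\subseteq\SD$, whence $\overline{H_P}\subseteq\overline{\SD}$---and that $H_P$, being a connected component of the open set $\HY$, is itself open, so that $\partial H_P=\overline{H_P}\setminus H_P$ and in particular $\overline{H_P}=H_P\cup\partial H_P$.

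For the inclusion $\SD\cup\partial H_P\subseteq\overline{\SD}$, the term $\SD$ is trivial, and $\partial H_P\subseteq\overline{H_P}\subseteq\overline{\SD}$ by the observation above.

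For the reverse inclusion I would split $\SD$ along $\overline{H_P}$, writing $\SD=(\SD\setminus\overline{H_P})\cup(\SD\cap\overline{H_P})$, take closures, and use that closure distributes over a finite union, so that $\overline{\SD}=\overline{\SD\setminus\overline{H_P}}\cup\overline{\SD\cap\overline{H_P}}$. Theorem~\ref{cl} identifies the first term with $\SD\setminus\overline{H_P}\subseteq\SD$, while the second term, being the closure of a subset of the closed set $\overline{H_P}$, is contained in $\overline{H_P}=H_P\cup\partial H_P\subseteq\SD\cup\partial H_P$ (using $H_P\subseteq\SD$ once more). Combining the two displays gives $\overline{\SD}\subseteq\SD\cup\partial H_P$, and together with the previous paragraph the two inclusions yield the claimed equality.

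As for difficulty, there is essentially no obstacle once Theorem~\ref{cl} is in hand, since the corollary is formal bookkeeping; the only facts I would state explicitly are the elementary ones that closure commutes with finite unions and that $H_P$ is open, and even the metrizability of $\PSL^N$ is not needed. It is worth noting, to confirm that the statement is sharp, that $\partial H_P$ is a genuine addition rather than being absorbed into $\SD$: the pair $(f,g)$ with $\beta(f)=\alpha(g)$ and $\alpha(f)=\beta(g)$ discussed above lies on $\partial H_P$ yet fails to be semidiscrete and inverse-free, so $\partial H_P\not\subseteq\SD$.
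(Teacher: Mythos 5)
Your argument is correct and is exactly the derivation the paper intends: the corollary is stated as an immediate consequence of Theorem~\ref{cl} (the paper gives no separate proof), and your decomposition $\SD=(\SD\setminus\overline{H_P})\cup(\SD\cap\overline{H_P})$ together with the openness of $H_P$ and the inclusion $H_P\subseteq\HY\subseteq\SD$ is the natural way to fill in the bookkeeping. Nothing is missing.
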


In order to prove Theorem~\ref{cl} we are going to need certain preliminary results. First, we have a version of J{\o}rgensen's inequality for semigroups, which can be found in \cite[Theorem 12.11]{JS}.

\begin{theorem}[\cite{JS}]\label{jo}
Suppose that $f$ and $g$ are M\"obius transformations in $\PSL$ such that $(f,g)$  is non-elementary and semidiscrete. Then either  $(f,g )$ has rank one, or else
\[
\lvert \lvert\mathrm{tr}(f)\rvert^2 -4\rvert  + \lvert \lvert \mathrm{tr}([f,g])\rvert -2 \rvert\geq 1,
\]
where $[f,g]=f\circ g \circ f^{-1}\circ g^{-1}$ is the commutator of $f$ and $g$.
\end{theorem}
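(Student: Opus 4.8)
The plan is to argue by contraposition. I assume that $(f,g)$ is non-elementary and semidiscrete, that
$\mu\vcentcolon=\lvert\lvert\tr(f)\rvert^2-4\rvert+\lvert\lvert\tr([f,g])\rvert-2\rvert<1$, and, towards a contradiction, that $\F=(f,g)$ does \emph{not} have rank one; I then aim to show that $\F$ fails to be semidiscrete. The engine for the final contradiction is Lemma~\ref{nonsd}: it suffices to arrange that $\SF$ is not a discrete group and that $\LF(\F)^{\mathrm{o}}\cap\LB(\F)\neq\emptyset$, since that lemma then denies semidiscreteness.

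\emph{First reductions.} As $\mu$ and all hypotheses are conjugation-invariant, I would normalise $f$. I would first exclude $f$ elliptic: an infinite-order elliptic has powers accumulating at $\mathrm{Id}$, contradicting semidiscreteness, while a finite-order elliptic together with a non-commuting generator generates a dense subsemigroup by the argument behind Lemma~\ref{nonel} (via \cite{BaBeCa1996}), again contradicting semidiscreteness. With $f$ non-elliptic I take $f=\mathrm{diag}(\lambda,\lambda^{-1})$, $\lambda>1$, in the hyperbolic case (and $f(z)=z+1$ in the parabolic case, which is analogous), and write $g=\left(\begin{smallmatrix}a&b\\c&d\end{smallmatrix}\right)$. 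Non-elementarity forces $bc\neq0$. A direct computation gives $\tr([f,g])-2=-bc(\lambda-\lambda^{-1})^2$ and $\lvert\tr(f)\rvert^2-4=(\lambda-\lambda^{-1})^2$, so $\mu=(\lambda-\lambda^{-1})^2(1+\lvert bc\rvert)$; thus $\mu<1$ simultaneously forces $f$ to be close to parabolic and $\lvert bc\rvert$ to be small.

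\emph{From trace smallness to non-discreteness of the ambient group.} Here I invoke the classical J{\o}rgensen inequality for subgroups of $\PSL\subset\mathrm{PSL}(2,\mathbb{C})$. Because a finite orbit of the group $G=\langle f,g,f^{-1},g^{-1}\rangle$ is in particular a finite orbit of $\SF$, the non-elementarity of $\F$ makes $G$ non-elementary; since $\mu<1$, the contrapositive of the classical inequality shows that $G$ is not discrete. Consequently $\SF$ cannot be a group: were it one, it would contain $f^{-1}$ and $g^{-1}$ and hence equal $G$, which, being a non-discrete group, is not semidiscrete, contradicting our standing assumption. This places us exactly in the hypotheses of Lemma~\ref{nonsd}.

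\emph{The main obstacle.} It remains to show that, in the absence of rank one, non-discreteness of $G$ forces $\LF(\F)^{\mathrm{o}}\cap\LB(\F)\neq\emptyset$; Lemma~\ref{nonsd} then yields the contradiction, and the theorem follows. I expect this to be the hard step. The idea is that non-discreteness of $G$ interleaves the attracting and repelling fixed points of the hyperbolic elements of $\SF$ densely, so that by Lemma~\ref{nonel} the forward limit set acquires interior meeting $\LB(\F)$; the only way to avoid such overlap is for $\LF(\F)$ to collapse to a single closed interval whose complement carries $\LB(\F)$, which by Lemma~\ref{ls-inter} and Lemma~\ref{mj} is precisely the rank-one configuration we have excluded. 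Making this dichotomy quantitative—controlling the locations of the fixed points of $\SF$ from the smallness of $\mu$ so as to guarantee the interior overlap off the rank-one locus—is the delicate part, and is exactly where the one-sided, forward-only nature of the semigroup, rather than mere non-discreteness of $G$, must be exploited.
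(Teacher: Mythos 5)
The paper does not actually prove Theorem~\ref{jo}; it is imported verbatim from \cite[Theorem 12.11]{JS}, so your attempt can only be judged on its own merits --- and it has a fatal quantitative gap at its first real step. You claim that $\mu<1$ together with the contrapositive of the classical J{\o}rgensen inequality forces the group $G=\langle f,g,f^{-1},g^{-1}\rangle$ to be non-discrete. But the classical inequality bounds $\lvert\mathrm{tr}^2(f)-4\rvert+\lvert\mathrm{tr}([f,g])-2\rvert$, whereas the theorem's left-hand side uses $\lvert\lvert\mathrm{tr}([f,g])\rvert-2\rvert$, and for real $t$ one has $\lvert\lvert t\rvert-2\rvert\leq\lvert t-2\rvert$ with a large gap when $t<0$. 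So $\mu<1$ does \emph{not} negate the classical inequality: when $\mathrm{tr}([f,g])$ is near $-2$ (commutator close to a ``negative parabolic'', which is exactly the crossing-axes regime --- in your own normalisation $\mathrm{tr}([f,g])=2-bc(\lambda-\lambda^{-1})^2$, which is near $-2$ when $bc(\lambda-\lambda^{-1})^2$ is near $4$), one can have $\mu<1$ while the classical left-hand side exceeds $4$, so classical J{\o}rgensen is silent and $G$ may perfectly well be discrete. This also falsifies your identity $\mu=(\lambda-\lambda^{-1})^2(1+\lvert bc\rvert)$: the condition $\lvert\lvert\mathrm{tr}([f,g])\rvert-2\rvert<1$ allows $bc(\lambda-\lambda^{-1})^2\in(3,5)$, a second regime your computation misses entirely. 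The theorem is a strictly stronger, Fuchsian-type inequality, and it cannot be obtained by quoting the classical one; this is presumably why \cite{JS} prove it through their semigroup structure theory rather than by reduction to the group case.

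The second gap is the one you flag yourself, but it is worse than ``delicate'': the dichotomy you hope for --- not rank one plus non-discreteness of $G$ forces $\LF(\F)^{\mathrm{o}}\cap\LB(\F)\neq\emptyset$ --- is false without further input, since rank-two Schottky-type pairs are non-elementary, semidiscrete, not of rank one, and have Cantor limit sets with empty interior and $\LF(\F)\cap\LB(\F)=\emptyset$. So the entire burden of the theorem (using the smallness of $\mu$ to rule out all finite-rank-$\geq2$ and wilder configurations) rests on a step for which you offer no argument, only the correct observation that Lemma~\ref{nonsd} would finish if it held. A secondary error: your exclusion of elliptic $f$ is wrong for finite order --- a finite-order elliptic with a non-commuting partner need not generate a dense semigroup (the modular group is generated by elliptics of orders $2$ and $3$ and is discrete; the B\'ar\'any--Beardon--Carne density theorem requires infinite order). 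In your contrapositive set-up the elliptic case need not be excluded at all, so this is reparable, but as written the reduction is invalid. Your one fully sound ingredient is the observation that $\SF$ cannot be a non-discrete group (semidiscreteness and discreteness coincide for groups, as the paper notes), which correctly supplies one hypothesis of Lemma~\ref{nonsd}; the rest of the route does not close.
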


We recall the following definition from \cite[Section 12]{JS} (see also \cite[Lemma 12.2]{JS}).

\begin{definition}
Suppose that $f$ and $g$ are either hyperbolic or parabolic transformations in $\PSL$, with no common fixed points. The pair $f$ and $g$ is called \emph{antiparallel} if $(f,g)$ does \emph{not} have rank one.
\end{definition}

Observe that if $f$ and $g$ are antiparallel then $(f,g)$ does not lie in the closure of the principal component in $\PSL^2$. See Figure~\ref{antipara} for configurations of antiparallel pairs.

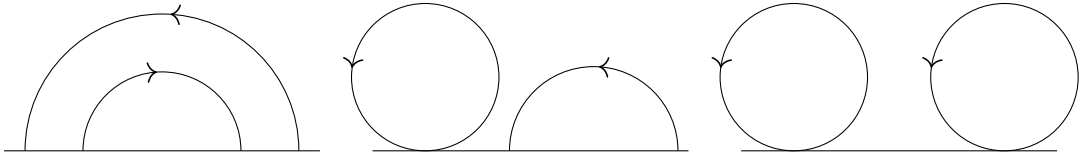
\begin{figure}[ht]
\centering
\begin{tikzpicture}[scale = 1.4]
\begin{scope}

\draw(-1.5,0) -- (1.5,0);
\draw[directed] (1.3,0) arc (0:180:1.3);
\draw[directed] (-.75,0) arc (180:0:0.75);
\end{scope}

\begin{scope}[xshift= 3.5cm]

\draw(-1.5,0) -- (1.5,0);
\draw[directed] (-1,0.7) circle (0.7);
\draw[directed] (1.4,0) arc (0:180:0.8);
\end{scope}

\begin{scope}[xshift= 7cm]

\draw(-1.5,0) -- (1.5,0);
\draw[directed] (-1,0.7) circle (0.7);
\draw[directed] (1,0.7) circle (0.7);
\end{scope}

\end{tikzpicture}
\caption{Pairs of antiparallel transformations.}\label{antipara}
\end{figure}

We first prove an elementary lemma about the commutator of M\"obius transformations.

\begin{lemma}\label{comm}
Let $f$ and $g$ be hyperbolic or parabolic transformations in $\PSL$ with a common fixed point. Then, for any transformations $h$ and $k$ in $\langle f,g \rangle$ we have that $\lvert \mathrm{tr}([h,k]) \rvert=2$.
\end{lemma}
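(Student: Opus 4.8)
The plan is to reduce the statement to the abelian structure of the linear parts of affine maps. First I would use the shared fixed point to move the pair into the stabiliser of the point at infinity. Since $f$ and $g$ are hyperbolic or parabolic, they are non-elliptic and their common fixed point $p$ lies in $\overline{\R}$; choosing $\phi\in\PSL$ with $\phi(p)=\infty$ and conjugating, and recalling that the trace is invariant under conjugation, I may assume that both $f$ and $g$ fix $\infty$. Every element of $\PSL$ that fixes $\infty$ has the form $z\mapsto\lambda z+\mu$ with $\lambda>0$ and $\mu\in\R$ (in the $\PSL$ normalisation the diagonal entries give $\lambda=a^2>0$), that is, it is an orientation-preserving affine map of the real line.

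Next I would observe that $\SF=\langle f,g\rangle$ consists entirely of such affine maps, because the composition of affine maps is affine; this is the one place where it matters that we work with the semigroup rather than just with $f$ and $g$ themselves. Thus any $h,k\in\langle f,g\rangle$ can be written as $h(z)=\alpha z+\beta$ and $k(z)=\gamma z+\delta$ with $\alpha,\gamma>0$. The assignment $(z\mapsto\lambda z+\mu)\mapsto\lambda$ is a homomorphism from this affine semigroup into the multiplicative group $(\R_{>0},\cdot\,)$, which is abelian. Consequently the linear part of the commutator $[h,k]=h\circ k\circ h^{-1}\circ k^{-1}$ is $\alpha\gamma\alpha^{-1}\gamma^{-1}=1$, so $[h,k]$ is a translation $z\mapsto z+t$ for some $t\in\R$.

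Finally, a translation corresponds in $\PSL$ to the matrix $\pm\begin{pmatrix}1 & t\\ 0 & 1\end{pmatrix}$, which has trace $\pm 2$: it is parabolic when $t\neq 0$ and the identity when $t=0$. In either case $\lvert\mathrm{tr}([h,k])\rvert=2$, and since the trace is preserved under the conjugation by $\phi$, the same equality holds for the commutator of the original transformations. I do not anticipate a genuine obstacle in this argument; the only points that require a little care are verifying that a non-elliptic transformation fixing $\infty$ is truly an orientation-preserving affine map (so that the linear-part map lands in the abelian group $\R_{>0}$), and confirming that the whole semigroup $\langle f,g\rangle$—and hence every $h,k$ drawn from it—remains inside the affine stabiliser of $\infty$.
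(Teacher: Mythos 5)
Your proposal is correct and follows essentially the same route as the paper: conjugate the common fixed point to $\infty$, note that the semigroup then consists of orientation-preserving affine maps $z\mapsto\lambda z+\mu$ with $\lambda>0$, and conclude that any commutator is a translation, hence parabolic or the identity with trace $\pm2$. The only cosmetic difference is that the paper computes the commutator explicitly while you invoke the homomorphism onto the abelian group of linear parts; both yield the same conclusion.
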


\begin{proof}
Conjugate $f$ and $g$ by a transformation in $\PSL$ so that they both fix the point at infinity. Then, any transformation $\phi$ in $\langle f,g \rangle$ is of the form $\phi(z)=\lambda z+ \kappa$, for some $\lambda>0$ and $\kappa\in\mathbb{R}$. Suppose that $h(z)=az+b$ and $k(z)=cz+d$ lie in $\langle f,g \rangle$. Then we can easily see that the commutator of $h$ and $k$ is given by $[h,k](z)=z +(a-1)d - (c-1)d$. Therefore, $[h,k]$ is either parabolic or the identity, which yields the desired result.
\end{proof}

\begin{lemma}\label{pr1}
Suppose that $(f_n)$ and $(g_n)$ are sequences of hyperbolic or parabolic transformations in $\PSL$ such that $f_n$ and $g_n$ do not have any common fixed points and are antiparallel, for all positive integers $n$. Assume that $(f_n)$ converges to $f$ and $(g_n)$ converges to $g$, where $f,g$ are non-identity transformations that have a common fixed point in $\overline{\mathbb{R}}$. Then for all $n$ large enough, $(f_n,g_n)$ is not semidiscrete and inverse-free.
\end{lemma}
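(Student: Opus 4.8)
The plan is to argue by contradiction. Suppose that $(f_n,g_n)$ is semidiscrete and inverse-free for infinitely many $n$; passing to a subsequence I may assume this holds for every $n$, and I write $\F_n=(f_n,g_n)$. The first step is to record that each $\F_n$ is non-elementary: being antiparallel, $f_n$ and $g_n$ are hyperbolic or parabolic with no common fixed point, and neither a hyperbolic nor a parabolic transformation admits an invariant two-point subset of $\overline{\mathbb{R}}$ other than (for a hyperbolic) its own fixed-point pair. Hence no two-point set can be fixed setwise by both ordinates, and since there is also no common fixed point, the semigroup has no finite orbit.

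With non-elementarity in hand I would apply the semigroup Jørgensen inequality (Theorem~\ref{jo}) to both $(f_n,g_n)$ and $(g_n,f_n)$. Since antiparallel pairs do not have rank one, and since $[g_n,f_n]=[f_n,g_n]^{-1}$ gives $|\mathrm{tr}([g_n,f_n])|=|\mathrm{tr}([f_n,g_n])|$, this yields
\[
|\,|\mathrm{tr}(f_n)|^2-4\,|+|\,|\mathrm{tr}([f_n,g_n])|-2\,|\geq 1\quad\text{and}\quad |\,|\mathrm{tr}(g_n)|^2-4\,|+|\,|\mathrm{tr}([f_n,g_n])|-2\,|\geq 1.
\]
Letting $n\to\infty$ and using continuity of the trace together with Lemma~\ref{comm} (the limits $f,g$ are hyperbolic or parabolic with a common fixed point, so $|\mathrm{tr}([f,g])|=2$), the commutator terms vanish in the limit and I obtain $|\mathrm{tr}(f)|^2\geq 5$ and $|\mathrm{tr}(g)|^2\geq 5$. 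Here I use that $|\mathrm{tr}(f_n)|\geq 2$ forces $f$ to be hyperbolic, parabolic or the identity, and $f$ is non-identity by hypothesis. In particular $f$ and $g$ cannot be parabolic, so this settles every case in which one of the limit transformations is parabolic.

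It remains to treat the case in which $f$ and $g$ are both hyperbolic and share a fixed point, which I expect to be the main obstacle. After conjugating I may place the common fixed point at $\infty$. The antiparallel hypothesis should force the shared fixed point to be attracting for one ordinate and repelling for the other in the limit, for otherwise the approximating axes would be co-oriented near $\infty$ and the pairs would eventually be rank one rather than antiparallel; thus $(f,g)$ is precisely of the type studied in Lemmas~\ref{hump} and \ref{limitset}, whose forward limit set is a full interval with nonempty interior. The strategy is then to invoke the limit-set criterion of Lemma~\ref{nonsd}: each $\SF$ associated to $\F_n$ is a non-elementary semigroup that, being inverse-free, is not a group, so it suffices to show that for all large $n$ the interior $\LF(\F_n)^{\mathrm{o}}$ meets $\LB(\F_n)$, which contradicts semidiscreteness. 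Concretely, as $g_n\to g$ the repelling fixed point of $g_n$ detaches from $\infty$ and moves to a large but finite point, while the forward limit set retains an interval; estimates in the spirit of Lemma~\ref{ls-inter} should show that this detached repelling fixed point falls inside $\LF(\F_n)^{\mathrm{o}}$ once $n$ is large.

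The delicate point, and the step I expect to require the most care, is this last one: controlling $\LF(\F_n)$ and $\LB(\F_n)$ for the perturbed antiparallel pairs uniformly in $n$, since the limit sets of general antiparallel pairs are not directly computable. I would make this quantitative by exploiting the explicit affine normal form near $\infty$ together with the characterisation of when a forward limit set fills an interval (Lemma~\ref{ls-inter}), reducing the verification of $\LF(\F_n)^{\mathrm{o}}\cap\LB(\F_n)\neq\emptyset$ to an inequality between the perturbed fixed-point data of $f_n$ and $g_n$ that holds for all sufficiently large $n$.
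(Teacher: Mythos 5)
Your reduction to the case where both limits $f,g$ are hyperbolic is correct and follows the same route as the paper: antiparallel pairs are never rank one, so Theorem~\ref{jo} applies, and since $\mathrm{tr}([f_n,g_n])\to 2$ by Lemma~\ref{comm} and continuity of the commutator, a parabolic limit would force the left-hand side of J{\o}rgensen's inequality below $1$. Your observation that the common fixed point must be attracting for one limit and repelling for the other is also right. The problem is the main case, which you have only sketched and which does not go through as proposed. You cannot control $\LF(f_n,g_n)$ by applying Lemma~\ref{ls-inter} (or ``estimates in its spirit'') directly to the antiparallel pair: that lemma requires both transformations to map a common interval $(x,y)$ strictly inside itself with the fixed points $x,y$ at its endpoints, and for an antiparallel hyperbolic pair with fixed points ordered $\alpha(f_n)<\beta(f_n)<\alpha(g_n)<\beta(g_n)$ the repelling fixed point of $f_n$ sits \emph{inside} the candidate interval $(\alpha(f_n),\alpha(g_n))$, so $f_n$ does not map it into itself. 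This is exactly why, as you note, ``the limit sets of general antiparallel pairs are not directly computable,'' and no inequality between the fixed-point data of $f_n$ and $g_n$ alone will rescue the argument.

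The missing idea is to manufacture an auxiliary element of the semigroup. Since $\beta(f)=\alpha(g)$, Lemma~\ref{hump} shows that $\overline{\langle f,g\rangle}$ contains parabolic transformations fixing the common point and mapping $(\alpha(f),\alpha(g))$ strictly inside itself; by convergence of $(f_n,g_n)$ to $(f,g)$ one extracts $h_n\in\langle f_n,g_n\rangle$ converging to such a parabolic. One then splits into two cases. If $h_n$ is antiparallel with $f_n$ or $g_n$ infinitely often, Theorem~\ref{jo} applied to that pair gives a contradiction because $\mathrm{tr}(h_n)\to 2$ and the commutator trace tends to $2$. Otherwise $h_n$ is eventually hyperbolic with $\alpha(f_n)<\alpha(h_n)<\beta(f_n)<\alpha(g_n)<\beta(h_n)<\beta(g_n)$, and now the pair $(f_n,h_n)$ \emph{does} satisfy the hypotheses of Lemma~\ref{ls-inter} on the interval $(\alpha(f_n),\alpha(h_n))$; one checks $h_n(\alpha(f_n))\leq f_n(\alpha(h_n))$ for large $n$, concludes $\LF(f_n,h_n)=[\alpha(f_n),\alpha(h_n)]$, and then pushes this interval forward by $g_n$ to place $\beta(f_n)$ in $\LF(\F_n)^{\mathrm{o}}\cap\LB(\F_n)$, after which Lemma~\ref{nonsd} finishes the argument as you intended. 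Without this intermediate element $h_n$ your final step has no mechanism behind it, so as written the proof is incomplete.
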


\begin{proof}
Let $\F_n=(f_n,g_n)$, $\F=(f,g)$, $S_n=\langle \F_n \rangle$ and $S=\langle \F \rangle$. Firstly, note that due to Lemma~\ref{comm}, we have that $\lvert \mathrm{tr}([h,k])\rvert=2$ for all transformations $h,k$ in $S$. Also, the commutator is a continuous function from $\PSL^2$ to $\PSL$, which implies that for any $h_n, k_n\in S_n$ with $h_n\to h$ and $k_n\to k$, the sequence $\lvert \mathrm{tr}([h_n,k_n]) \rvert$ converges to 2. We are going to use this fact throughout the proof.\\
Let us assume that $\F_n$ is semidiscrete and inverse-free, for infinitely many $n$. By relabelling the sequence $\F_n$, we can assume that $\F_n$ is semidiscrete and inverse-free for all $n$. Since the transformations $f,g$ have a common fixed point in $\overline{\mathbb{R}}$, and none of them is the identity, each of the $f,g$ is either hyperbolic or parabolic. Suppose that $f$ is parabolic. Note that the trace of a M\"obius transformation is continuous in $\PSL$, and so $\mathrm{tr}(f_n)$ converges to 2 as $n\to\infty$. So, since the trace of the commutator $[f_n,g_n]$ also converges to 2, we have that the quantity
\[
\lvert \lvert\mathrm{tr}(f_n)\rvert^2 -4\rvert  + \lvert \lvert \mathrm{tr}([f_n,g_n])\rvert -2 \rvert,
\]
is less than one, for all $n$ large enough. Hence, J{\o}rgensen's inequality for semigroups, Theorem~\ref{jo}, implies that in order for $\F_n$ to be semidiscrete, it would have to be of rank one. This is a contradiction because $f_n$ and $g_n$ are antiparallel, and so $\F_n$ cannot be semidiscrete. We reach the same conclusion if $g$ is parabolic.\\
So for the rest of the proof we can assume that $f$ and $g$ are hyperbolic. Note that the set of hyperbolic transformations of $\PSL$ is open. So because $f_n$ and $g_n$ converge to $f$ and $g$, respectively, they have to be hyperbolic transformations, for all $n$ large enough. Thus, by relabelling the sequences $(f_n)$ and $(g_n)$, we can assume that they are sequences of hyperbolic transformations. Also, since $f_n$ and $g_n$ are antiparallel, we have that the attracting fixed point of one of the $f$ and $g$ is the repelling fixed point of the other. Without loss of generality, we assume that $\beta(f)=\alpha(g)$, and conjugate by a M\"obius transformation so that $\alpha(f)<\beta(f)=\alpha(g)<\beta(g)$ (see Figure~\ref{789} on the right).\\
Due to our conjugation, for all $n$ large enough we have that $\alpha(f_n)<\beta(f_n)<\alpha(g_n)<\beta(g_n)$ (see Figure~\ref{789} on the left). Because $\beta(f)=\alpha(g)$, Lemma~\ref{hump} is applicable to $\F$ and implies that the closure of the semigroup $S$ in $\PSL$ contains parabolic transformations that fix $\beta(f)=\alpha(g)$ and map the interval $(\alpha(f),\alpha(g))$ strictly inside itself. So, there exists a sequence $(\phi_n)$ in $S$ that converges to such a parabolic transformation. By the convergence of $(f_n)$ and $(g_n)$ to $f$ and $g$, respectively, we can pass to a subsequence $(\F_{n_k})$ of $(\F_n)$ in order to obtain a sequence of transformations $(h_k)$ in $S_{n_k}$ such that $h_k$ converges to a parabolic transformation fixing $\beta(f)=\alpha(g)$ and mapping $(\alpha(f),\alpha(g))$ strictly inside itself. For convenience we relabel the sequences $(h_k)$ and $(\F_{n_k})$ so that $h_n$ lies in $S_n=\langle\F_n\rangle$, for all $n$.

\begin{figure}[ht]
\centering
\begin{tikzpicture}[scale = 1.4]
\begin{scope}

\draw(-2.15,0) -- (2.15,0);
\draw[directed] (-.2,0) arc (0:180:.9)node[pos=.45,above,yshift =3pt]{$f_n$};
\draw[directed] (2,0) arc (0:180:.9)node[pos=.45,above,yshift =3pt]{$g_n$};
\node[below] at (-2,0) {$\alpha(f_n)$};
\node[below, xshift=-.4cm] at (-.2,0) {$\beta(f_n)$};
\node[below, xshift=.4cm] at (.2,0) {$\alpha(g_n)$};
\node[below] at (2,0) {$\beta(g_n)$};
\end{scope}

\begin{scope}[xshift= 3cm]
\draw[->] (-.5,.7) -- (.5,.7)node[pos=0.5,above]{$n\to\infty$};

\end{scope}

\begin{scope}[xshift= 6cm]

\draw(-2.15,0) -- (2.15,0);
\draw[directed] (0,0) arc (0:180:1)node[pos=.45,above,yshift =3pt]{$f$};
\draw[directed] (2,0) arc (0:180:1)node[pos=.45,above,yshift =3pt]{$g$};
\node[below] at (-2,0) {$\alpha(f)$};
\node[below] at (0,0) {$\beta(f)=\alpha(g)$};
\node[below] at (2,0) {$\beta(g)$};
\end{scope}

\end{tikzpicture}
\caption{The configuration of $f$ and $g$.}\label{789}
\end{figure}
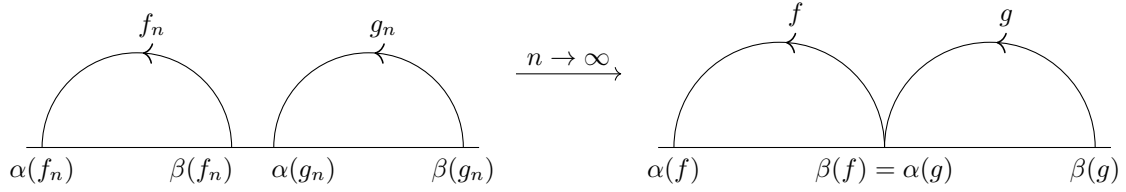

Note that because $\F_n$ is semidiscrete and inverse-free, $h_n$ has to be either hyperbolic or parabolic, for all $n$. Suppose that $h_n$ and one of $f_n$ or $g_n$ are antiparallel for infinitely many $n$. By passing to a subsequence if necessary and without loss of generality, we can assume that $h_n$ and $f_n$ are antiparallel, for all $n\in\mathbb{N}$ (see Figure~\ref{456}, $h_n$ is hyperbolic on the left-hand side and parabolic on the right). Then J{\o}rgensen's inequality for semigroups, Theorem~\ref{jo}, implies that 
\begin{equation}\label{joapp}
\lvert \lvert\mathrm{tr}(h_n)\rvert^2 -4\rvert  + \lvert \lvert \mathrm{tr}([f_n,h_n])\rvert -2 \rvert\geq 1,
\end{equation}
for all positive integers $n$. But the left-hand side of \eqref{joapp} converges to zero, as $h_n$ converges to a parabolic transformation and $\mathrm{tr}([f_n,h_n])$ converges to 2, which is a contradiction. We reach the same contradiction by assuming that $h_n$ and $g_n$ are antiparallel, for all $n\in\mathbb{N}$.

\begin{figure}[ht]
\centering
\begin{tikzpicture}[scale = 1.4]
\begin{scope}

\draw(-2.3,0) -- (2.3,0);
\draw[directed] (-.35,0) arc (0:180:.9)node[pos=.45,above,yshift =3pt]{$f_n$};
\draw[directed] (2.15,0) arc (0:180:.9)node[pos=.45,above,yshift =3pt]{$g_n$};
\draw[directed] (1.5,0) arc (0:180:.5)node[pos=.45,above,yshift =.5pt, xshift=3pt]{$h_n$};
\end{scope}

\begin{scope}[xshift= 6cm]

\draw(-2.3,0) -- (2.3,0);
\draw[directed] (-.35,0) arc (0:180:.9)node[pos=.45,above,yshift =3pt]{$f_n$};
\draw[directed] (2.15,0) arc (0:180:.9)node[pos=.45,above,yshift =3pt]{$g_n$};
\draw[directed] (0,0.3) circle (.3);
\node[above] at (0,0.6) {$h_n$};
\end{scope}
\end{tikzpicture}
\caption{The transformations $h_n$ and $f_n$ are antiparallel.}\label{456}
\end{figure}
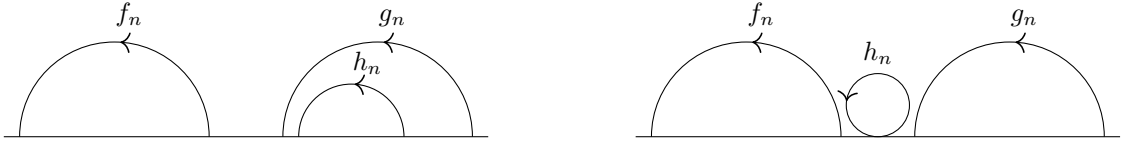

Finally, the only case left to consider is when none of the pairs $h_n, f_n$ and $h_n,g_n$ are antiparallel, for any $n$ large enough. Observe that if $h_n$ is parabolic, then one of the pairs $h_n, f_n$ and $h_n,g_n$ has to be antiparallel. Hence, all the transformations $h_n$ have to be hyperbolic, for all $n$ large enough. Also, due to the fact that the sequences of points $\beta(f_n),\alpha(h_n),\beta(h_n)$ and $\alpha(g_n)$ all converge to the same point $\beta(f)=\alpha(g)$, the axis of $h_n$ has to intersect both $\mathrm{Ax}(f_n)$ and $\mathrm{Ax}(g_n)$, for all $n$ large enough (see Figure~\ref{222}). As the sequence $h_n$ converges to a parabolic transformation that fixes $\beta(f)=\alpha(g)$ and maps $(\alpha(f),\beta(f))$ strictly inside itself, we must have that $\alpha(h_n)<\beta(f_n)$, for all $n$ large enough. Thus, we have the following order for the fixed points of the hyperbolic transformations $f_n, h_n$ and $g_n$
\begin{equation}\label{order}
\alpha(f_n)<\alpha(h_n)<\beta(f_n)<\alpha(g_n)<\beta(h_n)<\beta(g_n), \quad\text{for all } n\text{ large enough}.
\end{equation}

Note that the sequence of points $(h_n(\alpha(f_n)))$ converges to some point in the open interval $(\alpha(f),\beta(f))$, and the sequence $(f_n(\alpha(h_n)))$ converges to $\beta(f)$. Hence, $h_n(\alpha(f_n))\leq f_n(\alpha(h_n))$, for all $n$ large enough. We can then apply Lemma~\ref{ls-inter} to the transformations $f_n, h_n$ in order to obtain that $\LF(f_n, h_n)=[\alpha(f_n),\alpha(h_n)]$. Furthermore, by the convergence of $(f_n),(g_n)$ and $(h_n)$, the point $\beta(f_n)$ is contained in the interior of the interval $g_n([\alpha(f_n),\alpha(h_n)])\subset\LF(S_n)$, for all $n$ large enough. Therefore, the points $(\beta(f_n))$ lie in the interior of $\LF(\F_n)$ for all $n$ large enough. Lemma~\ref{nonsd} is then applicable to $\F_n$ and implies that the semigroup $S_n$ is either a discrete group or $\F_n$ is not semidiscrete. But, $\F_n$ is inverse-free, so it $S_n$ cannot be a group and we have reached a contradiction.\\
 All our contradictions were reached because we assumed that there exists a subsequence of $\F_n$ that is semidiscrete and inverse-free. Therefore, we conclude that only finitely many of $\F_n$ are semidiscrete and inverse-free.
\end{proof}

\begin{figure}[ht]
\centering
\begin{tikzpicture}[scale = 1.4]
\begin{scope}

\draw(-2.3,0) -- (2.3,0);
\draw[directed] (-.35,0) arc (0:180:.9)node[pos=.45,above,yshift =3pt]{$f_n$};
\draw[directed] (2.15,0) arc (0:180:.9)node[pos=.45,above,yshift =3pt]{$g_n$};
\draw[directed] (.7,0) arc (0:180:.7)node[pos=.45,above,yshift =3pt]{$h_n$};
\end{scope}
\end{tikzpicture}
\caption{The sequence of transformations $(h_n)$.}\label{222}
\end{figure}
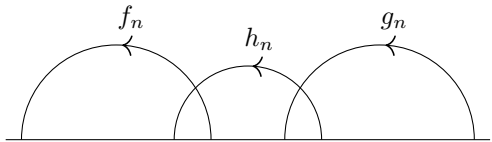

\begin{theorem}\label{pr}
Let $(\F_m)$ be a sequence in $\SD$ that converges to a point $\F\in\SD$. If $\F$ has rank one, then for all $m$ large enough $\F_m$ also has rank one. 
\end{theorem}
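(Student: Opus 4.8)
The plan is to argue by contradiction and reduce the failure of rank one to a configuration that is forbidden by Lemma~\ref{pr1}. Suppose the conclusion fails. Passing to a subsequence and relabelling, we may assume that every $\F_m=(f^m_1,\dots,f^m_N)$ lies in $\SD$ but has rank strictly greater than one, while $\F_m\to\F$ and $\F=(f_1,\dots,f_N)$ has rank one. Since $\F$ has rank one there is a single open interval $J=(a,b)$ that each $f_i$ maps strictly inside itself; in particular no $f_i$ is elliptic or the identity, so each $f_i$ is hyperbolic or parabolic. The same holds for each ordinate $f^m_i$: a semidiscrete and inverse-free $N$-tuple can have no elliptic ordinate (an infinite-order elliptic accumulates at $\mathrm{Id}$, while a finite-order one would place $\mathrm{Id}$ in $\SF$) and no ordinate equal to $\mathrm{Id}$. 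First I would dispose of the easy case: for each $i$ either $f_i(\overline J)$ is compactly contained in $J$, or $f_i$ fixes an endpoint of $J$ (equivalently $f_i(\overline J)$ meets $\partial J$). If every ordinate is of the first type, then by continuity $f^m_i(\overline J)\subset J$ for all $i$ and all large $m$, so $J$ is mapped strictly inside itself by each ordinate of $\F_m$, forcing $\F_m$ to have rank one and contradicting our assumption. Hence some ordinate fixes an endpoint of $J$, say $a$, and any obstruction to rank one must be located there.

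The heart of the argument is to show that the failure of rank one forces two ordinates to collide at such a fixed endpoint. A single hyperbolic or parabolic transformation always maps some interval strictly inside itself, so one ordinate alone cannot obstruct rank one; moreover, away from the finitely many fixed endpoints the images $f^m_i(\overline J)$ are compactly contained in $J$ for large $m$ and cause no trouble, so the obstruction is localized at a shared fixed endpoint. I would therefore argue that there is an endpoint, say $a$, and two ordinates $f_i,f_j$ of $\F$ that both fix $a$, for which the perturbed pair $(f^m_i,f^m_j)$ is \emph{antiparallel} for infinitely many $m$. Indeed, the limits $f_i,f_j$ share the fixed point $a$ and both contract $J$, while the nearby transformations $f^m_i,f^m_j$, having generically lost their common fixed point, split in conflicting directions at $a$, so that no single interval is simultaneously contracted by both; this is precisely the definition of antiparallel (no common fixed point, and not rank one as a pair). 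Passing to a further subsequence, we may assume in addition that $f^m_i$ and $f^m_j$ have no common fixed point for every $m$.

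With this reduction in hand, Lemma~\ref{pr1} applies to the sequences $(f^m_i)$ and $(f^m_j)$: these are antiparallel pairs of hyperbolic or parabolic transformations, with no common fixed points, converging to the non-identity transformations $f_i,f_j$ that share the fixed point $a\in\overline{\mathbb{R}}$. Lemma~\ref{pr1} then gives that $(f^m_i,f^m_j)$ is not semidiscrete and inverse-free for all large $m$. However $\langle f^m_i,f^m_j\rangle$ is a subsemigroup of $\langle\F_m\rangle$, so $\mathrm{Id}\notin\overline{\langle\F_m\rangle}$ forces $\mathrm{Id}\notin\overline{\langle f^m_i,f^m_j\rangle}$; that is, the pair $(f^m_i,f^m_j)$ lies in $\SD$, which is the desired contradiction. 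As an alternative to invoking Lemma~\ref{pr1} as a black box, one can mirror the endgame of its proof: when an ordinate at $a$ splits ``outward'', its repelling fixed point moves to the interior of $J$, and Lemma~\ref{ls-inter} (applied to this ordinate together with a companion contracting ordinate) shows that this repelling point lies in $\LF(\F_m)^{\mathrm o}\cap\LB(\F_m)$; Lemma~\ref{nonsd} then shows $\F_m\notin\SD$ directly.

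The main obstacle is the localization claim in the second paragraph: proving rigorously that the failure of rank one must manifest as an antiparallel pair of ordinates whose limits share a fixed endpoint of $J$, rather than through some more global configuration, and controlling the direction in which an endpoint-fixing ordinate splits under perturbation. I expect this to require a careful case analysis near each fixed endpoint, together with the observation that the dichotomy between the ``inward'' split (which can be absorbed by slightly adjusting $J$, preserving rank one) and the ``outward'' split (which creates the antiparallel collision) is governed entirely by membership in $\SD$. Two degenerate subcases also need attention and should be routine: the possibility that $(f^m_i,f^m_j)$ retains a common fixed point along a subsequence (where Lemma~\ref{comm} forces the commutator to be parabolic and the pair not to obstruct rank one), and the possibility that $\F$ is elementary (where all ordinates share a fixed point or fix a pair setwise, and the interval can be tracked directly).
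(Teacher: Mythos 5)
Your overall strategy is the same as the paper's: argue by contradiction, extract from each $\F_m$ an antiparallel pair of ordinates converging to a pair of ordinates of $\F$ with a common fixed point at an endpoint of $J$, and then invoke Lemma~\ref{pr1} (via the fact that a subsemigroup of a semidiscrete and inverse-free semigroup is again semidiscrete and inverse-free) to conclude. The endgame of your argument, and the disposal of the easy case where every ordinate maps $\overline J$ compactly into $J$, are both fine.

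The genuine gap is exactly the step you flag as ``the main obstacle'': you do not prove that the failure of rank one for $\F_m$ forces some \emph{pair} of ordinates to be antiparallel. Your heuristic --- ``no single interval is simultaneously contracted by both; this is precisely the definition of antiparallel'' --- conflates the statement ``the pair $(f_i^m,f_j^m)$ does not have rank one'' with ``the $N$-tuple $\F_m$ does not have rank one''. These are not equivalent: an $N$-tuple can fail to have rank one even though every pair of its ordinates has rank one (so no pair is antiparallel), via a configuration of three hyperbolic ordinates with pairwise crossing axes arranged cyclically. The paper's proof devotes its entire first half to precisely this case: assuming no antiparallel pairs exist, it exhibits ordinates $f_i^m, f_j^m, f_l^m$ with $\alpha(f_i^m)<\alpha(f_j^m)<\beta(f_i^m)<\beta(f_j^m)$ and $\mathrm{Ax}(f_l^m)$ crossing both axes with $\beta(f_l^m)\in(\alpha(f_i^m),\alpha(f_j^m))$, and then shows that in the limit this forces $f_i$ and $f_l$ to share an axis with $\alpha(f_i)=\beta(f_l)$, contradicting $\F\in\SD$. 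Without an argument covering this three-ordinate obstruction, your reduction to Lemma~\ref{pr1} is not justified. A secondary, smaller point: once you do have an antiparallel pair $(f_i^m,f_j^m)$ converging to $(f_i,f_j)$, the fact that $f_i$ and $f_j$ share a fixed point is not automatic from antiparallelism alone; it needs to be derived from the constraint that both limits map $J$ strictly inside itself (a pair of such maps with no common endpoint fixed point is robustly of rank one and so cannot be approximated by antiparallel pairs), which is how the paper argues it.
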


\begin{proof}
Write $\F=(f_1,f_2,\dots,f_N)$ and $\F_m=(f_1^m,f_2^m,\dots,f_N^m)$. Also, without loss of generality, suppose that $(0,1)\subset \overline{\mathbb{R}}$ is mapped strictly inside itself by $\SF$. Since $\F$ is semidiscrete and inverse-free and maps an interval strictly inside itself, all ordinates of $\F$ have to either be hyperbolic transformations with attracting fixed points in $[0,1]$ and repelling fixed points in the complement of $(0,1)$, or parabolic transformations that fix $0$ or $1$ and map $(0,1)$ inside itself. Let us assume that there exists a subsequence of $(\F_m)$, such that $\F_{m_k}$ is semidiscrete and inverse-free but not of rank one, for all $k$. By relabelling the sequence $(\F_{m_k})$, we can assume that $\F_m$ is semidiscrete and inverse-free but not of rank one, for all $m$.\\
We are first claim that $\F_m$ has to contain pairs of ordinates that are antiparallel, for all $m$ large enough. Let us assume otherwise. Then in order for $\F_m$ to not have rank one, for all large enough $m$, there must exists ordinates $f_i^m,f_j^m$ and $f_l^m$ of $\F_m$ with the following properties, up to conjugation: $f_i^m$ and $f_j^m$ are hyperbolic transformations, with crossing axes and such that $\alpha(f_i^m)<\alpha(f_j^m)<\beta(f_i^m)<\beta(f_j^m)$; the axis of $f_l^m$ intersects $\mathrm{Ax}(f_i^m)$ and $\mathrm{Ax}(f_j^m)$, and the repelling fixed point of $f_l^m$ (or its unique fixed point if it is parabolic) lies in the interval $(\alpha(f_i^m),\alpha(f_j^m))$ (see Figure~\ref{0909}). Also, by passing to a further subsequence if necessary, we can assume that $f_i^m,f_j^m$ and $f_l^m$  converge to some ordinates $f_i,f_j$ and $f_l$ of $\F$, respectively. Note that the attracting fixed points of $f_i^m,f_j^m$ and $f_l^m$ converge to some points in $[0,1]$. In addition, the transformations $f_i,f_j$ and $f_l$ have to map the interval $(0,1)$ strictly inside itself. Hence, the points $\alpha(f_i^m),\beta(f_l^m)$ and $\alpha(f_j^m)$ all converge to some endpoint of $(0,1)$, say 0 (recall that we assumed that no ordinates of $\F_m$ are antiparallel). In addition, because the axis of $f_l^m$ intersects $\mathrm{Ax}(f_i^m)$ and $\alpha(f_i^m)\leq \beta(f_l^m)$, we have that $\beta(f_i^m)\leq \alpha(f_l^m)$ for all $m$. But $\alpha(f_l^m)$ converges to some point in $[0,1]$, and so the transformations $f_i$ and $f_l$ have the same axes and $\alpha(f_i)=\beta(f_l)$. This implies that $\F$ is not semidiscrete and inverse-free and we have reached a contradiction.

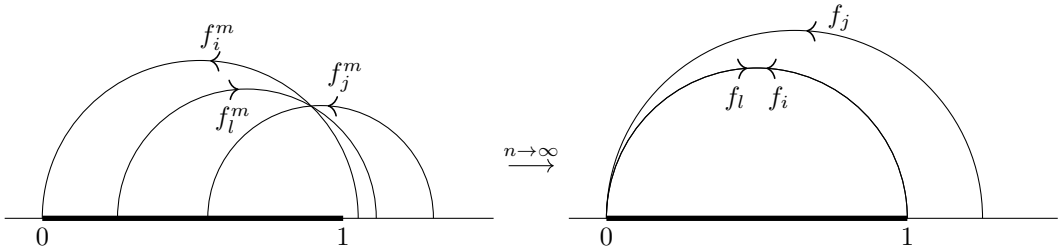
\begin{figure}[ht]
\centering
\begin{tikzpicture}

\begin{scope}
\draw(-2.5,0) -- (4,0);
\draw[directed] (2.2,0) arc (0:180:2.1)node[pos=0.465,above, yshift=1pt]{$f_i^m$};
\draw[directed] (3.2,0) arc (0:180:1.5)node[pos=0.43,above, yshift=3pt]{$f_j^m$};
\draw[directed] (-1,0) arc (180:0:1.72)node[pos=0.465,below, yshift=-3pt]{$f_l^m$};

\node[below] at (-2,0) {0}; 
\node[below] at (2,0) {1};
\draw[line width=2pt] (-2,0) -- (2,0);
\end{scope}

\begin{scope}[xshift= 4.5cm]
\draw[->] (-.3,.7) -- (.3,.7)node[pos=0.5,above]{$\scriptstyle n\to\infty$};
\end{scope}

\begin{scope}[xshift=7.5cm]
\draw(-2.5,0) -- (4,0);
\draw[directed] (3,0) arc (0:180:2.5)node[pos=0.4,above, xshift=-5pt]{$f_j$};
\draw[directed] (2,0) arc (0:180:2)node[pos=0.465,below, yshift=-3pt,xshift=2pt]{$f_i$};
\draw[directed] (-2,0) arc (180:0:2)node[pos=0.465,below, yshift=-3pt, xshift=-2pt]{$f_l$};

\node[below] at (-2,0) {0}; 
\node[below] at (2,0) {1};
\draw[line width=2pt] (-2,0) -- (2,0);
\end{scope}

\end{tikzpicture}
\caption{$\langle \F_m \rangle$ does not contain any antiparallel pairs.}\label{0909}
\end{figure}
So the fact that $\F_m$ does not have rank one implies that we can find ordinates $f_i^m$ and $f_j^m$ of $\F_m$ that are antiparallel, for all $m$ large enough. Without loss of generality, assume that $f_1^m$ and $f_2^m$ are antiparallel for all $m$ (see Figure~\ref{mnb}). By passing to a subsequence if necessary, we can assume that $f_1^m$ and $f_2^m$  converge to the ordinates $f_1$ and $f_2$ of $\F$, respectively. Because the attracting fixed points (or the unique fixed points if they are parabolic) of $f_1$ and $f_2$ have to lie in $[0,1]$, and the pair $f_1,f_2$ is approximated by a pair of antiparallel transformations, we have that $f_1$ and $f_2$ both fix an endpoint of $(0,1)$, say $1$. Then Lemma~\ref{pr1} is applicable to the sequences $(f_1^m)$ and $(f_2^m)$, and yields that $\langle \F_m \rangle$ is not semidiscrete and inverse-free, for any $m$ large enough, which is a contradiction.\\
We conclude that for all $n$ large enough, $\F_n$ has rank one.
\end{proof}

\begin{figure}[ht]
\centering
\begin{tikzpicture}

\begin{scope}
\draw(-2.5,0) -- (4,0);
\draw[directed] (1.7,0) arc (0:180:1.7)node[pos=0.465,above, yshift=1pt]{$f_1^m$};
\draw[directed] (3.5,0) arc (0:180:0.8)node[pos=0.45,above, yshift=1pt]{$f_2^m$};

\node[below] at (-2,0) {0}; 
\node[below] at (2,0) {1};
\draw[line width=2pt] (-2,0) -- (2,0);
\end{scope}

\begin{scope}[xshift= 7cm]
\draw(-2.5,0) -- (4,0);
\draw[directed] (3,0) arc (0:180:2.3)node[pos=0.465,above, yshift=1pt]{$f_1^m$};
\draw[directed] (-1,0) arc (180:0:1)node[pos=0.45,above, yshift=1pt]{$f_2^m$};

\node[below] at (-2,0) {0}; 
\node[below] at (2,0) {1};
\draw[line width=2pt] (-2,0) -- (2,0);
\end{scope}

\begin{scope}[yshift= -3cm]
\draw(-2.5,0) -- (4,0);
\draw[directed] (1,0.7) circle (0.7);
\draw[directed] (3.5,0) arc (0:180:0.8)node[pos=0.45,above, yshift=1pt]{$f_2^m$};

\node[left] at (0.3,0.7) {$f_1^m$};
\node[below] at (-2,0) {0}; 
\node[below] at (2,0) {1};
\draw[line width=2pt] (-2,0) -- (2,0);
\end{scope}

\begin{scope}[xshift =7cm,  yshift= -3cm]
\draw(-2.5,0) -- (4,0);
\draw[directed] (1,0.7) circle (0.7);

\draw[directed] (3.5,0.7) circle (0.7);

\node[left] at (0.3,0.7) {$f_1^m$};
\node[left] at (2.8,0.7) {$f_2^m$};
\node[below] at (-2,0) {0}; 
\node[below] at (2,0) {1};
\draw[line width=2pt] (-2,0) -- (2,0);
\end{scope}

\end{tikzpicture}
\caption{Configurations of the antiparallel pairs of $\F_m$.}\label{mnb}
\end{figure}
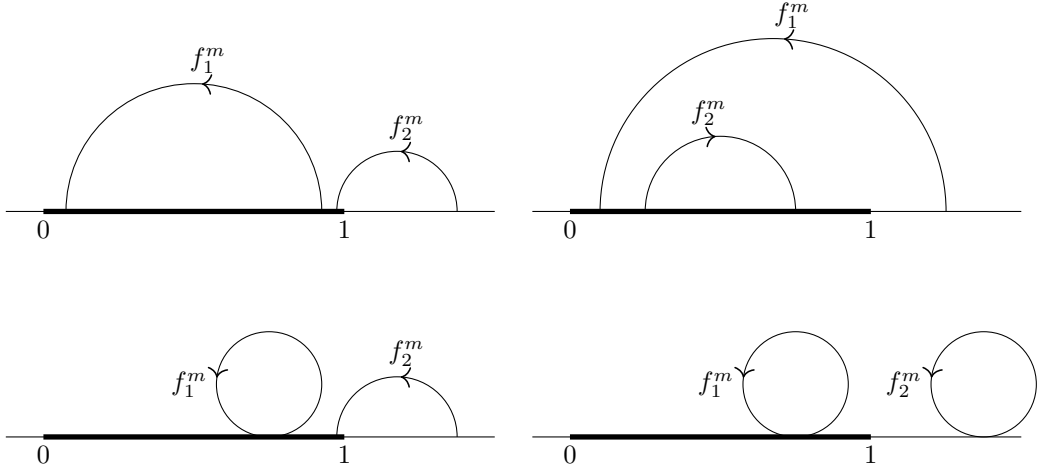

Observe that Theorem~\ref{pr} implies that the principle component is isolated in the parameter space $\HY$, in the sense that $\overline{H_P}$ is disjoint from the closure of all non-principle components. When combined with Theorem~\ref{cl}, however, it implies that the principal component is also isolated from all other components of the parameter space $\SD$, which is a significantly stronger statement.

We are now ready to prove Theorem~\ref{cl}.

\begin{proof}[Proof of Theorem~\ref{cl}]
Suppose that $(\F_m)$ is a sequence in $\SD\setminus \overline{H_P}$ that converges to some $\F\in\PSL^N$. We need to prove that $\F$ lies in $\SDP$. Recall that Lemma~\ref{mj} implies that $N$-tuples that have rank one lie in the closure of the principal component. Therefore $\F_m$ cannot have rank one. So due to Theorem~\ref{pr}, $\F$ cannot lie in the closure of the principal component, and we only need to show that it lies in $\SD$; that is, $\SF$ is semidiscrete and inverse-free. Assume, towards a contradiction, that there exists a sequence of transformations in $\SF$ that converges to the identity. Then, from the convergence of $\F_m$ to $\F$, we can find $f_m\in\langle \F_m \rangle$, for all $m\geq0$, such that $(f_m)$ converges to the identity as $m\to\infty$. By passing on to a subsequence if necessary, we assume that $\alpha(f_m)$, where here we denote by $\alpha(f_m)$ either the attracting or the unique fixed point of $f_m$, converges to some point $\alpha\in\overline{\mathbb{R}}$.\\
Suppose that for all $m$ there exist ordinates $g_m$ of $\F_m $ such that $f_m$ and $g_m$ are antiparallel. We may assume that $(g_m)$ converges to some ordinate $g$ of $\F$. Then J{\o}rgensen's inequality for semigroups, Theorem~\ref{jo}, implies that 
\begin{equation}\label{p}
\lvert \lvert\mathrm{tr}(f_m)\rvert^2 -4\rvert  + \lvert \lvert \mathrm{tr}([f_m,g_m])\rvert -2 \rvert\geq 1.
\end{equation}
But because $f_m$ converges to the identity and $g_m$ to $g$, the commutator $[f_m,g_m]=f_mg_mf_m^{-1}g_m^{-1}$ converges to the identity. So the left-hand side of inequality \eqref{p} converges to zero, as $m\to\infty$, which is a contradiction.\\
Therefore, $f_m$ is not antiparallel with any other ordinate of $\F_m$, for any $m$. So, since $\F_m$ does not have rank one, for every $m\in\mathbb{N}$ we can find an ordinate $h_m$ of $\F_m$, such that $f_m$ and $h_m$ map an open interval $I_m$ inside itself and $\LB(\F_m)\cap I_m\neq \emptyset$. After a suitable conjugation, we can assume that $h_m$ and $I_m$ have the following properties: $h_m(x_m)=x_m$, for some point $x_m$ in $\overline{\mathbb{R}}$ with $x_m<\alpha(f_m)$, and $I_m=(x_m,\alpha(f_m))$. We can also assume that $(h_m)$ converges to some ordinate $h$ of $\F$, and $x_m$ converges to some $x\leq \alpha$. Then, since $(f_m)$ converges to the identity and $h_m$ lies in a bounded subset of $\PSL$, we have that $f_m(x_m)\leq h_m(\alpha(f_m))$, for all $m$ large enough. Hence, we can apply Lemma~\ref{ls-inter} to the transformations $f_n$ and $h_n$ in order to obtain that $\LF(f_m,h_m) =[x_m, \alpha(f_m)]=\overline{I_m}$, for all $m$ large enough. But this implies that $\emptyset\neq\LB(\F_m)\cap I_m\subseteq\LB(\F_m)\cap\LF(\F_m)^{\mathrm{o}}$. Lemma~\ref{nonsd} is then applicable to the $N$-tuple $\F_m$ and yields that either it is a discrete group or it is not semidiscrete, which is a contradiction.\\
In conclusion, contrary to our assumption, $\langle \F\rangle$ is semidiscrete and inverse-free.
\end{proof}

We now study $N$-tuples on the boundary of non-principal components. Recall that such $N$-tuples are semidiscrete and inverse-free due to Theorem~\ref{cl}, but we can also prove that they have finite rank (see Theorem~\ref{npsch} to follow).

We start by defining the `cores" of an $N$-tuple, a concept introduced by Avila, Bochi and Yoccoz in \cite[Section 2.4]{AvBoYo2010}.

\begin{definition}
Let $\F\in\PSL^N$. The complement of the union of the connected components of $\overline{\mathbb{R}}\setminus\LF(\F)$ that intersect $\LB(\F)$ will be called \emph{the forward core} of $\F$ and will be denoted by $C^+(\F)$. Similarly, we define the \emph{backward core} $C^-(\F)$ of $\F$ to be the complement of the union of the connected components of $\overline{\mathbb{R}}\setminus\LB(\F)$ that intersect $\LF(\F)$. 
\end{definition}

Note that for any $\F\in\PSL^N$, because $\LF(\F)$ and $\LB(\F)$ are closed sets, the cores $C^+(\F)$ and $C^-(\F)$ are also closed. Also, for a generic $N$-tuple $\F$, the cores $C^+(\F)$ and $C^-(\F)$ may be trivial subsets of the extended real line. If $\F$ has finite rank, however, then we have the next lemma. Observe that for any $\F\in\PSL^N$ the components of $C^+(\F)$ and $C^-(\F)$ appear in alternating order around $\overline{\mathbb{R}}$. Therefore, $C^+(\F)$ is finitely connected if and only if $C^-(\F)$ is finitely connected. See Figure~\ref{corefig} for examples of the forward and backward cores of a pair in $\PSL^2$ and a triplet in $\PSL^3$.

\begin{lemma}\label{corelemma}
Suppose that $\F\in\SD$ has finite rank. The forward and backward cores of $\F$ satisfy the following properties:
\begin{enumerate}
\item $C^+(\F)\cap C^-(\F)=\partial C^+(\F)\cap \partial C^-(\F)$;
\item $\partial C^+(\F)\subset \LF(\F)$ and $\partial C^-(\F)\subset \LB(\F)$;
\item $\LF(\F)\subseteq C^+(\F)$ and $\LB(\F)\subseteq C^-(\F)$;
\item $C^+(\F)$ and $C^-(\F)$ are finitely connected;
\item $f(C^+(\F))\subseteq C^+(\F)$ and $f^{-1}(C^-(\F))\subseteq C^-(\F)$, for all $f\in\SF$.
\end{enumerate}
\end{lemma}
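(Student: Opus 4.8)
The plan is to exploit the finite-rank hypothesis through a single multicone and to push all information about the cores back onto the two limit sets. Fix, by Definition~\ref{sch}, a union $C=\bigcup_{i=1}^m I_i$ of $m$ open intervals with disjoint closures, each mapped strictly inside itself by every ordinate of $\F$, and write $D=\overline{\mathbb{R}}\setminus C$, a union of $m$ closed intervals $K_1,\dots,K_m$; thus $C$ and the $K_j$ alternate around $\overline{\mathbb{R}}$. The first step is to establish the two containments $\LF(\F)\subseteq\overline{C}$ and $\LB(\F)\subseteq D$. Since each ordinate $f$ satisfies $f(\overline{C})\subseteq\overline{C}$, it maps the hyperbolic convex hull of $\overline{C}$ in $\overline{\mathbb{H}}$ into itself; choosing the basepoint $z_0$ in this hull and using that $\LF(\F)$ is independent of $z_0$ gives $\LF(\F)\subseteq\overline{C}$. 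Dually, $f(C)\subseteq C$ forces $f^{-1}(D)\subseteq D$, so the hull of $D$ is forward invariant under $\SF^{-1}$ and hence $\LB(\F)\subseteq D$. In particular $\LF(\F)\cap\LB(\F)\subseteq\overline{C}\cap D=\partial C$, a finite set.

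Parts (3), (2), (4) and (5) then follow with little difficulty. For (3), each core is the complement of a union of components of the complement of its own limit set, hence contains that limit set. For (2), $C^+(\F)$ is closed and any boundary point is either an endpoint of a removed gap or a limit of removed gaps; since the endpoints of any component of $\overline{\mathbb{R}}\setminus\LF(\F)$ lie in the closed set $\LF(\F)$, we get $\partial C^+(\F)\subseteq\LF(\F)$, and symmetrically $\partial C^-(\F)\subseteq\LB(\F)$. For (4), a component $G$ of $\overline{\mathbb{R}}\setminus\LF(\F)$ that meets $\LB(\F)\subseteq D$ must contain a point of some $K_j$, and since $\LF(\F)$ avoids the interior of $K_j$ it follows that $G\supseteq\mathrm{int}\,K_j$; as distinct gaps are disjoint, at most $m$ gaps are removed, so $C^+(\F)$ (and likewise $C^-(\F)$) has at most $m$ components. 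For (5), the forward invariance of $\LF(\F)$ yields $f^{-1}(\overline{\mathbb{R}}\setminus\LF(\F))\subseteq\overline{\mathbb{R}}\setminus\LF(\F)$, so $f^{-1}$ carries each gap into a single gap, and the backward invariance of $\LB(\F)$ shows that the image of a removed gap is again removed; hence $f^{-1}$ preserves the removed set and $f(C^+(\F))\subseteq C^+(\F)$. The statement $f^{-1}(C^-(\F))\subseteq C^-(\F)$ is the mirror image with the two limit sets interchanged.

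The substantial part is (1). By (2) the inclusion $\partial C^+(\F)\cap\partial C^-(\F)\subseteq C^+(\F)\cap C^-(\F)$ is immediate, so it suffices to prove $\mathrm{int}\,C^+(\F)\cap C^-(\F)=\emptyset$ and its mirror. A point of $\mathrm{int}\,C^+(\F)$ lies in a retained gap of $\LF(\F)$, in the interior of $\LF(\F)$, or in a boundary point of $\LF(\F)$ flanked by retained gaps. A short case check, using (2), (3) and Lemma~\ref{nonsd} (which gives $\LF(\F)^{\mathrm{o}}\cap\LB(\F)=\emptyset$ since $\SF$ is not a group), reduces everything to two facts about points of $\LF(\F)\cap\LB(\F)\subseteq\partial C$: no such point is interior to a core, and no gap is retained by \emph{both} limit sets (an elementary interval comparison shows that two such gaps would have to coincide in a common interval $(a,b)$ with $a,b\in\LF(\F)\cap\LB(\F)$ and $(a,b)$ meeting neither limit set). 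I would settle both by invoking that, for non-elementary $\F$, the limit sets $\LF(\F)$ and $\LB(\F)$ are perfect (see \cite{FrMaSt2012,JS}): at such an endpoint $a$ both limit sets must accumulate, and since the adjacent gap contains no points of either, they must accumulate from the opposite side of $a$, which is the interior of a single $I_i$ or a single $K_j$; that interior is disjoint from one of $\LF(\F),\LB(\F)$ by the first step, a contradiction. The main obstacle is exactly this exclusion, where semidiscreteness enters through perfectness of the limit sets; the \emph{elementary} tuples, whose limit sets are supported on one or two points, fall outside this argument and I would dispose of them by computing the cores directly.
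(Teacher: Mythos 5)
Your opening step (a multicone $C$ with $\LF(\F)\subseteq\overline{C}$ and $\LB(\F)\subseteq\overline{\mathbb{R}}\setminus C$) is exactly how the paper begins, and your treatments of parts (2), (3) and (4) are fine. For part (5) you actually give a cleaner argument than the paper: you deduce invariance of the removed gaps directly from the forward invariance of $\LF(\F)$ and backward invariance of $\LB(\F)$, whereas the paper derives (5) by contradiction from parts (1)--(3). That part of your proposal is a genuine, correct simplification.

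The problem is part (1) in the elementary case. Your non-elementary argument (Lemma~\ref{nonsd} plus perfectness of the limit sets plus the exclusion of a common retained gap) is workable, but you dismiss the elementary case on the grounds that elementary tuples have limit sets ``supported on one or two points''. That is false: \emph{elementary} means the semigroup has a finite orbit in $\overline{\H}$, not that the limit sets are finite. The pair of Lemma~\ref{hump} is elementary (all elements fix $\infty$), semidiscrete, inverse-free and of rank one, yet by Lemma~\ref{limitset} its forward limit set is the nondegenerate interval $[d/(1-c),\infty]$; so ``computing the cores directly'' for elementary tuples is not the triviality you suggest, and every tool you lean on for (1) --- Lemma~\ref{nonsd} and perfectness --- is unavailable there. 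This is a genuine gap, and it is avoidable: the paper's route to (1) never splits into elementary and non-elementary cases. It upgrades the containments of the limit sets to containments of the cores themselves, $C^+(\F)\subseteq\overline{U}$ and $C^-(\F)\subseteq\overline{V}\subseteq U^c$ for disjoint open multicones $U$ of $\SF$ and $V$ of $\SF^{-1}$; then any point of $C^+(\F)\cap C^-(\F)$ lies on the finite set $\partial U$, is approximated from inside $U$ by points off $C^-(\F)$ and from inside $\mathrm{int}(U^c)$ by points off $C^+(\F)$, and hence lies on both boundaries. If you want to salvage your version, you should either prove these stronger core containments from your sets $C$ and $D$ (which also makes your case analysis for (1) unnecessary), or supply an honest separate argument for elementary tuples.
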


\begin{proof}
Since $\F$ has finite rank there exists a finite union $U$ of open intervals, with disjoint closures, that is mapped strictly inside itself under $\SF$. So, $\LF(\F)$ lies in $\overline{U}$ and $\LB(\F)$ lies in the complement of $U$. Using similar arguments, we can show that there exists a finite union of open intervals $V$, disjoint from $U$, such that $\SF^{-1}$ maps $V$ strictly inside itself. Thus $C^+(\F)\subseteq\overline{U}$, and $C^-(\F)\subseteq \overline{V}\subseteq U^c$. Parts \textit{(1)} to \textit{(4)} follow easily from these facts.\\
We are now going to show that $C^+(\F)$ is mapped inside itself by $\SF$; the proof for $C^-(\F)$ is similar. Let $I$ be a connected component of $C^+(\F)$. Suppose that there exists an ordinate $f$ of $\F$ such that $f(I)$ is not contained in $C^+(\F)$, and note that parts \textit{(2)} and \textit{(3)} imply that the endpoints of $I$ are mapped into $C^+(\F)$. Hence, there exist points $x\in I$ and $y\in\LB(\F)$ such that $f(x)=y$. Therefore $x=f^{-1}(y)\in\LB(\F)$, which contradicts parts \textit{(1)} and \textit{(3)} of this lemma. So all connected components of $C^+(\F)$ are mapped into $C^+(\F)$ by each ordinate of $\F$, and so we obtain the desired result.
\end{proof}

\begin{figure}[h]
\centering
\begin{tikzpicture}[scale=3]

\begin{scope}[ decoration={
    markings,
    mark=at position 0.7 with {\arrow[scale=1.5]{To}}}
    ] 
\draw (0.,0.) circle (1.cm);
\draw[postaction={decorate}] (0.8140919207904149,0.5807360368564817)-- (-0.8140919207904148,-0.5807360368564818);
\draw[postaction={decorate}] (-0.8140919207904148,0.5807360368564818)-- (0.8140919207904149,-0.5807360368564817);
\draw [|-|,shift={(0,0)},line width=1.pt,xcol, dashed]  plot[domain=3.76122517309801:4.299286004976793,variable=\t]({1.*0.931099836939116*cos(\t r)+0.*0.931099836939116*sin(\t r)},{0.*0.931099836939116*cos(\t r)+1.*0.931099836939116*sin(\t r)});
\draw [|-|,shift={(0.,0.)},line width=1.pt,xcol, dashed]  plot[domain=3.76122517309801:4.299286004976793,variable=\t]({-1.*0.931099836939116*cos(\t r)+0.*0.931099836939116*sin(\t r)},{0.*0.931099836939116*cos(\t r)+1.*0.931099836939116*sin(\t r)});
\draw [|-|,shift={(0.,0.)},line width=1.pt, ycol, dashed]  plot[domain=3.76122517309801:4.299286004976793,variable=\t]({1.*0.931099836939116*cos(\t r)+0.*0.931099836939116*sin(\t r)},{0.*0.931099836939116*cos(\t r)+-1.*0.931099836939116*sin(\t r)});
\draw [|-|,shift={(0.,0.)},line width=1.pt, ycol, dashed]  plot[domain=3.76122517309801:4.299286004976793,variable=\t]({-1.*0.931099836939116*cos(\t r)+0.*0.931099836939116*sin(\t r)},{0.*0.931099836939116*cos(\t r)+-1.*0.931099836939116*sin(\t r)});
\draw [|-|,shift={(0.,0.)},line width=1.5pt, xcol]  plot[domain=3.76122517309801:5.663552787671369,variable=\t]({1.*1.068900163060884*cos(\t r)+0.*1.068900163060884*sin(\t r)},{0.*1.068900163060884*cos(\t r)+1.*1.068900163060884*sin(\t r)});
\draw [|-|,shift={(0.,0.)},line width=1.5pt, ycol]  plot[domain=3.76122517309801:5.663552787671369,variable=\t]({1.*1.068900163060884*cos(\t r)+0.*1.068900163060884*sin(\t r)},{0.*1.068900163060884*cos(\t r)+-1.*1.068900163060884*sin(\t r)});

\end{scope}

\begin{scope}[xshift = 3cm, decoration={
    markings,
    mark=at position 0.7 with {\arrow[scale=1.5]{To}}}
    ] 
    
\draw (0.,0.) circle (1.cm);
\draw[,postaction={decorate}] (0.,-1.) -- (0.,1.);
\draw[postaction={decorate}] (0.8140919207904149,0.5807360368564817)-- (-0.8140919207904148,-0.5807360368564818);
\draw[postaction={decorate}] (-0.8140919207904148,0.5807360368564818)-- (0.8140919207904149,-0.5807360368564817);
\draw [|-|,shift={(0.,0.)},line width=1.pt,xcol, dashed]  plot[domain=1.405165092977344:1.736427560612449,variable=\t]({1.*0.9310998369391159*cos(\t r)+0.*0.9310998369391159*sin(\t r)},{0.*0.9310998369391159*cos(\t r)+1.*0.9310998369391159*sin(\t r)});
\draw  [|-|,shift={(0.,0.)},line width=1.pt,xcol, dashed]    plot[domain=3.5000434979754327:4.299286652380387,variable=\t]({1.*0.934246755792788*cos(\t r)+0.*0.934246755792788*sin(\t r)},{0.*0.934246755792788*cos(\t r)+1.*0.934246755792788*sin(\t r)});
\draw [|-|,shift={(0.,0.)},line width=1.pt,xcol, dashed]  plot[domain=3.5000434979754327:4.299286652380387,variable=\t]({-1.*0.934246755792788*cos(\t r)+0.*0.934246755792788*sin(\t r)},{0.*0.934246755792788*cos(\t r)+1.*0.934246755792788*sin(\t r)});
\draw [|-|,shift={(0.,0.)},line width=1.pt, ycol, dashed]   plot[domain=3.5000434979754327:4.299286652380387,variable=\t]({1.*0.934246755792788*cos(\t r)+0.*0.934246755792788*sin(\t r)},{0.*0.934246755792788*cos(\t r)+-1.*0.934246755792788*sin(\t r)});
\draw  [|-|,shift={(0.,0.)},line width=1.pt, ycol, dashed]  plot[domain=3.5000434979754327:4.299286652380387,variable=\t]({-1.*0.934246755792788*cos(\t r)+0.*0.934246755792788*sin(\t r)},{0.*0.934246755792788*cos(\t r)+-1.*0.934246755792788*sin(\t r)});
\draw [|-|,shift={(0.,0.)},line width=1.pt,ycol, dashed]  plot[domain=4.546757746567137:4.878020214202242,variable=\t]({1.*0.9310998369391159*cos(\t r)+0.*0.9310998369391159*sin(\t r)},{0.*0.9310998369391159*cos(\t r)+1.*0.9310998369391159*sin(\t r)});
\draw [|-|,shift={(0.,0.)},line width=1.5pt,xcol]  plot[domain=3.5000434979754327:4.299286652380387,variable=\t]({1.*1.0689001630608839*cos(\t r)+0.*1.0689001630608839*sin(\t r)},{0.*1.0689001630608839*cos(\t r)+1.*1.0689001630608839*sin(\t r)});
\draw  [|-|,shift={(0.,0.)},line width=1.5pt,xcol]   plot[domain=3.5000434979754327:4.299286652380387,variable=\t]({-1.*1.0689001630608839*cos(\t r)+0.*1.0689001630608839*sin(\t r)},{0.*1.0689001630608839*cos(\t r)+1.*1.0689001630608839*sin(\t r)});
\draw [|-|,shift={(0.,0.)},line width=1.5pt,ycol]  plot[domain=3.5000434979754327:4.299286652380387,variable=\t]({1.*1.0689001630608839*cos(\t r)+0.*1.0689001630608839*sin(\t r)},{0.*1.0689001630608839*cos(\t r)+-1.*1.0689001630608839*sin(\t r)});
\draw [|-|,shift={(0.,0.)},line width=1.5pt,ycol]   plot[domain=3.5000434979754327:4.299286652380387,variable=\t]({-1.*1.0689001630608839*cos(\t r)+0.*1.0689001630608839*sin(\t r)},{0.*1.0689001630608839*cos(\t r)+-1.*1.0689001630608839*sin(\t r)});
\draw [|-|,shift={(0.,0.)},line width=1.5pt,ycol]  plot[domain=4.5467577465671365:4.878020214202243,variable=\t]({1.*1.0662157541469486*cos(\t r)+0.*1.0662157541469486*sin(\t r)},{0.*1.0662157541469486*cos(\t r)+1.*1.0662157541469486*sin(\t r)});
\draw [|-|,shift={(0.,0.)},line width=1.5pt,xcol]  plot[domain=4.5467577465671365:4.878020214202243,variable=\t]({1.*1.0662157541469486*cos(\t r)+0.*1.0662157541469486*sin(\t r)},{0.*1.0662157541469486*cos(\t r)+-1.*1.0662157541469486*sin(\t r)});
\end{scope}

\end{tikzpicture}
\caption{Examples of cores. The dashed lines indicate the limit sets, while the continuous lines the cores.}\label{corefig}
\end{figure}
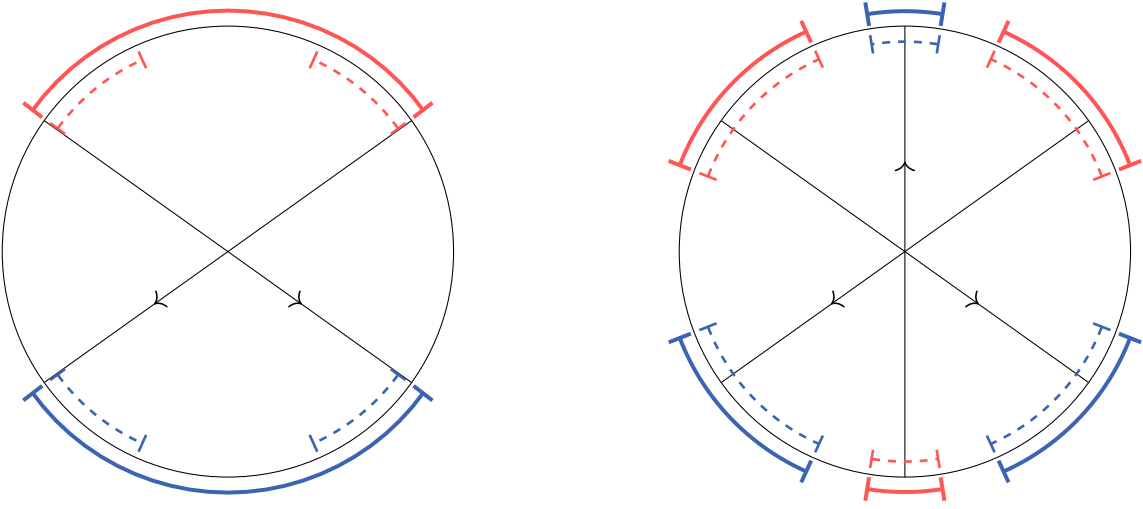

It is interesting to note that Lemma \ref{corelemma} shows that if $\F$ has rank $m$ then the sets $C^+(\F)$ and $C^-(\F)$ are $m$-connected. 

Using the properties of the cores we prove that points on the boundary of non-principal components have finite rank (recall that they are semidiscrete and inverse-free by Theorem~\ref{cl}). This can be thought of as an extension of \cite[Proposition 4.9]{AvBoYo2010}, which states that if $\F$ lies on the boundary of a non-principal component then it is inverse-free.

\begin{theorem}\label{npsch}
Suppose that $\F$ lies on the boundary of a non-principal component $H$ of $\HY$. Then the rank of $\F$ is finite and greater than one.
\end{theorem}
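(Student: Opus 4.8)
The plan is to feed $\F$ into the closure theorem and then run a limiting argument on multicones. Since $\F$ lies on the boundary of the non-principal component $H$, there is a sequence $(\F_m)$ in $H\subseteq\HY\subseteq\SD$ with $\F_m\to\F$. By the isolation of the principal component noted after Theorem~\ref{pr}, $\overline{H}\cap\overline{H_P}=\emptyset$, so $\F\notin\overline{H_P}$; and as each $\F_m\in\SD\setminus\overline{H_P}$, Theorem~\ref{cl} gives $\F\in\SD\setminus\overline{H_P}$. Thus $\F$ is already known to be semidiscrete and inverse-free, and what remains is (i) that its rank is finite and (ii) that it exceeds one. The statement ``rank greater than one'' is immediate once finiteness is known: a rank-one tuple maps a single open interval $J$ (strictly) inside itself, so $\SF\subseteq\mathcal{M}(J)$ and Lemma~\ref{mj} forces $\F\in\overline{H_P}$, a contradiction. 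So the real content is finiteness.

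For finiteness I would exploit that the rank is constant, say equal to $r\geq 2$, on the connected component $H$ (the number of components of the forward core $C^+(\cdot)$ is locally constant on $\HY$ by Lemma~\ref{corelemma} together with the persistence of multicones under perturbation). Hence each $\F_m$ carries a multicone $M_m=\bigcup_{j=1}^{r}(a_j^m,b_j^m)$ with $\overline{f_i^m(M_m)}\subseteq M_m$ for every ordinate. Passing to a subsequence, the endpoints converge, so $\overline{M_m}$ converges in the Hausdorff metric to a finite union of closed intervals $K=\bigcup_{j=1}^{r}[a_j,b_j]$ (some possibly degenerate). Using $f_i^m\to f_i$ locally uniformly together with $f_i^m(\overline{M_m})\subseteq\overline{M_m}$, a routine limit passage yields $f_i(K)\subseteq K$ for each ordinate $f_i$ of $\F$. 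Set $C=K^{\mathrm{o}}$, the interior of $K$; this is a finite union of at most $r$ open intervals with pairwise disjoint closures, and since each $f_i(C)$ is open and contained in $K$ we get $f_i(C)\subseteq C$.

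It then remains to show that $C$ has at least two components and that the inclusions $f_i(C)\subseteq C$ are proper; this yields a witness of finite rank and, again, rank greater than one. First I would rule out that $\F$ is elementary: a common fixed point in $\mathbb{H}$ is impossible because inverse-freeness and semidiscreteness exclude elliptics and the identity, while a common fixed point (or setwise-fixed pair) in $\overline{\R}$ leads, after conjugating that point to $\infty$ and writing the ordinates as affine maps, to an invariant half-line mapped strictly inside itself by every ordinate, hence to rank one and $\overline{H_P}$ via Lemma~\ref{mj} --- a contradiction. Thus $\F$ is non-elementary, so by Lemma~\ref{nonel} $\LF(\F)$ is infinite; since the attracting fixed points of hyperbolic elements of $\SF$ are limits of those of $\langle\F_m\rangle$, we have $\LF(\F)\subseteq K$, so $K$ contains infinitely many points and $C\neq\emptyset$. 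If $C$ were a single interval then $\SF\subseteq\mathcal{M}(C)$ and Lemma~\ref{mj} again gives $\F\in\overline{H_P}$, excluded; hence $C$ has $k\geq 2$ components, so $\partial C$ consists of $2k\geq 4$ points. If some $f_i(C)=C$, then $f_i(\overline{C})=\overline{C}$ and $f_i$ permutes these $2k\geq 3$ points; but $f_i$ is hyperbolic or parabolic (inverse-freeness again), hence of infinite order with at most two fixed points in $\overline{\R}$, so it can fix no such finite set. Therefore $f_i(C)\subsetneq C$ for every ordinate, and $C$ witnesses that $\F$ has finite rank at most $r$, and rank at least $2$.

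I expect the main obstacle to be controlling the degeneration of the multicones in the limit: guaranteeing that $K$ neither collapses to a finite point set nor reduces to a single interval, and that closed invariance of $\overline{C}$ upgrades to the strict (proper) invariance demanded by Definition~\ref{sch}. The decisive inputs here are the exclusion of the elementary case (so that $\LF(\F)$ is infinite and $C$ is genuinely nondegenerate) and the fixed-point count for hyperbolic and parabolic maps (which converts $f_i(C)\subseteq C$ into $f_i(C)\subsetneq C$); identifying every degenerate limit with $\overline{H_P}$ through Lemma~\ref{mj} is what makes these steps close. A secondary technical point, underlying the uniform bound $r$ on the number of multicone intervals, is the constancy of the rank on the component $H$.
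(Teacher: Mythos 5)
Your overall strategy mirrors the paper's: approximate $\F$ by tuples in $H$, pass the invariant finite unions of intervals to a Hausdorff limit, and rule out the degenerate outcomes (empty, single interval, non-strict invariance) via Lemma~\ref{mj}, non-elementarity and a fixed-point count. The paper works with the cores $C^{\pm}(\F_m)$ rather than with multicones and obtains their Hausdorff convergence from \cite[Proposition 4.13]{AvBoYo2010}. Measured against that, your first soft spot is the uniform bound $r$ on the number of multicone intervals along the sequence: you assert that the rank is constant on $H$, citing Lemma~\ref{corelemma} ``together with the persistence of multicones under perturbation''. Persistence of a multicone only gives upper semicontinuity of the rank (a multicone for $\F_m$ still works for nearby tuples); it does not by itself prevent the minimal number of intervals from growing along a sequence in $H$, and without a uniform bound your limit $K$ need not be a \emph{finite} union of intervals. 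This control is genuinely needed and is precisely what the paper imports from \cite[Proposition 4.13]{AvBoYo2010}; you should either invoke that result or prove the bound.

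The second, more serious, problem is your exclusion of the elementary case. You claim that if all ordinates fix a common point of $\overline{\mathbb{R}}$ then, written as affine maps, they preserve a half-line and hence have rank one. This implication is false: Example~\ref{f0} of the paper ($g_1(z)=2z+1$, $g_2(z)=z/3$, $g_3(z)=5z-4$) is a triple of affine maps fixing $\infty$ with no common invariant half-line and rank not equal to one. What you actually need is that an \emph{elementary, semidiscrete and inverse-free} tuple has rank one, and semidiscreteness must enter essentially; the natural route is through Lemma~\ref{hump}, which shows that if the repelling fixed points of expanding elements sit on both sides of the attracting fixed points of contracting elements, then $\overline{\SF}$ contains translations of both signs, hence the identity, contradicting $\F\in\SD$. (The paper's own proof asserts ``$\F\notin\overline{H_P}$ implies $\F$ is non-elementary'' with no more justification, so this is a shared gap; but the specific reason you give is the one refuted by the paper's own example.) The remaining steps --- $\LF(\F)\subseteq K$ so $C=K^{\mathrm{o}}\neq\emptyset$, the single-interval exclusion via Lemma~\ref{mj}, and the conversion of $f_i(C)\subseteq C$ into $f_i(C)\subsetneq C$ by counting the $2k\geq 4$ boundary points against the at most two fixed points of a power of a hyperbolic or parabolic map --- are correct.
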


\begin{proof}
Observe that $\F$ lies in $\partial H\subset\SDP$ and thus it is semidiscrete and inverse-free due to Theorem~\ref{cl}. Also, because $\F$ does not lie in the closure of the principal component, if $\F$ had finite rank, that would have to be greater than one. Let $\F_n\in H$ be a sequence of $N$-tuples with $\F_n\to \F$ and define $C_n^+=C^+(\F_n)$ and $C_n^-=C^-(\F_n)$. Under these assumptions \cite[Proposition 4.13]{AvBoYo2010} states that $C_n^+$ and $C_n^-$ converge, in the Hausdorff metric, to some sets $C^+$ and $C^-$ that, by continuity, are finitely connected, nonempty subsets of $\overline{\mathbb{R}}$. We are going to prove that $C^+$ is mapped strictly inside itself by each ordinate of $\F$, and that it is a union of nontrivial closed intervals, none of which are singletons.\\
First, take $x\in C^+$ and let $f$ be an ordinate of $\F$. There exist sequences of points $(x_n)$ in $C_n^+$ and ordinates $(f_n)$ of $\F_n$, such that $x_n\to x$ and $f_n\to f$, as $n\to \infty$. Conjugating by a M\"obius transformation we can assume that $x=1$ and $f(z)=\lambda z$, for some positive $\lambda\in\mathbb{R}$. Also, without loss of generality, we assume that $x_n\leq1$, for all $n$. Applying the mean value theorem to the interval $(x_n,1)$, for some $n$, we obtain that there exists $\xi_n\in (x_n,1)$, such that 
\[
|f_n(1)-f_n(x_n)|= |f_n'(\xi_n)| |1-x_n|.
\]
But $f_n'(\xi_n)$ converges to $\lambda$, and so $f_n(x_n)$ converges to the point $f(1)$, as $n\to\infty$. Since $\F_n$ lie in a non-principal hyperbolic component, they have rank greater than one. Hence, Lemma~\ref{corelemma} implies that $C_n^+$ is forward invariant under $\langle \F_n \rangle$. So the points $f_n(x_n)$ lie in $C_n^+$, for all $n$. Thus, by the convergence of $C_n^+$ to $C^+$, we have that $f(1)\in C^+$. We conclude that $C^+$ is mapped inside itself by each element of $\langle \F \rangle$.\\
Assume now that $C^+$ has a connected component $\{x\}$, for some $x\in\overline{\mathbb{R}}$. The second part of Lemma~\ref{corelemma} implies that $x$ lies in $\LF(\F)$. Because $\F$ does not lie in the closure of the principal component, it is non-elementary. So Lemma~\ref{nonel} implies that $\LF(\F)$ is the closure of all attracting fixed points of $\SF$. Thus we have that $x=\alpha(h)$, for some hyperbolic transformation $h\in\SF$. But $C^+$ is mapped inside itself by $\SF$ and so $C^+=\{x\}$. Thus $\F$ has to lie in $\overline{H_P}$, which is a contradiction.\\
Using similar arguments we can prove that $C^-$ is mapped inside itself by each element of $\SF^{-1}$, and does not contain any connected components that are singletons. Hence, the set $C^+$ cannot be the extended real line, because otherwise the convergence of $C_n^+$ and $C_n^-$ would imply that $C^-$ is a finite subset of $\overline{\mathbb{R}}$.\\
We will now prove that $\SF$ maps $C^+$ strictly inside itself. Suppose that there exists an ordinate $g$ of $\F$ that fixes $C^+$. Since $C^+$ cannot be empty, the extended real line or have components that are singletons, $C^+$ has to be an interval. Then $\SF$ lies in $\mathcal{M}(C^+)$, which by Lemma~\ref{mj} implies that $\F$ lies in the closure of the principal component, a contradiction.\\
We conclude that $C^+$ is a union of closed, disjoint intervals that is mapped strictly inside itself by $\SF$, and thus $\F$ has finite rank.
\end{proof}

It is not known whether the inverse of Theorem~\ref{npsch} holds (see Question \ref{imp2} in the final section). We do, however, have the following lemma about the structure of the limit sets of $N$-tuples with finite rank. This result is, in some sense, an analogue of \cite[Lemma 4.12]{AvBoYo2010}, and the proof of the two lemmas are similar.

\begin{lemma}\label{poi}
Suppose that $\F=(f_1,f_2,\dots ,f_N)\in\SD$ has finite rank and $\LF(\F)\cap\LB(\F)\neq\emptyset$. Then for each point $x\in \LF(\F)\cap\LB(\F)$ there exist points $a,b\in\LF(\F)\cap\LB(\F)$ and transformations $f,g\in \SF\cup\{\mathrm{Id}\}$ such that $x=f(a)=g^{-1}(b)$. In addition, $a$ is either an attracting or parabolic fixed point and $b$ is either a repelling or parabolic fixed point of transformations in $\SF$.
\end{lemma}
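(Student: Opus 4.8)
The plan is to prove the existence of the point $a$ and the transformation $f$ with the stated properties; the statement for $b$ and $g$ then follows by applying the result to the inverse tuple $\F^{-1}=(f_1^{-1},\dots,f_N^{-1})$. Indeed, since $\SF^{-1}=\langle \F^{-1}\rangle$, the tuple $\F^{-1}$ is again semidiscrete and inverse-free and of finite rank (with core $C^-(\F)$), while $\LF(\F^{-1})=\LB(\F)$ and $\LB(\F^{-1})=\LF(\F)$. An attracting or parabolic fixed point $a'$ of $\SF^{-1}$ with $x=f'(a')$, $f'\in\SF^{-1}\cup\{\mathrm{Id}\}$, is then a repelling or parabolic fixed point $b:=a'$ of $\SF$ with $x=g^{-1}(b)$, where $g=(f')^{-1}\in\SF\cup\{\mathrm{Id}\}$. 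We may also assume $\F$ is non-elementary, since in the elementary case the limit sets are contained in a finite orbit and each point of $\LF(\F)\cap\LB(\F)$ is directly seen to be a fixed point of the required type.

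I would first record the geometric setup. As $\F$ has finite rank, Lemma~\ref{corelemma} provides the finitely connected cores $C^+(\F)$ and $C^-(\F)$ and gives $\LF(\F)\subseteq C^+(\F)$, $\LB(\F)\subseteq C^-(\F)$, $\partial C^+(\F)\subseteq\LF(\F)$, $\partial C^-(\F)\subseteq\LB(\F)$, the forward invariance of $C^+(\F)$ under $\SF$, and $C^+(\F)\cap C^-(\F)=\partial C^+(\F)\cap\partial C^-(\F)$. Consequently the set $E:=\LF(\F)\cap\LB(\F)$ is contained in this common boundary, hence is finite, and every $x\in E$ is an endpoint of a component of $C^+(\F)$.

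The key construction is a pull-back chain inside the finite set $E$. Since every attracting fixed point $\alpha(u)$ of $u=f_{i_1}\cdots f_{i_k}\in\SF$ satisfies $\alpha(u)=f_{i_1}(f_{i_2}\cdots f_{i_k}(\alpha(u)))\in f_{i_1}(C^+(\F))$ by forward invariance, Lemma~\ref{nonel} yields $\LF(\F)\subseteq\bigcup_{j}f_j(C^+(\F))$, the right-hand side being closed. Thus each $x\in E$ lies in some $f_j(C^+(\F))$; because $x\in\partial C^+(\F)$ has a gap of $C^+(\F)$ on one side it cannot be interior to the image interval, so $x=f_j(x')$ for an endpoint $x'$ of $C^+(\F)$. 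Backward invariance of $\LB(\F)$ gives $x'=f_j^{-1}(x)\in\LB(\F)$, while $x'\in\partial C^+(\F)\subseteq\LF(\F)$; hence $x'\in E$. Iterating produces an infinite chain $x=x_0,x_1,\dots$ in $E$ with $x_i=f_{j_{i+1}}(x_{i+1})$, so by finiteness of $E$ some $x_s=x_t$ with $s<t$. Setting $w=f_{j_{s+1}}\cdots f_{j_t}\in\SF$ and $v=f_{j_1}\cdots f_{j_s}\in\SF\cup\{\mathrm{Id}\}$, we obtain $a:=x_s$ with $w(a)=a$ and $x=v(a)$. As $\F$ is semidiscrete and inverse-free, $\SF$ contains neither the identity nor any elliptic element, so $w$ is hyperbolic or parabolic.

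It remains to verify that $a$ can be taken to be an attracting or parabolic fixed point, and this is the main obstacle. If $w$ is parabolic, $a$ is its parabolic fixed point and we are done. If $w$ is hyperbolic, let $\overline I=[a,a']$ be the component of $C^+(\F)$ with endpoint $a$; the containments along the cycle give $w(\overline I)\subseteq\overline I$ with $w(a)=a$, so by compactness the attracting fixed point $\alpha(w)$ lies in $\overline I$, and if $\alpha(w)=a$ we are done. The delicate sub-case is $\alpha(w)\in(a,a']$, i.e. $a=\beta(w)$ is repelling; here I would argue that this configuration is incompatible with semidiscreteness. Concretely, $a\in\LF(\F)$ forces a sequence of attracting fixed points approaching the repelling fixed point $a=\beta(w)$ from inside $\overline I$, and combining such elements with high powers of $w$ (which lie in the semigroup) produces attracting fixed points filling an interval, so that $\LF(\F)^{\mathrm{o}}\cap\LB(\F)\neq\emptyset$; Lemma~\ref{nonsd} then contradicts semidiscreteness. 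Making this last step precise—controlling the interplay of $w$ and the approaching elements on the component $\overline I$, in the spirit of \cite[Lemma~4.12]{AvBoYo2010}—is where the real work lies; once the repelling sub-case is excluded, $a=\alpha(w)$ is the required attracting fixed point and the statement follows.
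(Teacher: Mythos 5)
Your skeleton matches the paper's: reduce to the point $a$ (the statement for $b$ follows by passing to $\F^{-1}$), use Lemma~\ref{corelemma} to see that $E=\LF(\F)\cap\LB(\F)$ is a finite subset of $\partial C^+(\F)\cap\partial C^-(\F)$, build a pull-back chain inside $E$, extract a cycle by pigeonhole, and obtain $w\in\SF$ with $w(a)=a$ and $x=v(a)$, where $w$ is hyperbolic or parabolic because $\SF$ contains no elliptic elements and no identity. Your route to the pull-back step (via $\LF(\F)\subseteq\bigcup_j f_j(C^+(\F))$ and the fact that a boundary point of $C^+(\F)$ cannot be interior to an image interval) differs cosmetically from the paper, which instead writes an orbit $k_n(i)\to x$ as $k_n=f_ih_n$ and pulls back directly from the definition of the limit set; note that the paper's version does not need Lemma~\ref{nonel}, whereas yours does, and your dismissal of the elementary case is too quick --- elementary semigroups in $\SD$ can have limit sets that are nondegenerate intervals (Lemma~\ref{limitset}), so $E$ is not ``contained in a finite orbit'' for the reason you give.

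The genuine gap is the final step, which you yourself flag as ``where the real work lies'': excluding the sub-case $a=\beta(w)$ with $\alpha(w)$ in the interior of the component $\overline{I}=[a,a']$. Your proposed mechanism --- attracting fixed points $q_n\to a$ inside $\overline{I}$, pushed forward by powers of $w$, ``filling an interval'' so that Lemma~\ref{nonsd} applies --- does not work as stated: conjugating so that $w(z)=\lambda z$ with $a=0$, the set $\{w^m(q_n)\}$ for $q_n=\lambda^{-n}q_0$ is just the discrete orbit $\{\lambda^{m-n}q_0\}$, so closedness and forward invariance of $\LF(\F)$ together with $q_n\to\beta(w)$ give no interval in $\LF(\F)$ and hence no contradiction via $\LF(\F)^{\mathrm{o}}\cap\LB(\F)\neq\emptyset$. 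Some additional input is required. The paper closes this step by a different device: it tracks the components $D_n$ of $C^+(\F)$ containing the intermediate points of the cycle, uses $g_n^{-1}(D_n)\subseteq D$ to deduce $D\subseteq g_k(D)$ (equivalently $w(D)\subseteq D$ for $w=g_k^{-1}$), and then argues from this containment: if $g_k(D)=D$ the situation reduces to the rank-one case, and if $D\subsetneq g_k(D)$ the fixed endpoint $a$ of the expanding map $g_k$ is read off as repelling for $g_k$, hence attracting (or parabolic) for $w\in\SF$. In other words, the decisive information comes from how the cycling element acts on the core component $D$, not from a density argument in the limit set; without either that argument or a corrected version of yours, the lemma is not proved.
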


\begin{proof}
Because $\F$ has finite rank, Lemma~\ref{corelemma} implies that $C^+(\F)$ and $C^-(\F)$ are finitely connected and $\LF(\F)\cap\LB(\F)=\partial C^+(\F)\cap \partial C^-(\F)$. Thus the intersection $\LF(\F)\cap\LB(\F)$ is finite. Note that if $C^+(\F)$ or $C^-(\F)$ is a single interval (i.e. $\F$ has rank one) then the result is obvious (see, for example, \cite[Proposition 4]{Yo2004}).\\
Let $x\in \LF(\F)\cap\LB(\F)$. Since $x$ lies in the forward limit set of $\SF$, there exists a sequence $(k_n)$ in $\SF$ such that $k_n(i)\to x$, as $n\to \infty$. Because the semigroup $\SF$ is generated by a finite collection of M\"obius transformations, by passing to a subsequence if necessary, we can write $k_n$ as $k_n=f_ih_n$, for some ordinate $f_i$ of $\F$ and some sequence $(h_n)$ in $\SF$. Then $h_n=f_i^{-1}k_n$, for all $n$, and so $(h_n(i))$ converges to $f_i^{-1}(x)\in\overline{\mathbb{R}}$. Thus, because $h_n\in\SF$, for all $n$, the point $f_i^{-1}(x)$ lies in the forward limit set of $\SF$. But $x$ also lies in $\LB(\F)$, which is a backward invariant set, and so $f_i^{-1}(x)$ also lies in the backward limit set of $\SF$.\\
Hence, for every $x\in\LF(\F)\cap\LB(\F)$ there exists an ordinate $f_i$ of $\F$ such that $f_i^{-1}(x)\in\LF(\F)\cap\LB(\F)$. Arguing similarly, we can show that there exists an ordinate $f_j$ of $\F$ such that $f_j(x)\in\LF(\F)\cap\LB(\F)$. Iterating these processes, we obtain sequences $(i_n),(j_n)\subset \{1,2,\dots , N\}$ such that the points 
\begin{align*}
&x_n=f^{-1}_{i_n}f^{-1}_{i_{n-1}}\dots f^{-1}_{i_1}(x) &\text{and}& &y_n=f_{j_n}f_{j_{n-1}}\dots f_{j_1}(x),
\end{align*}
lie in $\LF(\F)\cap\LB(\F)$, for all $n$. Since $\LF(\F)\cap\LB(\F)$ is finite, there exist positive integers $k,m$ and $\lambda,\nu$ such that $x_m=x_{m+k}$ and $y_\nu=y_{\nu+\lambda}$. Hence, the transformation $f^{-1}_{i_{m+k}}f^{-1}_{i_{m+k-1}}\dots f^{-1}_{i_{m+1}}$ fixes the point $x_m=f^{-1}_{i_m}f^{-1}_{i_{m-1}}\dots f^{-1}_{i_1}(x)$ and similarly $f_{j_{\nu+\lambda}}f_{j_{\nu+\lambda-1}}\dots f_{j_{\nu+1}}$ fixes $y_\nu=f_{j_\nu}f_{j_{\nu-1}}\dots f_{j_1}(x)$. Also
\[
x=f_{i_1}f_{i_2}\dots f_{i_m}(x_m)=f^{-1}_{j_1}f^{-1}_{j_2}\dots f^{-1}_{j_\nu}(y_\nu).
\]
We are now going to show that $x_m$ is either an attracting or a parabolic fixed point; the proof for $y_\nu$ is similar. For simplicity we write $g_n=f^{-1}_{i_{m+n}}f^{-1}_{i_{m+n-1}}\dots f^{-1}_{i_{m+1}}$, for $n=1,2,\dots, k$. Recall that the point $x_m$ lies on the boundary of $C^+$, and let $D$ be the unique connected component of $C^+$ that contains $x_m$. In addition, $g_n(x_m)\in \LF(\F)\cap\LB(\F)$ for all $n=1,2,\dots, k$, and we define $D_n$ to be the component of $C^+$ that contains $g_n(x_m)$. Observe that $g_k(x_m)=x_m$ and so $D_k=D$. Because the forward core of $\F$ is mapped inside itself by all $f\in\SF$, we have that each such transformation $f$ maps the interior of $C^+$ into itself. Thus, because $g_n^{-1}$ lies in $\SF$, we have that $g_n^{-1}(D_n)\subseteq D$ for all $n=1,2,\dots, k$. Since M\"obius transformations are conformal, this is equivalent to $D_n\subseteq g_n(D)$, for all $n=1,2,\dots, k$. In particular, we have that $D_k=D\subseteq g_k(D)$. Suppose that $g_k(D)=D_k=D$. Then the forward core $C^+$ is the interval $D$ and, as we mentioned in the beginning of the proof, the result is obvious. So, assume that $D\subsetneq g_k(D)$. Recalling that $g_n$ fixes $x_m$, which is an endpoint of $D$, we conclude that $x_m$ is either the repelling (or the unique) fixed point of $g_k$, and hence the attracting (or unique) fixed point of $g_k^{-1}=f_{i_{m+1}}f_{i_{m+2}}\dots f_{i_{m+k}}$.
\end{proof}

\section{A counterexample to Question~\ref{Q}}\label{counter}

We are now ready to present our example that answers Question~\ref{Q} in the negative. Recall from the indtroduction that $\EL$ is the set of all $\F\in\PSL^N$ such that $\SF$ contains either an elliptic transformation or the identity function. Also recall that the semidiscrete and inverse-free locus is disjoint from the elliptic locus.

We start by presenting an example for the case $N=3$, which we will then extend to any $N\geq 3$.

\begin{example}\label{f0}
Suppose that $N=3$ and consider the following hyperbolic transformations: $g_1(z)=2z+1$, $g_2(z)=\tfrac{1}{3}z$ and $g_3(z)=5z-4$ (see Figure~\ref{3} for the axes of the transformations $g_1,g_2$ and $g_3$). Observe that the triplet $\F_0=(g_1,g_2,g_3)$ is elementary, since $g_1,g_2$ and $g_3$ all fix the point at infinity. It is easy to check that any element in $\langle \F_0 \rangle$ is of the form $\lambda z+ \kappa$, where $\lambda=2^l3^{-m}5^n$, for some $l,m,n\in\mathbb{N}\cup\{0\}$, and some real number $\kappa$. Note that $l,m$ and $n$ cannot all be 0 simultaneously, and so because $2,3$ and $5$ are prime numbers, $\lambda$ cannot be 1. Thus all the transformations in $\langle \F_0 \rangle$ are hyperbolic, which implies that the semigroup $\langle \F_0 \rangle$ does not contain any elliptic transformations or the identity. Hence, the triplet $\F_0$ lies in the complement of $\EL$ in $\PSL^3$.\\
We claim that $\F_0$ is not semidiscrete. Note that due to Lemma~\ref{limitset} the limit set of the pair $(g_1,g_2)$ is the closed interval $[0,\infty]$. Also, the repelling fixed point of $g_3$ is $1$ which lies in the interior of $[0,\infty]=\LF(g_1,g_2)$. Hence, Lemma~\ref{ls-inter} is applicable and yields that either $\langle \F_0 \rangle$ is a discrete group, or $\F_0$ is not semidiscrete. But $\F_0$ is inverse-free, so the semigroup it generates cannot be a group; this proves our claim. \qed
\end{example}

\begin{figure}[ht]
\centering
\begin{tikzpicture}

\begin{scope}[scale = 3, decoration={
    markings,
    mark=at position 0.6 with {\arrow{To}}}]
\draw [line width=0.4pt] (0.,0.) circle (1.cm);
\draw [line width=.4pt,postaction={decorate}] (0.,1.)-- (0.,-1.);
\end{scope}

\begin{scope}[scale = 3, decoration={
    markings,
    mark=at position 0.4 with {\arrow{To}}}]
\draw [shift={(-1.6159487086371378,1.)},line width=0.4pt,postaction={decorate}]  plot[domain=-1.1083025081473314:0.,variable=\t]({1.*1.6159487086371378*cos(\t r)+0.*1.6159487086371378*sin(\t r)},{0.*1.6159487086371378*cos(\t r)+1.*1.6159487086371378*sin(\t r)});
\draw [shift={(1.6159487086371385,1.)},line width=0.4pt,postaction={decorate}]  plot[domain=4.249895161737125:3.141592653589793,variable=\t]({1.*1.6159487086371385*cos(\t r)+0.*1.6159487086371385*sin(\t r)},{0.*1.6159487086371385*cos(\t r)+1.*1.6159487086371385*sin(\t r)});

\node[above] at (0,1) {$\infty$};
\node[below] at (0,-1) {$0$};
\node[ below left] at (-0.89, -0.45) {$-1$};
\node[below right] at (0.89, -0.45) {$1$};
\node[left] at (0,-.2) {$g_2$};
\node[left] at (-0.37,0.05) {$g_1$};
\node[right] at (0.37,0.05) {$g_3$};

\end{scope}
\end{tikzpicture}
\caption{The 3-tuple $\F_0=(g_1,g_2,g_3)$}\label{3}
\end{figure}
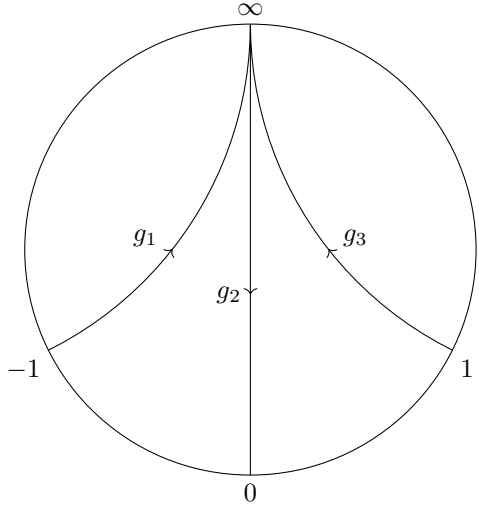

In order to show that Example~\ref{f0} answers Question~\ref{Q} in the negative, we need to prove that $\F_0$ lies in the complement of $\overline{\HY}$. Note that because $\HY\subset \SD$, it suffices to prove that $\F_0$ lies in the complement of $\overline{\SD}$.\\
Recall that the set $\SDP$ is closed, as stated by Theorem~\ref{cl}. Therefore, because $ \F_0$ is not semidiscrete, if $\F_0$ were to lie in $\overline{\SD}$, it would have to lie in the closure of the principal component. The following lemma tells us that this cannot happen, showing that $\F_0\in\overline{\HY}^c$.

\begin{lemma}
The triplet $\F_0$ in Example~\ref{f0} lies in the complement of $\overline{H_P}$.
\end{lemma}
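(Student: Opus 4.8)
The plan is to argue by contradiction, exploiting the fact that a member of the principal component carries a \emph{single}-interval multicone, whose two endpoints must separate the attracting fixed points of the generators from the repelling ones. Suppose then that $\F_0\in\overline{H_P}$, and choose a sequence $\F_n=(g_1^n,g_2^n,g_3^n)$ in $H_P$ with $\F_n\to\F_0$. Since every point of $H_P$ is a uniformly hyperbolic $N$-tuple of rank one, each $\F_n$ admits an open arc $J_n\subset\overline{\R}$ with $g_i^n(\overline{J_n})\subset J_n$ for $i=1,2,3$. The first step is to locate the fixed points of the generators relative to $J_n$. Iterating the inclusion $g_i^n(\overline{J_n})\subset J_n$ shows that the attracting fixed point $\alpha(g_i^n)$ lies in $J_n$; passing to complements shows that the complementary arc $J_n^c$ is mapped into itself by $(g_i^n)^{-1}$, whence the repelling fixed point $\beta(g_i^n)$ lies in $J_n^c$.

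Next I would pass to the limit. The ordinates $g_1,g_2,g_3$ are hyperbolic with $\alpha(g_1)=\alpha(g_3)=\infty$, $\alpha(g_2)=0$, $\beta(g_1)=-1$, $\beta(g_2)=\infty$ and $\beta(g_3)=1$. As hyperbolicity is an open condition and fixed points depend continuously on the map, for large $n$ each $g_i^n$ is hyperbolic and its fixed points converge to those of $g_i$. In particular $\alpha(g_1^n)\to\infty$ from inside $J_n$ while $\beta(g_2^n)\to\infty$ from inside $J_n^c$, so $\infty$ is a limit of endpoints of $J_n$. After passing to a subsequence the two endpoints of $J_n$ converge in $\overline{\R}$; the relations $\alpha(g_2^n)\to 0\in J_n$ and $\beta(g_1^n)\to-1$, $\beta(g_3^n)\to 1\in J_n^c$ rule out both endpoints tending to $\infty$, since otherwise one of $J_n,J_n^c$ would degenerate to $\{\infty\}$ and could not contain $0$, respectively $\{-1,1\}$, in its limit. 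Hence exactly one endpoint tends to $\infty$ and the other to some finite $p\in\R$, so $\lim J_n$ is one of the two arcs bounded by $p$ and $\infty$.

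Finally I would read off the contradiction from the position of $p$. If $\lim J_n=\{x>p\}$, then $0\in\overline{J}$ forces $p\le 0$ while $1\in\overline{J^c}$ forces $p\ge 1$; if instead $\lim J_n=\{x<p\}$, then $0\in\overline{J}$ forces $p\ge 0$ while $-1\in\overline{J^c}$ forces $p\le -1$. Both alternatives are impossible, which proves $\F_0\notin\overline{H_P}$. Conceptually, the obstruction is exactly that $\infty$ is attracting for $g_1,g_3$ but repelling for $g_2$, which pins one endpoint of the multicone at $\infty$, while the attracting point $\alpha(g_2)=0$ is trapped between the repelling points $\beta(g_1)=-1$ and $\beta(g_3)=1$; no single interval anchored at $\infty$ can then keep all attracting fixed points inside and all repelling fixed points outside.

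The main technical difficulty is controlling the degeneration of the intervals $J_n$ in the limit: they may shrink, expand, or drift, so rather than invoking Hausdorff convergence of the arcs I would track their endpoints directly and feed the limiting fixed-point positions into explicit inequalities for $p$. The secondary point requiring care is the elementary fixed-point lemma of the first paragraph—that $\alpha(g_i^n)\in J_n$ and $\beta(g_i^n)\in J_n^c$—but this follows routinely from $g_i^n(\overline{J_n})\subset J_n$ together with the bijectivity of $g_i^n$ on $\overline{\R}$.
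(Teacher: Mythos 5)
Your proof is correct and is essentially the argument of the paper: both hinge on the observation that the interlocking configuration of fixed points of $g_1,g_2,g_3$ (with $\alpha(g_2)=0$ trapped between $\beta(g_1)=-1$ and $\beta(g_3)=1$, and $\infty$ attracting for $g_1,g_3$ but repelling for $g_2$) is incompatible with a single interval containing all attracting and excluding all repelling fixed points, i.e.\ with rank one. The paper packages this as an explicit open neighbourhood of $\F_0$ in which no triple can have rank one, whereas you run the equivalent sequential-compactness argument tracking the endpoints of the multicones $J_n$; your version spells out the endpoint inequalities that the paper leaves to the reader.
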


\begin{proof}
It suffices to show that there exists an open neighbourhood of $\F_0$ in $\PSL^3$ that is disjoint from the principal component $H_P$. Take $\epsilon>0$ and define the open intervals $U_{-1}=\left(-1-\epsilon, -1+\epsilon\right)$, $U_0=\left(-\epsilon,\epsilon\right)$ and $U_1=\left(1-\epsilon, 1+\epsilon\right)$. Also define $U_{\infty}=[\infty,-1/\epsilon)\cup (1/\epsilon,\infty]$, which is a neighbourhood of $\infty$ in $\overline{\mathbb{R}}$. We can choose $\epsilon$ small enough such that the sets $U_{-1},U_0,U_1$ and $U_{\infty}$ have pairwise disjoint closures. Recall that the set of hyperbolic transformations in $\PSL$ is open. So there exists an open neighbourhood $D$ of $\F_0$ in $\PSL^3$, such that if $(f_1,f_2,f_3)$ lies in $D$, then $f_1, f_2$ and $f_3$ are hyperbolic transformations. We can also choose $D$ small enough so that for all $(f_1,f_2,f_3)\in D$ we have $\beta(f_1)\in U_{-1},\, \alpha(f_2)\in U_0,\, \beta(f_3)\in U_1$ and $\alpha(f_1),\beta(f_2),\alpha(f_3)\in U_{\infty}$ (see Figure~\ref{nhd}). Hence, by the configuration of the axes of $f_1,f_2$ and $f_3$ we see that $(f_1,f_2,f_3)$ cannot have rank one, and this implies that $D$ does not intersect $H_P$, as required.
\end{proof}

\begin{figure}[ht]
\centering
\begin{tikzpicture}

\draw (-5,0) -- (5,0);
\draw[line width=2pt] (-5,0) -- (-4,0);
\draw[line width=2pt] (4,0) -- (5,0);
\draw[line width=2pt] (-2.25,0) -- (-1.75,0);
\draw[line width=2pt] (-.25,0) -- (.25,0);
\draw[line width=2pt] (1.75,0) -- (2.25,0);

\draw[directed] (-2,0) arc (0:150:2)node[pos=0.45,above, yshift=1pt]{$f_1$};
\draw[directed] (2,0) arc (180:30:2)node[pos=0.45,above, yshift=1pt]{$f_3$};
\draw[directed] (0,2) -- (0,0) node[pos=0.5,right]{$f_2$};

\node[below] at (-4,0) {$-\frac{1}{\epsilon}$};
\node[below] at (4,0) {$\frac{1}{\epsilon}$};
\node[below] at (-2,0) {$U_{-1}$};
\node[below] at (0,0) {$U_0$};
\node[below] at (2,0) {$U_1$};
\end{tikzpicture}
\caption{An open neighbourhood $U$ of $\F_0$}\label{nhd}
\end{figure}
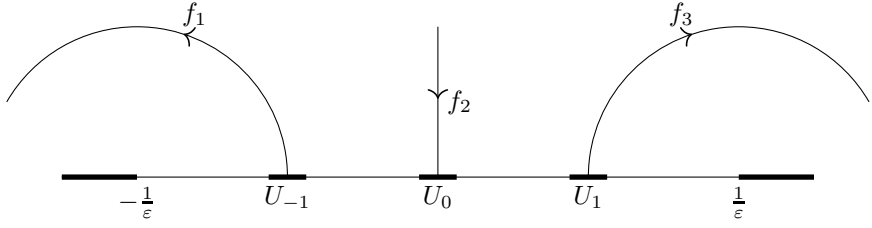

We now extend Example~\ref{f0} to any $N\geq 3$. 
\begin{definition}\label{p}
We define $\mathcal{P}$ to be the set of $N$-tuples that are elementary, inverse-free, do not contain elliptic transformations, but are not semidiscrete.
\end{definition}
First, let us establish that $\mathcal{P}$ is non-empty for all $N\geq3$. If $N=3$, then the triplet $\F_0$ in Example~\ref{f0} lies in $\mathcal{P}$. Suppose that $N>3$. Consider the transformations $g_1(z)=2z+1$, $g_2(z)=\tfrac{1}{3}z$ and $g_3(z)=5z-4$ that were introduced in Example~\ref{f0}. Let $\F=(g_1,g_2,g_3,g_4,\dots,g_N)$, where $g_i$ is a hyperbolic transformation that fixes the point at infinity and does not fix any of the points $-1,0$ and 1, for all $i=4,5,\dots, N$. Hence, for all $i=4,5,\dots,N$, $g_i$ is of the form $g_i(z)=\lambda_i z+\kappa_i$, for some $\lambda_i>0$ and $\kappa_i\in\mathbb{R}$. We also choose $\lambda_i$ to be either $p$ or $\tfrac{1}{p}$, where $p$ is a prime number different from $2,3$ and $5$. So it is easy to see that all elements in $\SF$ are hyperbolic transformations that fix the point infinity, and thus $\F$ is elementary and inverse-free. In addition, because $\langle \F_0 \rangle\subset \SF$, the $\F$ is not semidiscrete. We conclude that the $N$-tuple $\F$ lies in $\mathcal{P}$.

Observe that $\mathcal{P}$ does not intersect the elliptic locus. We now provide the following description for the complement of the semidiscrete and inverse-free locus. Note that because $\HY\subset \SD$, Theorem~\ref{sdc} answers Question~\ref{Q} in the negative for any $N\geq 3$.

\begin{theorem}\label{sdc}
For all $N\geq 3$, we have that $\left(\SDP\right)^c=\EL\cup \overline{H_P}\cup \mathcal{P}$ in $\PSL^N$.
\end{theorem}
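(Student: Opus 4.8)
The plan is to rewrite the complement set-theoretically and then analyse the semigroups generated by the remaining tuples. Since $\SDP=\SD\cap\overline{H_P}^{\,c}$, De Morgan's law gives
\[
\left(\SDP\right)^c=\SD^c\cup\overline{H_P},
\]
so it suffices to prove $\SD^c\cup\overline{H_P}=\EL\cup\overline{H_P}\cup\mathcal{P}$. The inclusion $\supseteq$ is the easy half: $\overline{H_P}$ is common to both sides, every $\F\in\EL$ satisfies $\mathrm{Id}\in\overline{\SF}$ (an elliptic of finite order has a power equal to the identity, one of infinite order has powers accumulating at it, and the identity case is immediate), and every $\F\in\mathcal{P}$ is by definition not semidiscrete; in all cases $\mathrm{Id}\in\overline{\SF}$, so $\F\in\SD^c$.

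For the reverse inclusion I would take $\F\in\SD^c$, i.e. $\mathrm{Id}\in\overline{\SF}$, and show $\F\in\EL\cup\overline{H_P}\cup\mathcal{P}$. If $\SF$ contains the identity or an elliptic we are done, so assume not; then $\SF$ is inverse-free, contains no elliptic, and is not semidiscrete. If $\F$ is elementary then it satisfies exactly the defining conditions of $\mathcal{P}$, so $\F\in\mathcal{P}$. The whole content of the theorem therefore reduces to the non-elementary case: I must show that a non-elementary, inverse-free $\F$ whose semigroup has no elliptics but is not semidiscrete lies in $\EL\cup\overline{H_P}$.

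To treat this case I would work with the closed semigroup $T=\overline{\SF}$, which is non-elementary and contains $\mathrm{Id}$ (as $\F$ is not semidiscrete, there are $f_n\in\SF\setminus\{\mathrm{Id}\}$ with $f_n\to\mathrm{Id}$). First, if $T$ contained an elliptic transformation then, the set $\{\,h:\lvert\mathrm{tr}(h)\rvert<2\,\}$ being open, some element of $\SF$ close to it would also be elliptic, forcing $\F\in\EL$; so I may assume $T$ has no elliptics. I then invoke the structural dichotomy for non-elementary, elliptic-free closed subsemigroups of $\PSL$: such a $T$ is either a group, or it maps some nonempty proper closed subset of $\overline{\mathbb{R}}$ into itself. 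In the group case $\mathrm{Id}$ is an accumulation point of $T$, so $T$ is a non-discrete non-elementary subgroup of $\PSL$ and hence dense; but $\PSL$ contains elliptics, contradicting that $T$ has none. In the compression case, if the ordinates of $\F$ map a single interval $J$ into itself then $\SF\subseteq\mathcal{M}(J)$ and Lemma~\ref{mj} gives $\F\in\overline{H_P}$; if instead they map a union of at least two intervals strictly inside itself then $\F$ has finite rank, hence is semidiscrete by Theorem~\ref{JSmain}, contradicting that $\F$ is not semidiscrete. The presence of $\mathrm{Id}$ in $T$ is exactly what forbids a \emph{strict} compression of a single interval and collapses the remaining configurations onto these two. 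Thus the non-elementary case always lands in $\EL\cup\overline{H_P}$, completing the proof.

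The main obstacle is precisely the structural dichotomy in the last step: controlling a non-elementary semigroup that accumulates at the identity yet contains no elliptics, and showing that the absence of elliptics forces either invertibility (a group, which must then be dense and so does contain elliptics, a contradiction) or a compression structure, before separating the single-interval compression (which is $\overline{H_P}$ by Lemma~\ref{mj}) from the multi-interval one (which is finite rank, hence semidiscrete by Theorem~\ref{JSmain}, and therefore excluded here). I expect this to be where the Jacques--Short machinery---the forward and backward limit sets, the cores, and the group criterion of Theorem~\ref{JSgroup}---must be deployed with care, since the results available in the present paper are tailored to semidiscrete semigroups, whereas the critical tuples in this final case are exactly the non-semidiscrete ones.
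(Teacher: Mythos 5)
Your set-theoretic reduction, the easy inclusion, and the observation that everything comes down to a non-elementary, inverse-free, elliptic-free, non-semidiscrete $\F$ all match the paper's argument. The gap is exactly where you say you expect trouble: the ``structural dichotomy'' for non-elementary, elliptic-free closed subsemigroups (group versus compression of a proper closed subset of $\overline{\mathbb{R}}$) is not a result available in the paper, you do not prove it, and as stated it does not do the work you need. Even granting that $T=\overline{\SF}$ preserves some nonempty proper closed subset of $\overline{\mathbb{R}}$ (for instance its forward limit set), such a set is in general a Cantor-type set, not a single interval and not a finite union of open intervals with disjoint closures mapped \emph{strictly} inside itself; so your case split into ``$\SF\subseteq\mathcal{M}(J)$'' versus ``finite rank'' does not exhaust the compression case, and the sentence about $\mathrm{Id}\in T$ ``collapsing the remaining configurations'' is an assertion, not an argument.

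The paper closes precisely this gap by invoking the Jacques--Short classification, stated as Theorem~\ref{class}: every finitely generated semigroup is elementary, or semidiscrete, or contained in $\mathcal{M}(J)$ for some interval $J$, or dense in $\PSL$. With that in hand your final case is immediate by elimination: $\F$ is not semidiscrete by hypothesis; if $\SF\subseteq\mathcal{M}(J)$ then Lemma~\ref{mj} puts $\F$ in $\overline{H_P}$; if $\SF$ is dense then, the elliptic elements forming an open set, $\SF$ contains an elliptic and $\F\in\EL$; and the elementary case lands in $\mathcal{P}$ as you already showed. (The paper phrases this as: assume $\F\notin\EL\cup\overline{H_P}$ and conclude $\F$ must be elementary, hence in $\mathcal{P}$ --- logically the same elimination.) If you replace your dichotomy by a citation of Theorem~\ref{class}, your proof becomes the paper's proof; as written, the dense-group subargument (a non-discrete non-elementary subgroup of $\PSL$ is dense) is a correct standard fact but the compression subargument does not go through.
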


Observe that $\EL\cup \overline{H_P}\cup \mathcal{P}$ is not a disjoint union (for example $\EL\cap \overline{H_P}\neq \emptyset$), but due to Theorem~\ref{cl} it is an open subset of $\PSL^N$. In order to prove Theorem~\ref{sdc} we require the following classification of finitely-generated semigroups due to Jacques and Short \cite[Theorem14.1]{JS} (recall Definition~\ref{mjd}).

\begin{theorem}[\cite{JS}]\label{class}
Let $\F$ be a $N$-tuple in $\PSL^N$. Then either 
\begin{enumerate}
\item $\F$ is elementary;
\item $\F$ is semidiscrete;
\item $\langle \F \rangle$ is contained in $\mathcal{M}(J)$, for some $J$; or
\item $\langle \F \rangle$ dense in $\PSL$.
\end{enumerate}
\end{theorem}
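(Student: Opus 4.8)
The plan is to prove the claimed set equality by establishing two inclusions, after first rewriting the complement in a more tractable form. Since $\SDP=\SD\cap(\overline{H_P})^c$, taking complements gives
\[
(\SDP)^c=\SD^c\cup\overline{H_P}.
\]
Because $\overline{H_P}$ already appears on both sides of the claimed identity, it suffices to prove the two inclusions $\SD^c\subseteq\EL\cup\mathcal{P}\cup\overline{H_P}$ and $\EL\cup\mathcal{P}\subseteq\SD^c$. Throughout I would use the observation recorded after the definition of semidiscreteness, that $\F\in\SD$ if and only if $\mathrm{Id}\notin\overline{\SF}$, so that $\F\in\SD^c$ precisely when $\mathrm{Id}\in\overline{\SF}$, together with the fact that $\F$ is inverse-free if and only if $\mathrm{Id}\notin\SF$.

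For the easy inclusion $\EL\cup\mathcal{P}\subseteq\SD^c$ I would argue directly from the definitions. If $\F\in\mathcal{P}$ then $\F$ is not semidiscrete by definition, hence $\F\notin\SD$. If $\F\in\EL$ then $\SF$ contains either the identity or an elliptic transformation; in the first case $\mathrm{Id}\in\SF\subseteq\overline{\SF}$, while in the second case the powers of an infinite-order elliptic accumulate at $\mathrm{Id}$ and a finite-order elliptic has a power equal to $\mathrm{Id}$, so in either case $\mathrm{Id}\in\overline{\SF}$. Thus $\F\in\SD^c$ in all cases.

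The substantial inclusion is $\SD^c\subseteq\EL\cup\mathcal{P}\cup\overline{H_P}$, and here the engine is the classification in Theorem~\ref{class}. Given $\F\in\SD^c$, so that $\mathrm{Id}\in\overline{\SF}$, I would run through its four alternatives. If $\SF$ is contained in some $\mathcal{M}(J)$ (alternative (3)), then Lemma~\ref{mj} places $\F$ in $\overline{H_P}$. If $\SF$ is dense in $\PSL$ (alternative (4)), then since the elliptic transformations form a nonempty open subset of $\PSL$ (the condition $\lvert\mathrm{tr}\rvert<2$ being open), the dense semigroup $\SF$ meets it, so $\SF$ contains an elliptic transformation and $\F\in\EL$. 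If $\F$ is semidiscrete (alternative (2)), then $\mathrm{Id}$ is not an accumulation point of $\SF$; combined with $\mathrm{Id}\in\overline{\SF}$ this forces $\mathrm{Id}\in\SF$, whence $\F\in\EL$. Finally, if $\F$ is elementary (alternative (1)), I split into two subcases: either $\SF$ contains an elliptic or the identity, giving $\F\in\EL$ directly; or it contains neither, in which case $\mathrm{Id}\notin\SF$ makes $\F$ inverse-free, while $\mathrm{Id}\in\overline{\SF}\setminus\SF$ makes $\mathrm{Id}$ an accumulation point so that $\F$ fails to be semidiscrete, and together with elementarity and the absence of elliptics this is exactly the definition of $\mathcal{P}$, so $\F\in\mathcal{P}$. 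Since Theorem~\ref{class} guarantees that every $\F$ satisfies at least one alternative, these cases exhaust $\SD^c$.

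I expect the main conceptual obstacle to be the elementary case (1): unlike the other three alternatives, each of which lands cleanly in a single target set, the elementary $N$-tuples genuinely split between $\EL$ (those whose semigroup sees an elliptic or the identity) and $\mathcal{P}$ (the inverse-free, non-semidiscrete, elliptic-free ones), and the crux is verifying that this dichotomy is matched precisely by the definition of $\mathcal{P}$ together with the characterisation of $\SD^c$. A secondary genuine check is the density case, where membership in $\EL$ requires an elliptic element inside $\SF$ itself, not merely in $\overline{\SF}$; this is exactly what the openness of the set of elliptic transformations supplies.
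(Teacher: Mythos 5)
Your proposal does not prove the statement it was assigned. The statement is Theorem~\ref{class} itself---the Jacques--Short classification asserting that every $N$-tuple in $\PSL^N$ is elementary, or semidiscrete, or generates a semigroup contained in some $\mathcal{M}(J)$, or generates a semigroup dense in $\PSL$. What you have written is instead a proof of Theorem~\ref{sdc}, the identity $\left(\SDP\right)^c=\EL\cup \overline{H_P}\cup \mathcal{P}$, and your argument explicitly invokes Theorem~\ref{class} as ``the engine''. As a proof of Theorem~\ref{class} this is circular: you take the four-way classification as a black box and derive a consequence of it, and nowhere do you address the actual content, namely why an arbitrary finitely generated semigroup of M\"obius transformations that is neither elementary nor semidiscrete must either preserve an open interval or be dense in $\PSL$. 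That dichotomy is the genuinely hard part; the paper does not prove it at all but cites it from \cite[Theorem 14.1]{JS}, where the proof rests on the semigroup machinery developed there (forward and backward limit sets, the structure of non-elementary, non-semidiscrete semigroups, and an argument that the closure of such a semigroup either stabilises an interval or is everything). None of that appears in your write-up, so with respect to the assigned statement there is no progress to assess.

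For what it is worth, read as a proof of Theorem~\ref{sdc} your argument is essentially the paper's own: the easy inclusion follows from the definitions of $\EL$, $H_P$ and $\mathcal{P}$; Lemma~\ref{mj} rules out alternative (3); openness of the set of elliptic transformations rules out (4); and the elementary case lands in $\mathcal{P}$ exactly as you describe. Your minor deviations are sound---you run through all four alternatives directly and dispose of the semidiscrete alternative by noting that $\mathrm{Id}\in\overline{\SF}$ together with semidiscreteness forces $\mathrm{Id}\in\SF$, whereas the paper instead invokes Corollary~\ref{sc} to conclude non-semidiscreteness before applying the classification. But this is a correct proof of the wrong theorem: to answer the question actually posed you would need to reconstruct the Jacques--Short classification itself, not deploy it.
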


We remark that the four classes in Theorem~\ref{class} are not all disjoint. For example, there exist elementary and semidiscrete $N$-tuples which generate semirgoups that lie in $\mathcal{M}(J)$, for some $J$. However, semidiscrete $N$-tuples cannot generate semigroups that are dense in $\PSL$ and vice versa.

\begin{proof}[Proof of Theorem~\ref{sdc}]
The inclusion $\EL\cup \overline{H_P}\cup \mathcal{P}\subseteq \left(\SDP\right)^c$ is obvious from the definitions of $\EL,H_P$ and $\mathcal{P}$. For the converse, let $\F$ be an $N$-tuple in $\left(\SDP\right)^c$, and suppose that $\F\notin \left(\EL\cup \overline{H_P}\right)$. Our task is to show that $\F$ lies in $\mathcal{P}$. Note that because $\F\notin \EL$, the semigroup $\SF$ is inverse-free and does not contain elliptic transformations. Corollary~\ref{sc} implies that $\left(\SD\cup \overline{H_P}\right)^c=\overline{\SD}^c$, and so the semigroup generated by $\F$ is not semidiscrete.  Hence, we only need to show that $\SF$ is elementary.\\
Observe that due to Lemma~\ref{mj}, the semigroup $\SF$ cannot be contained in $\mathcal{M}(J)$ because then it would have to lie in the closure of the principal component. Also, since the set of elliptic transformations of $\PSL$ is open, $\SF$ cannot be dense in $\PSL$ as then $\F$ would lie in $\EL$. Finally, from Theorem~\ref{class}, $\SF$ has to be elementary and thus $\F$ lies in $\mathcal{P}$, as required.
\end{proof}

\section{Questions and future directions}

Even though the introduction of the semidiscrete and inverse-free locus, and Theorem~\ref{cl}, provides us with a general framework with which to study the elusive boundary of the hyperbolic locus, it does introduce several questions about the structure of $\PSL^N$ in general.

The most important question arising from our work is the following updated version of Question~\ref{Q}.

\begin{question}\label{imp}
Is $\SDP$ equal to the closure of $\HY\setminus H_P$ in $\PSL^N$?
\end{question}

A positive answer to Question \ref{imp} would imply that Theorem~\ref{sdc} provides a complete characterisation of the complement of $\overline{\HY}$, thus finally settling \cite[Question 4]{AvBoYo2010} and \cite[Question 4]{Yo2004} of Avila, Bochi and Yoccoz. The inclusion $\overline{\HY\setminus H_P}\subseteq \SDP$ is obvious from the fact that $\HY\setminus H_P$ is contained in the closed set $\SDP$. Hence, the difficulty in Question~\ref{imp} lies in evaluating whether every $N$-tuple in $\SD$ that does not have rank one, lies in the closure of $\HY$.

A first step in answering Question~\ref{imp} would be to focus on $N$-tuples of finite rank. Due to Lemma~\ref{mj}, we only need to consider $N$-tuples that have rank strictly greater than one, since all others are contained in the closure of the principal component. So we ask the following:

\begin{question}\label{imp2}
Let $\F$ be an $N$-tuple that has finite rank greater than one. Does $\F$ lie in the closure of $\HY\setminus H_P$?
\end{question}

Recall Theorem~\ref{npsch}, which states that all $N$-tuples in the closure of a non-principal component have rank greater than one. This, together with Lemma~\ref{poi} seem to indicate that the answer to Question~\ref{imp2} is positive. In order to answer this question in the positive, one would have to perturb the ordinates of the generating $N$-tuple in such a way that the intersection points of the limit sets vanish.

What is important about Question~\ref{imp2} is that a positive answer would imply that points of $\SD$ that lie in the interior of $\overline{\EL}$, if those indeed exist, have to generate semigroups with ``wild" limit sets. However, a positive answer to Question~\ref{imp2} still does not provide an answer to Question~\ref{imp}. Instead, we ask the following:

\begin{question}\label{big}
Suppose that $\F$ is an $N$-tuple in $\SD$. Can we find a sequence $(\F_n)$ in $\SD$ such that $\F_n\to \F$, as $n\to \infty$, and $\F_n $ have finite rank?
\end{question}

Question~\ref{big} is equivalent to examining whether every semigroup generated by a semidiscrete and inverse-free $N$-tuple is the algebraic limit of semigroups generated by $N$-tuples of finite rank (see \cite[Chapter 4]{Ma2007} for the definition of algebraic convergence). A positive answer to Questions \ref{imp2} and \ref{big} would provide a positive answer to Question~\ref{imp}. Also, due to the fact that $\SDP$ is closed, a counterexample to Question~\ref{big} would have to be a semigroup with limit sets that intersect in a convoluted way, and a generating $N$-tuple that is isolated from the hyperbolic components of $\HY$. This would most likely mean that the $N$-tuple has to lie in the interior of $\overline{\EL}$, thus also providing a counterexample to Question~\ref{imp}.

\begin{bibdiv}
\begin{biblist}

\bib{AvBoYo2010}{article}{
   author={Avila, Artur},
   author={Bochi, Jairo},
   author={Yoccoz, Jean-Christophe},
   title={Uniformly hyperbolic finite-valued ${\rm SL}(2,\Bbb R)$-cocycles},
   journal={Comment. Math. Helv.},
   volume={85},
   date={2010},
   number={4},
   pages={813--884},
}

\bib{ASVW2013}{article}{
   author={Avila, Artur},
   author={Santamaria, Jimmy},
   author={Viana, Marcelo},
   author={Wilkinson, Amie},
   title={Cocycles over partially hyperbolic maps},
   journal={Ast\'{e}risque},
   number={358},
   date={2013},
   pages={1--12}
}

\bib{AvVi2010}{article}{
   author={Avila, Artur},
   author={Viana, Marcelo},
   title={Extremal Lyapunov exponents: an invariance principle and
   applications},
   journal={Invent. Math.},
   volume={181},
   date={2010},
   number={1},
   pages={115--189}
}

\bib{BaBeCa1996}{article}{
   author={B\'{a}r\'{a}ny, I.},
   author={Beardon, A. F.},
   author={Carne, T. K.},
   title={Barycentric subdivision of triangles and semigroups of M\"{o}bius
   maps},
   journal={Mathematika},
   volume={43},
   date={1996},
   number={1},
   pages={165--171}
}

\bib{BaKaKo2018}{article}{
   author={B\'{a}r\'{a}ny, Bal\'{a}zs},
   author={K\"{a}enm\"{a}ki, Antti},
   author={Koivusalo, Henna},
   title={Dimension of self-affine sets for fixed translation vectors},
   journal={J. Lond. Math. Soc. (2)},
   volume={98},
   date={2018},
   number={1},
   pages={223--252}
}

\bib{BaPe2007}{book}{
   author={Barreira, Luis},
   author={Pesin, Yakov},
   title={Nonuniform hyperbolicity},
   series={Encyclopedia of Mathematics and its Applications},
   volume={115},
   note={Dynamics of systems with nonzero Lyapunov exponents},
   publisher={Cambridge University Press, Cambridge},
   date={2007},
   pages={xiv+513}
}

\bib{Be1995}{book}{
   author={Beardon, Alan F.},
   title={The geometry of discrete groups},
   series={Graduate Texts in Mathematics},
   volume={91},
   note={Corrected reprint of the 1983 original},
   publisher={Springer-Verlag, New York},
   date={1995},
   pages={xii+337},
}

\bib{BeMi2007}{article}{
   author={Beardon, A. F.},
   author={Minda, D.},
   title={The hyperbolic metric and geometric function theory},
   conference={
      title={Quasiconformal mappings and their applications},
   },
   book={
      publisher={Narosa, New Delhi},
   },
   date={2007},
   pages={9--56},
}

\bib{BoGa2019}{article}{
   author={Bochi, Jairo},
   author={Garibaldi, Eduardo},
   title={Extremal norms for fiber-bunched cocycles},
   journal={J. \'{E}c. polytech. Math.},
   volume={6},
   date={2019},
   pages={947--1004}
}

\bib{BoGo2009}{article}{
   author={Bochi, Jairo},
   author={Gourmelon, Nicolas},
   title={Some characterizations of domination},
   journal={Math. Z.},
   volume={263},
   date={2009},
   number={1},
   pages={221--231}
   }

\bib{BoMo2015}{article}{
   author={Bochi, Jairo},
   author={Morris, Ian D.},
   title={Continuity properties of the lower spectral radius},
   journal={Proc. Lond. Math. Soc. (3)},
   volume={110},
   date={2015},
   number={2},
   pages={477--509}
}

\bib{BoPoSa2019}{article}{
   author={Bochi, Jairo},
   author={Potrie, Rafael},
   author={Sambarino, Andr\'{e}s},
   title={Anosov representations and dominated splittings},
   journal={J. Eur. Math. Soc. (JEMS)},
   volume={21},
   date={2019},
   number={11},
   pages={3343--3414}
}

\bib{Fo2006}{article}{
   author={Forni, Giovanni},
   title={On the Lyapunov exponents of the Kontsevich-Zorich cocycle},
   conference={
      title={Handbook of dynamical systems. Vol. 1B},
   },
   book={
      publisher={Elsevier B. V., Amsterdam},
   },
   date={2006},
   pages={549--580}
}

\bib{FrMaSt2012}{article}{
   author={Fried, David},
   author={Marotta, Sebastian M.},
   author={Stankewitz, Rich},
   title={Complex dynamics of M\"obius semigroups},
   journal={Ergodic Theory Dynam. Systems},
   volume={32},
   date={2012},
   number={6},
   pages={1889--1929},
   }

\bib{Gu1995}{article}{
   author={Gurvits, Leonid},
   title={Stability of discrete linear inclusion},
   journal={Linear Algebra Appl.},
   volume={231},
   date={1995}
}

\bib{HiMa1996}{article}{
   author={Hinkkanen, A.},
   author={Martin, G. J.},
   title={The dynamics of semigroups of rational functions. I},
   journal={Proc. London Math. Soc. (3)},
   volume={73},
   date={1996},
   number={2},
   pages={358--384},
}

\bib{JS}{article}{
   author={Jacques, Matthew},
   author={Short, Ian},
   title={Dynamics of hyperbolic isometries},
   eprint={https://arxiv.org/abs/1609.00576v4}
}

\bib{Ju2012}{article}{
   author={Jungers, Rapha\"{e}l M.},
   title={On asymptotic properties of matrix semigroups with an invariant
   cone},
   journal={Linear Algebra Appl.},
   volume={437},
   date={2012},
   number={5},
   pages={1205--1214}
}

\bib{Ka2011}{article}{
   author={Kalinin, Boris},
   title={Liv\v{s}ic theorem for matrix cocycles},
   journal={Ann. of Math. (2)},
   volume={173},
   date={2011},
   number={2},
   pages={1025--1042}
}

\bib{Ka1992}{book}{
   author={Katok, Svetlana},
   title={Fuchsian groups},
   series={Chicago Lectures in Mathematics},
   publisher={University of Chicago Press, Chicago, IL},
   date={1992},
   pages={x+175}
}

\bib{Ma2007}{book}{
   author={Marden, A.},
   title={Outer circles},
   note={An introduction to hyperbolic 3-manifolds},
   publisher={Cambridge University Press, Cambridge},
   date={2007},
   pages={xviii+427}
}

\bib{Vi2008}{article}{
   author={Viana, Marcelo},
   title={Almost all cocycles over any hyperbolic system have nonvanishing
   Lyapunov exponents},
   journal={Ann. of Math. (2)},
   volume={167},
   date={2008},
   number={2},
   pages={643--680}
}

\bib{Vi2014}{book}{
   author={Viana, Marcelo},
   title={Lectures on Lyapunov exponents},
   series={Cambridge Studies in Advanced Mathematics},
   volume={145},
   publisher={Cambridge University Press, Cambridge},
   date={2014},
   pages={xiv+202}
}

\bib{Yo2004}{article}{
   author={Yoccoz, Jean-Christophe},
   title={Some questions and remarks about ${\rm SL}(2,\bold R)$ cocycles},
   conference={
      title={Modern dynamical systems and applications},
   },
   book={
      publisher={Cambridge Univ. Press, Cambridge},
   },
   date={2004},
   pages={447--458},
}

\end{biblist}
\end{bibdiv}
\end{document}